\newtheorem{thm}{Theorem}[section]
\newtheorem{prop}[thm]{Proposition}
\newtheorem{lem}[thm]{Lemma}
\newtheorem{cor}[thm]{Corollary}
\newtheorem{asm}{Assumption}
\theoremstyle{remark}
\newtheorem{rem}[thm]{Remark}
\theoremstyle{definition}
\newtheorem{defn}{Definition}
\newcommand{\ra}{\rightarrow}
\newcommand{\Lra}{\Longrightarrow}
\newcommand{\R}{\mathbb R}     
\newcommand{\Z}{\mathbb Z}     
\renewcommand{\b}{\beta}
\renewcommand{\d}{\delta}
\newcommand{\e}{\varepsilon}
\newcommand{\del}{\partial}
\renewcommand{\l}{\lambda}
\renewcommand{\L}{\Lambda}
\newcommand{\s}{\sigma}
\renewcommand{\k}{\kappa}
\newcommand{\bigo}{\mathcal{O}}
\newcommand{\fl}[1]{\lfloor #1 \rfloor}  
\newcommand{\cl}[1]{\lceil  #1 \rceil} 	 
\newcommand{\ind}[1]{ \mathbf{1}_{ \{ #1 \} } } 
\newcommand{\indd}[1]{ \mathbf{1}\{ #1 \} }
\newcommand{\be}{\begin{equation}}
\newcommand{\ee}{\end{equation}}
\DeclareMathOperator{\Var}{Var}   \DeclareMathOperator{\Cov}{Cov}  
\def\Pv{\mathbf{P}}  \def\Ev{\mathbf{E}}   
\newcommand{\T}{\mathbb{T}} 	
\newcommand{\tT}{\tilde{\T}}	
\renewcommand{\S}{\mathbb{S}}	
\newcommand{\U}{\mathbb{U}}	
\renewcommand{\H}{\mathcal{H}}	
\newcommand{\V}{\mathbb{V}}	
\newcommand{\tc}{C_0}		
\newcommand{\lm}{\ell}		
\def\one{{\bf 1}}
\def\cadlag{c\`adl\`ag}
\newcommand{\w}{\omega}              
\renewcommand{\P}{\mathbb{P}}        
\newcommand{\E}{\mathbb{E}}          
\newcommand{\vp}{\mathrm{v}_P}       
\begin{document}

\title[Weak quenched limits for paths]{Weak weak quenched limits for
  the path-valued processes of hitting times and positions of a
  transient, one-dimensional random  walk in a random environment}
\author{Jonathon Peterson}
\address{Jonathon Peterson \\  Purdue University \\ Department of Mathematics \\ 150 N University Street \\ West Lafayette, IN  47907 \\ USA}
\email{peterson@math.purdue.edu}
\urladdr{http://www.math.purdue.edu/~peterson}
\thanks{J. Peterson was partially supported by National Science Foundation grant DMS-0802942.}

\author{Gennady Samorodnitsky}
\address{Gennady Samorodnitsky \\ Cornell University \\ School of Operations Research and Information Engineering \\ Ithaca, NY 14853 \\ USA}
\email{gs18@cornell.edu}
\urladdr{http://legacy.orie.cornell.edu/~gennady/}
\thanks{G. Samorodnitsky was partially supported by ARO grant
  W911NF-10-1-0289, NSF grant DMS-1005903 and NSA grant
H98230-11-1-0154  at Cornell University}

\subjclass[2000]{Primary 60K37; Secondary 60F05, 60G55}
\keywords{Weak quenched limits, point process, heavy tails, random
  probability measure, probability-valued \cadlag\ functions}

\date{\today}

\begin{abstract}
In this article we continue the study of the quenched distributions of
transient, one-dimensional random walks in a random environment. In a
previous article we showed that while the quenched distributions of
the hitting times do not converge to any deterministic distribution,
they do have a weak weak limit in the sense that - viewed as random elements of the space of probability measures - they converge in distribution to a certain random probability measure (we refer to this as a weak weak limit because it is a weak limit in the weak topology). 
Here, we improve this result to the path-valued process of hitting
times. As a consequence, we are able to also prove a weak weak quenched
limit theorem for the path  of the random walk itself. 
\end{abstract}

\maketitle

\section{Introduction and Notation}

A random walk in a random environment (RWRE) is a very simple model
for random motion in a  non-homogeneous random medium. A
nearest-neighbor RWRE on $\Z$ may  be described as follows. 
Elements of the set $\Omega = [0,1]^\Z$ are called \emph{environments}
since they can be used to  define the transition probabilities for a
Markov chain. That is, for any $\w =  \{\w_x \}_{x \in \Z} \in \Omega$
and any $z \in \Z$, let $X_n$ be  a Markov chain with law $P_\w^z$
given by $P_\w^z(X_0 = z) = 1$  and
\[
 P_\w^z( X_{n+1} = y \, | \, X_n = x) =
\begin{cases}
 \w_x & \text{if } y=x+1\\
 1-\w_x & \text{if } y=x-1\\
 0 & \text{otherwise}. 
\end{cases}
\]

Let $\Omega$ be endowed with the natural cylindrical $\s$-field, and
let $P$ be a probability measure on $\Omega$. Then, if $\w$ is a
random environment with distribution $P$, then $P_\w^z$  is a random
probability measure and is  called the \emph{quenched} law of the
RWRE. By averaging over all  environments we obtain the
\emph{averaged} law of the RWRE 
\[
 \P^z(\cdot) = \int_\Omega P^z_\w(\cdot) P(d\w). 
\]
For ease of notation, the quenched and averaged laws of the RWRE
started at $z=0$ will be denoted  by $\P_\w$ and $\P$, respectively. 
Expectations with respect to $P$, $P_\w$ and $\P$ will be denoted by
$E_P$, $E_\w$ and $\E$,  respectively. 

Throughout this paper we will make the following assumptions on the
distribution $P$ on  environments. 
\begin{asm}\label{iidasm}
 The environments are i.i.d. That is, $\{\w_x\}_{x\in\Z}$ is an
 i.i.d.\ sequence of random variables under the measure $P$. 
\end{asm}
\begin{asm}\label{tasm}
 The expectation $E_P[ \log \rho_0 ]$ is well defined and 
$E_P[ \log \rho_0 ]< 0$. Here $\rho_i = \rho_i(\w) =
 \frac{1-\w_i}{\w_i}$, for all $i\in\Z$.  
\end{asm}
\begin{asm}\label{kasm}
 The distribution of $\log \rho_0$ is non-lattice under $P$, and there exists a $\kappa > 0$ such that $E_P[ \rho_0^\kappa ]= 1$ and $E_P[ \rho_0^\k \log \rho_0 ] < \infty$. 
\end{asm}
From Solomon's seminal paper on RWRE \cite{sRWRE}, it is well known that Assumptions \ref{iidasm} and \ref{tasm} imply that the RWRE is transient to $+\infty$; that is, $\P(\lim_{n\ra\infty} X_n = \infty) = 1$.
Moreover, Solomon showed that there exists a law of large numbers in the sense that there exists a constant $\vp$ such that $\lim_{n\ra\infty} X_n/n = \vp$, $\P$-a.s. 
Solomon also showed that the limiting velocity $\vp$ is non-zero only if $E_P[ \rho_0 ] < 1$, which is equivalent to $\k> 1$ when Assumption \ref{kasm} is in effect as well. 
Assumption \ref{kasm} was used by Kesten, Kozlov, and Spitzer in their analysis of the averaged limiting distributions for transient one-dimensional RWRE \cite{kksStable}. The parameter $\k$ in Assumption \ref{kasm} determines the magnitude of centering and scaling as well as the type of distribution obtained in the limit.
Define the hitting times of the RWRE by 
\[
 T_x = \inf\{ n\geq 0\, : \, X_n = x \}, \quad x \in \Z,
\]
and for $\k\in(0,2)$ define the properly centered and scaled versions of the hitting times and location of the RWRE by 
\be\label{acs}
 \mathfrak{t}_n = 
\begin{cases}
 \frac{T_n}{n^{1/\k}} & \k \in (0,1) \\
 \frac{T_n - n D(n)}{n} & \k = 1 \\
 \frac{T_n - n/\vp}{n^{1/\k}} & \k \in (1,2)
\end{cases}
\quad\text{and}\quad 
 \mathfrak{z}_n = 
\begin{cases}
 \frac{X_n}{n^{\k}} & \k \in (0,1) \\
 \frac{X_n - \d(n)}{n/(A \log n)^2} & \k = 1 \\
 \frac{X_n - n \vp}{\vp^{1+1/\k} n^{1/\k}} & \k \in (1,2), 
\end{cases}
\ee
where in the case $\k=1$, $A>0$ is a certain constant, and $D(n)$ and
$\d(n)$ are certain functions  satisfying $D(n) \sim A \log n$ and
$\d(n) \sim n/(A \log n)$,  respectively. 
Also, let $L_{\k,b}$ denote the distribution function of a totally
skewed to the right stable random variable with index $\k \in (0,2)$,
scaling parameter $b>0$, and  zero shift; see \cite{stStable}. The
following averaged limiting  distribution for RWRE was first proved in
\cite{kksStable}.  

\begin{thm}\label{averagedlimlaw}
 Let Assumptions \ref{iidasm} - \ref{kasm} hold, and let $\k\in(0,2)$. 
Then, there exists a constant $b>0$ such that for any $x\in\R$, 
\[
 \lim_{n\ra\infty} \P(\mathfrak{t}_n \leq x) = L_{\k,b}(x), \
 x\in\R\,,
\]
and 
\[
 \lim_{n\ra\infty} \P(\mathfrak{z}_n \leq x) = 
\begin{cases}
 1 - L_{\k,b}(x^{-1/\k}) & \text{for} \ x>0 \ \text{if} \ \k \in (0,1) \\
 1 - L_{\k,b}(-x) & \text{for} \ x\in\R \ \text{if} \ \k \in [1,2). 
\end{cases}
\]
\end{thm}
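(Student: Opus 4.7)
The plan is to implement the classical argument of Kesten, Kozlov, and Spitzer. The starting point is the decomposition $T_n = \sum_{i=1}^n \tau_i$ with $\tau_i := T_i - T_{i-1}$. Under the averaged measure $\P$, the sequence $(\tau_i)_{i\ge 1}$ is stationary (by stationarity of $P$) and ergodic (by Assumption \ref{iidasm}), but not independent. The core of the argument is to show that $\tau_1$ has a regularly varying right tail of index $\kappa$, and then to push a stable limit theorem through the environmental dependence.

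The tail analysis begins with the quenched mean. Since the walk is transient to $+\infty$, a first-step analysis gives
\[
E_\w[T_1] \;=\; 1 + 2W, \qquad W \;:=\; \sum_{j\ge 1}\prod_{i=1}^{j}\rho_{-i},
\]
and $W$ satisfies the distributional fixed-point equation $W \eid \rho_{-1}(1+W')$ with $W'$ an independent copy. Under Assumption \ref{kasm}, Kesten's renewal theorem for such affine random recursions yields $P(W>x) \sim C_0 x^{-\kappa}$ as $x \to \infty$ for an explicit constant $C_0 > 0$; the conditions $E_P[\rho_0^\kappa]=1$, $E_P[\rho_0^\kappa \log \rho_0]<\infty$, and the non-lattice assumption on $\log \rho_0$ are precisely those needed by Kesten's theorem. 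A concentration estimate — most naturally phrased via the branching-process-in-random-environment representation of $\tau_1$, where a large value of $E_\w[\tau_1]$ arises from an atypical long excursion whose total progeny concentrates around its conditional mean — transfers this tail to $\tau_1$ itself, giving $\P(\tau_1>x)\sim c_0 x^{-\kappa}$ for some $c_0>0$.

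Next I would apply a stable limit theorem to the stationary sequence $(\tau_i)$. The dependence is mediated only through the environment and decays fast enough that a truncation-and-Poisson-approximation argument suffices: truncate $\tau_i$ at an $n$-dependent level, control the bulk by an $L^2$ estimate (for $\kappa \in (1,2)$) or a direct first-moment bound (for $\kappa \in (0,1)$), and show that the extremes converge to the appropriate Poisson point process on $(0,\infty)$ with intensity $c_0 \kappa x^{-\kappa-1}\,dx$. For $\kappa \in (0,1)$, $\E[\tau_1]=\infty$ and no centering is required; for $\kappa \in (1,2)$, $\E[\tau_1] = 1/\vp < \infty$ and centering by $n/\vp$ is natural; for $\kappa=1$, the truncated mean grows like $A\log n$, producing the centering $n D(n)$ with $D(n) \sim A \log n$. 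This last case is technically the most delicate, since the logarithmic centering must be matched exactly to the slowly varying contribution of the large jumps.

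Finally, the limit for $X_n$ follows by inverting the hitting-time limit. The duality $\{X_n \ge k\} = \{T_k \le n\}$, which holds exactly for the running maximum of the walk and up to a tight remainder for $X_n$ itself by transience, gives the result immediately for $\kappa \in (0,1)$: inverting the $n^{1/\kappa}$ scaling of a positive stable law produces $1-L_{\kappa,b}(x^{-1/\kappa})$ for $x>0$. For $\kappa \in [1,2)$, a first-order expansion around $T_n \approx n/\vp$ converts positive fluctuations of $T_n$ into negative fluctuations of $X_n$ and gives $X_n - n\vp \approx -\vp^{1+1/\kappa} n^{1/\kappa} Z$, accounting both for the prefactor $\vp^{1+1/\kappa}$ in the scaling and for the reflection $x \mapsto -x$ in the stated limit. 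The principal technical obstacles are the Kesten tail estimate and the handling of the $\kappa=1$ centering.
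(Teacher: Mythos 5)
The paper does not prove this theorem at all: it is quoted verbatim from Kesten, Kozlov and Spitzer \cite{kksStable}, so there is no internal proof to compare yours against. Your outline is a faithful reconstruction of the standard argument, and in fact it follows the modern route (quenched expectation of the crossing time, Kesten's renewal theorem for the affine recursion, then transfer of the tail to the crossing time itself) used by Enriquez--Sabot--Zindy and Peterson--Zeitouni, rather than the original KKS branching-process-with-immigration computation; this is also exactly the source of the tail asymptotic \eqref{btail} that the present paper imports from \cite{pzSL1}. Two points deserve care. First, a small indexing slip: with $W=\sum_{j\ge 1}\prod_{i=1}^{j}\rho_{-i}$ one has $E_\w[T_1]=1+2\rho_0(1+W)$, i.e.\ the correct formula is $E_\w[T_1]=1+2W_0$ with $W_0=\sum_{k\le 0}\prod_{l=k}^{0}\rho_l$; this changes nothing about the tail exponent but your fixed-point variable should be the one including $\rho_0$. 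Second, and more substantively, the step ``the dependence is mediated only through the environment and decays fast enough'' is where all the work lives: each $\tau_i$ depends on the entire environment to the left of $i$, so the $(\tau_i)$ are not even approximately $m$-dependent as written. Every known implementation of your plan localizes the dependence by grouping the crossing times into blocks between the ladder locations $\nu_k$ of \eqref{nudef} (equivalently, by a regeneration structure), works under the conditioned measure $Q$ to restore stationarity of the blocks, and only then runs the truncation-and-Poisson-approximation scheme — precisely the machinery that Sections 2, 4 and 5 of this paper deploy for the functional refinement. Your plan is correct in architecture, but it should acknowledge that block/regeneration step explicitly; without it the Poisson approximation for the extremes and the $L^2$ bound for the bulk cannot be justified.
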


\begin{rem}
 The cases $\k=2$ and $\k > 2$ were also considered in \cite{kksStable}, but since our main results are for $\k \in (0,2)$ we will limit our focus to these cases. We note, however, that when $\k\geq 2$ the averaged limiting distributions for the hitting times and the location of the RWRE are Gaussian. 
\end{rem}

It is important to note that the limiting distributions in Theorem \ref{averagedlimlaw} are for the averaged measure $\P$. However, for certain applications the quenched measure $P_\w$ may be more applicable (e.g., for repeated experiments in a fixed non-homogeneous medium), and one naturally wonders if there is a quenched analog of Theorem \ref{averagedlimlaw}. 
Unfortunately, it was shown in \cite{pzSL1} and \cite{p1LSL2} that
there is no such strong quenched limiting distribution. 
That is, for almost every fixed environment $\w$, there is no
centering and scaling (or even environment-dependent centering and
scaling) for  which the hitting times or location of the RWRE converge
in distribution under $P_\w$. 

The negative results of \cite{pzSL1} and \cite{p1LSL2} were recently
clarified by showing that quenched limiting distributions do exist in
a weak sense (\cite{psWQLTn, dgWQLTn, estzWQLTn}). Let
$\mathcal{M}_1(\R)$ be the  space of probability measures on $\R$
equipped with the  topology of convergence in distribution. Then,
since the environment $\w$ is a random variable, the quenched
distribution $\mu_{n,\w} = P_\w(\mathfrak{t}_n \in \cdot)$ is an
$\mathcal{M}_1(\R)$-valued function of that random variable. This
function can be easily shown to be measurable, hence $\mu_{n,\w}$ is
itself a random variable, namely  a $\mathcal{M}_1(\R)$-valued random
variable.  
It was shown in \cite{psWQLTn}  that there exists a family of
$\mathcal{M}_1(\R)$-valued random variables $(\pi_{\l,\k})$ such that
$\mu_{n,\w} \Lra  \pi_{\l,\k}$ for some $\l>0$, where $\Lra$
denotes weak  convergence of $\mathcal{M}_1(\R)$-valued random variables\footnote{Throughout the paper, if $(Z_n),Z$ are random variables in some space $\Psi$, then $Z_n \Lra Z$ will denote weak convergence (i.e., convergence in distribution) of $\Psi$-valued random variables.}. 
We will refer to such limits as \emph{weak weak quenched limits} since the quenched distribution converges weakly with respect to the weak topology on $\mathcal{M}_1(\R)$. 
Similar results were obtained independently in \cite{dgWQLTn} and
\cite{estzWQLTn}.  

In \cite{psWQLTn}, this weak weak quenched limiting distribution for
the hitting times  was also used to obtain a result on the quenched
distribution of the location of the RWRE. It was shown that 
\be\label{WQLXn1pt}
 P_\w( \mathfrak{z}_n \leq x ) \Lra 
\begin{cases}
 \pi_{\l,\k} [ x^{-1/\k}, \infty ) & \text{for} \ x>0 \ \text{if} \ \k \in (0,1) \\
 \pi_{\l,\k} [-x,\infty) & \text{for} \ x\in\R \ \text{if} \ \k \in [1,2), 
\end{cases}
\ee
and here $\Lra$ denotes weak convergence of $\R$-valued random variables. 
Note that this is a weaker statement than the quenched limit that was obtained for the hitting times. 
Unfortunately, weak convergence of all one-dimensional projections of
a random probability measure is not enough to specify the weak limit
of the random probability measure.  
For example, suppose that $\k \in (0,1)$. 
If $\s_{\l,\k}$ is the transformation of the random probability
measure $\pi_{\l,\k}$ defined by letting $\s_{\l,\k}(-\infty,x]$ equal
the right hand side of \eqref{WQLXn1pt}, one is tempted to guess that
$P_\w(\mathfrak{z}_n \in \cdot ) \Lra \s_{\l,\k}$ in the sense of weak
convergence of random probability measures.  However, it can be seen from our results below that this is not true (see Corollary \ref{c:fixed.time}).  

\subsection{Main Results}

The original goal of the current paper was to obtain a full weak limit for the random probability
measure $P_\w( \mathfrak{z}_n \in \cdot)$.
However, it turned out to be
necessary to obtain a weak limit for not just the 
quenched distribution of the hitting $T_n$ but also for the quenched
distribution of the path process of the sequence of hitting times.  
This result, in turn leads to not only a weak limit for the quenched
distribution of $X_n$ but also to the weak limit of the quenched
distribution of the entire path of the RWRE, as we will see in the
sequel. 

To begin, let $D_\infty$ be the space of \cadlag\  functions
(continuous from the right with left limits) on $[0,\infty)$.  We will
equip $D_\infty$ with the $M_1$-Skorohod metric $d_\infty^{M_1}$
(instead of the more standard and slightly stronger $J_1$-Skorohod metric
$d_\infty^{J_1}$; the definitions of the Skorohod metrics are given in
Section \ref{MTgeneral}).  
Let $\mathcal{M}_1(D_\infty)$ be the space of probability measures on
$D_\infty$ equipped with the topology of weak convergence induced by
the $M_1$-metric $d_\infty^{M_1}$ on $D_\infty$. Since $(D_\infty,
d_\infty^{M_1})$ is a Polish space, this topology is equivalent to
topology induced by the Prohorov metric $\rho^{M_1}$ (see Section
\ref{MTgeneral} for a precise definition).  

For any realization of the random walk and $\e>0$, let $\T_\e \in D_\infty$ be defined by 
\be\label{Tedef}
 \T_\e(t) = 
\begin{cases}
 \e^{1/\k} T_{t/\e} & \k \in (0,1) \\
 \e (T_{t/\e} - t/\e D(1/\e)) & \k = 1 \\
 \e^{1/\k} (T_{t/\e} - t/(\e\vp)) & \k \in (1,2)
\end{cases}
\ee
(here and in the sequel  we define hitting times of non-integer points by $T_x = T_{\fl{x}}$.) 
In the case $\k=1$ the function $D$ is the function in
\eqref{acs} extended to all $x>0$; we will define it explicitly in
Section \ref{CompareWithExp}. It is easy to see that, for each
environment $\w$ and any $\e>0$, $\T_\e$ is a 
well-defined $D_\infty$-valued random variable; we denote by $m_{\e,\w}$ the
quenched law of $\T_\e$ on $D_\infty$.  This law is a measurable
function of the environment, hence a $\mathcal{M}_1(D_\infty)$-valued
random variable. We wish to show that this random variable converges
weakly as $\e\to 0$. In order to identify the limit we need to
introduce additional notation.  

Let $\mathcal{M}_p((0,\infty] \times [0,\infty))$ be the space of
Radon point processes on $(0,\infty] \times [0,\infty)$. These are
point processes assigning finite mass to $[\e,\infty]\times[0,T]$ for
any $\e>0$ and $T<\infty$. The topology of vague convergence on this
space is metrizable, and converts $\mathcal{M}_p((0,\infty] \times
[0,\infty))$ into a complete separable metric space; see
\cite[Proposition 3.17]{rEVRVPP}. We denote by $\mathcal{M}_p^f((0,\infty]
\times [0,\infty))$ the subset of $\mathcal{M}_p((0,\infty] \times
[0,\infty))$ of point processes that do not put any mass on points
with infinite first coordinate. 
Let $\vec{\tau} = \{ \tau_i \}_{i\geq 1}$ be a sequence of
i.i.d. standard exponential random variables. For a point
process $\zeta = \sum_{i\geq 1} \d_{(x_i,t_i)}\in
\mathcal{M}_p^f((0,\infty] \times [0,\infty))$  
and $\d>0$ we define a stochastic process (random path)
$W_\d(\zeta,\vec\tau)$  with sample paths in $D_\infty$ by 
\[
 W_\d(\zeta,\vec{\tau})(t) = \sum_{i\geq 1} x_i \tau_i \ind{x_i > \d, t_i \leq t}. 
\]
We also let 
\be\label{Wdef}
 W(\zeta,\vec{\tau})(t) = 
\begin{cases}
 \sum_{i\geq 1} x_i \tau_i \ind{t_i \leq t} & \text{if the sum is finite}\\
 0 & \text{otherwise}. 
\end{cases}
\ee
\begin{rem}
The notation $W_\d(\zeta,\vec{\tau})$ and $W(\zeta,\vec{\tau})$ is
somewhat misleading since the actual definitions depend on the
(measurable) ordering chosen for the points of $\zeta$. Since
$\vec{\tau}$ is an i.i.d.\ sequence of random variables, the choice of
ordering will not affect the laws of $W_\d(\zeta,\vec{\tau})$
and $W(\zeta,\vec{\tau})$, and we are only concerned with the laws of
these processes. 

It is clear that $\lim_{\d \ra 0} W_\d(\zeta,\vec\tau) =
W(\zeta,\vec\tau)$ in $D_\infty$ for every choice of
$\vec{\tau}$ for 
which $W(\zeta,\vec{\tau})(t)<\infty$ for each $t<\infty$. We will
impose assumptions on the point processes $\zeta$ such that
this holds with probability one. 
\end{rem}

For any point process $\zeta$ such that, with probability 1,
$W(\zeta,\vec{\tau})(t)<\infty$ for each $t<\infty$, 
the definitions of $W_\d(\zeta,\vec\tau)$ and $W(\zeta,\vec\tau)$
induce in natural way probability measures on $D_\infty$.   
Define functions $\H_\d, \H: \mathcal{M}_p((0,\infty]\times
[0,\infty)) \ra \mathcal{M}_1(D_\infty)$ by  
\be\label{Hdef}
 \H_\d(\zeta)(\cdot) = \Pv_\tau( W_\d(\zeta,\vec\tau) \in \cdot ),
 \quad \text{and} \quad \H(\zeta)(\cdot) = \Pv_\tau( W(\zeta,\vec\tau)
 \in \cdot ),
\ee
when $\zeta\in \mathcal{M}_p^f((0,\infty] \times [0,\infty))$ and (in the case of $\H(\zeta)$) when $W(\zeta,\vec{\tau})(t)<\infty$ for each $t<\infty$ with probability 1.
Otherwise we define $\H_\d(\zeta)$ or $\H(\zeta)$, respectively, to be the Dirac point mass at the zero process in $D_\infty$. 
Here $\Pv_\tau$ is the
distribution of the i.i.d.\ sequence of the standard exponential
random variables $\vec\tau = \{\tau_i\}_{i\geq 1}$.  

Before stating our theorem we need one last bit of notation. The cases
$\k \in[1,2)$ require a centering term in the limit. Thus, for any $m
\in \R$ let $\ell(m) \in \mathcal{M}_1(D_\infty)$ be the Dirac point
mass measure that is concentrated on the linear path $t\mapsto m
t$. If $X$ is a $D_\infty$-valued random variable with distribution $\mu \in
\mathcal{M}_1(D_\infty)$, then $\mu * \lm(-m)$ is the distribution of
the path $\{ t \mapsto X(t) - mt \}$.  
\begin{thm}\label{WQLTn1}
Let $m_{\e}(\cdot)=m_{\e,\w}(\cdot) = P_\w(\T_\e \in \cdot)$ be the quenched distribution
of the path $\T_\e$. For $0<\k<2$ let $\l=C_0\k/\bar\nu$, where $C_0$
and $\bar\nu$ are given, respectively, by \eqref{btail} and \eqref{e:nubar}
below. Let $N_{\l,\k}$ be a Poisson
point process on $(0,\infty] \times [0,\infty)$ whose intensity
measure puts no mass on infinite points, and is given by 
$\l x^{-\k-1} \, dx \, dt$ on finite points. 
Then, $m_\e \Lra \mu_{\l,\k}$ as $\e \ra 0$ where 
\begin{equation}\label{mulkdef}
 \mu_{\l,\k} = 
\begin{cases}
 \H(N_{\l,\k}) & \text{ if } \k \in (0,1) \\
 \lim_{\d\ra 0} \H_\d(N_{\l,1}) * \lm(- \l \log(1/\d) ) & \text{ if } \k = 1 \\
 \lim_{\d\ra 0} \H_\d(N_{\l,\k}) * \lm\left(-\l \d^{-\k+1}/(\k-1)\right) & \text{ if } \k \in (1,2).
\end{cases}
\end{equation}
\end{thm}

\begin{rem}
The limits in the definition of $\mu_{\l,\k}$ in \eqref{mulkdef} when $\k \in [1,2)$
are weak limits in $\mathcal{M}_1\bigl(
(D_\infty,d_\infty^{M_1})\bigr)$. In fact, we will see in the sequel
that these limits exist even as a.s.\ limits in $\mathcal{M}_1\bigl(
(D_\infty,d_\infty^{M_1})\bigr)$. Furthermore, 
the defintion of $\mu_{\l,\k}$ as $\H(N_{\l,\k})$ in the case $\k \in (0,1)$ is valid by the 
well-known fact that for each
$t<\infty$, 
$W(N_{\l,\k},\vec{\tau})(t)<\infty$ with
$\Pv_\tau$-probability 1 for almost every realization of the Poisson
point process $N_{\l,\k}$. 
\end{rem}

As mentioned above, we will prove the existence of a weak limit for the quenched
distribution of the entire path of the RWRE.  To this end, we
define a centered and scaled path of the random walk $\chi_\e \in D_\infty$ by  
\be\label{chidef}
\chi_\e(t) 
= \begin{cases}
   \e^\k X_{\fl{t/\e}} & \k \in (0,1) \\
   \frac{1}{\e \d(1/\e)^2} \left( X_{\fl{t/\e}} - t \d(1/\e) \right) & \k = 1 \\
   \vp^{-1-1/\k} \e^{1/\k} \left(X_{\fl{t/\e}} - t \vp / \e \right) & \k \in (1,2), 
  \end{cases}
\ee
where in the case $\k=1$, $\d(x)$ is a function that satisfies $\d(x)
D(\d(x)) = x + o(1)$ as $x\ra\infty$. Here $D$ is the same function as
in \eqref{Tedef}. Note that, since $D(x) \sim A
\log x$,  this implies that $\d(x) \sim x/(A \log x)$ so that the
scaling factor in the definition of $\chi_\e$ when $\kappa = 1$ is
asymptotic to $\e ( A \log(1/\e) )^2$ as $\e\to 0$.  Let $p_{\e,\w} = P_\w(\chi_\e
\in \cdot)$ be the quenched law of $\chi_\e$ on $D_\infty$. It is a $\mathcal{M}_1(D_\infty)$-valued
random variable defined on $\Omega$. 

The weak limits of $p_{\e,\w}$ will be obtained by comparing the paths
of the location of the RWRE $\chi_\e$ to appropriately transformed 
paths of the hitting times $\T_\e$. To this end, we define
two transformations of paths.  Let $D_{u,\uparrow}^+ \subset D_\infty$
consist of functions that are (weakly) monotone increasing, with
$x(0)\geq 0$ and $\lim_{t\ra\infty} x(t) = \infty$.  
Define the time-space inversion function $\mathfrak{I}:D_{u,\uparrow}^+ \ra
D_{u,\uparrow}^+$ by  
\be \label{Idef}
 \mathfrak{I}x(t) = \sup\{ s\geq 0: x(s) \leq t \}, \, t\geq 0, \ x\in  D_{u,\uparrow}^+ . 
\ee
Also, define the spatial reflection function $\mathfrak{R}: D_\infty
\ra D_\infty$ by $\mathfrak{R}x(t) = -x(t)$, $t\geq 0, \ x\in
D_\infty$. 
\begin{thm}\label{WQLXn} \ ({\bf a}) \ The following coupling results
  hold. 
 \begin{enumerate}
  \item If $\k \in (0,1)$, then for any $s<\infty$
\[
\lim_{\e\ra 0} \P\left( \sup_{t\leq s} | \chi_\e(t) - \mathfrak{I}\T_{\e^\k}(t) | \geq \eta \right) = 0, \quad \forall \eta> 0. 
\]
  \item If $\k = 1$, then 
\[
 \lim_{\e\ra 0} \P( d_\infty^{M_1}(\chi_\e, -\T_{1/\d(1/\e)}) \geq \eta ) = 0, \quad \forall \eta>0. 
\]
 \item If $\k \in (1,2)$, then 
\[
 \lim_{\e\ra 0} \P( d_\infty^{M_1}(\chi_\e, -\T_{\e/\vp}) \geq \eta ) = 0, \quad \forall \eta>0. 
\]
 \end{enumerate}

\medskip

({\bf b}) \ Let $p_{\e}(\cdot)=p_{\e,\w}(\cdot) = P_\w( \chi_\e \in \cdot)$ be the quenched
distribution of the path $\chi_\e$, and let $\mu_{\l,\k}$ be 
the random probability distribution on paths defined in \eqref{mulkdef}.
Then $p_{\e} \Lra \mu_{\l,\k} \circ \mathfrak{I}^{-1}$ as $\e \ra 0$
if $\k \in (0,1)$ and $p_{\e} \Lra \mu_{\l,\k} \circ
\mathfrak{R}^{-1}$ as $\e \ra 0$ if $\k \in [1,2)$,  weakly in
$\mathcal{M}_1(D_\infty)$. 
\end{thm}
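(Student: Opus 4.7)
My plan for Theorem \ref{WQLXn} is to prove part (a) by a direct pathwise coupling exploiting the fact that $X_n$ and $\{T_x\}$ are essentially inverses of one another, and then to deduce part (b) from (a) by combining with Theorem \ref{WQLTn1} through a continuous mapping argument for the transformations $\mathfrak{I}$ and $\mathfrak{R}$ on $\mathcal{M}_1(D_\infty)$.

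For part (a) with $\k\in(0,1)$, I would set $\hat X_n = \max_{k\leq n}X_k$. Transience gives $\hat X_n = \sup\{x\geq 0 : T_x\leq n\}$, so the scaled running-maximum process coincides exactly with $\mathfrak{I}\T_{\e^\k}$ at each $t$. The discrepancy $\chi_\e(t) - \mathfrak{I}\T_{\e^\k}(t)$ is therefore bounded by $\e^\k\max_{k\leq s/\e}(\hat X_k - X_k)$, the scaled maximum backtracking on $[0,s/\e]$. This is a standard quantity for transient RWRE and can be controlled by a union bound together with the exponential tails of individual backtracking distances that follow from Assumptions \ref{iidasm}--\ref{tasm}.

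For part (a) with $\k\in(1,2)$, the starting point is the almost-equality $T_{X_n}\approx n$ (error at most $1$) together with the local law of large numbers $T_{y+k}-T_y\approx k/\vp$ for $k$ small compared to $y$. Writing $n_\e(t) = \fl{t/\e}$ and $k_\e(t) = \fl{t\vp/\e}$, these combine to give
\[
X_{n_\e(t)} - k_\e(t) \;=\; -\vp\bigl(T_{k_\e(t)} - n_\e(t)\bigr) \;+\; R_\e(t),
\]
with a remainder $R_\e$ whose size is governed by the modulus of continuity of the rescaled hitting-time process over a window of width $|X_{n_\e(t)} - k_\e(t)|$, which is itself $O(n_\e^{1/\k})$. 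Rescaling and passing to the path level, this identifies the leading part of $\chi_\e$ as $-\T_{\e/\vp}$, with an $M_1$ error tending to $0$ in probability thanks to the tightness of $\T_{\e/\vp}$ furnished by Theorem \ref{WQLTn1}. The $\k=1$ case is analogous but uses the asymptotic $\d(x)D(\d(x)) = x + o(1)$ and the centering function $D$ in place of the constant $\vp$.

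For part (b), the pathwise averaged statement in (a) translates directly to closeness of the quenched laws: writing $Y_\e$ for the comparison process and $q_{\e,\w}$ for its quenched law, one has $\rho^{M_1}(p_{\e,\w},q_{\e,\w}) \leq \eta + P_\w\bigl(d_\infty^{M_1}(\chi_\e,Y_\e)>\eta\bigr)$, and taking expectations over $\w$ shows $\rho^{M_1}(p_{\e,\w},q_{\e,\w})\to 0$ in $P$-probability. For $\k\in(0,1)$ one has $q_{\e,\w} = m_{\e^\k,\w}\circ\mathfrak{I}^{-1}$, and Theorem \ref{WQLTn1} applied with $\e^\k \to 0$ gives $m_{\e^\k,\w}\Lra\mu_{\l,\k}$; continuity of $\mathfrak{I}$ in the $M_1$ topology at every non-decreasing pure-jump path whose jump-time set is dense in $(0,\infty)$ (a $\mu_{\l,\k}$-a.s.\ property, as the atoms of $N_{\l,\k}$ accumulate near $x=0$) then yields $q_{\e,\w}\Lra\mu_{\l,\k}\circ\mathfrak{I}^{-1}$ by the continuous mapping theorem on $\mathcal{M}_1(D_\infty)$. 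For $\k\in[1,2)$ the argument is simpler because $\mathfrak{R}$ is an $M_1$-isometry of $D_\infty$. The main obstacle I anticipate is the control of $R_\e$ in the $\k\in[1,2)$ cases, since the naive CLT bound on $T_{y+k}-T_y - k/\vp$ is of the same order as the fluctuations themselves and must be refined using the $M_1$-oscillation of the $\T$-limit process over intervals of shrinking length; a secondary subtlety is the verification of $M_1$-continuity of $\mathfrak{I}$ at typical paths in the support of $\mu_{\l,\k}$.
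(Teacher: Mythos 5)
Your overall architecture --- replace $\chi_\e$ by its running maximum $\chi_\e^*$, control the backtracking $\max_k(X_k^*-X_k)$ by a union bound and the exponential decay of $\P(T_{-x}<\infty)$, identify $\chi_\e^*$ with a transform of the hitting-time path, and lift the coupling to the quenched laws via a Prohorov-distance bound and the continuous mapping theorem --- is exactly the paper's. The $\k\in(0,1)$ case is essentially complete as you describe it: one has the identity $\chi_\e^*(t)=\mathfrak{I}\T_{\e^\k}(t)-\e^\k$, and $\mathfrak{I}$ is $M_1$-continuous at strictly increasing, unbounded, non-negative paths (Whitt, Corollary 13.6.4), which carry full $\H(N_{\l,\k})$-measure since $Z_{\l,\k}$ is a stable subordinator. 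Your reduction of part (b) to part (a) is Lemma \ref{WAEcor} plus the mapping theorem, also as in the paper.

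The gap is in the coupling for $\k\in[1,2)$, and it is the one you flag without resolving. Your decomposition $X_{n_\e(t)}-k_\e(t)=-\vp\bigl(T_{k_\e(t)}-n_\e(t)\bigr)+R_\e(t)$ leaves a pointwise remainder of the same order $n^{1/\k}$ as the fluctuations: after rescaling, $\sup_t|R_\e(t)|$ does \emph{not} tend to $0$, because the limit of $\T_{\e/\vp}$ is a jump process whose oscillation over the shrinking window of rescaled width $\sim\e^{1-1/\k}$ remains of order one near jump times. No refinement of an oscillation bound can make this uniform error small, and a pointwise decomposition with a non-vanishing pointwise error gives no handle on the $M_1$ distance. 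The paper's resolution is to interpolate linearly, $\tilde T_{x+\theta}=(1-\theta)T_x+\theta T_{x+1}$ (checking first, via an explicit parametric matching, that $d_\infty^{M_1}(\T_\e,\tT_\e)\to0$ in probability), to let $\phi$ be the \emph{exact} inverse of $x\mapsto\tilde T_x$, so that $|X_t^*-\phi(t)|\le 1$ and one obtains the identity $\tT_\e(\phi_\e(t))=-\vp^{1/\k}\chi_\e^*(t)+\vp^{-1}\e^{1/\k}(X^*_{t/\e}-\phi(t/\e))$, uniformly $O(\e^{1/\k})$ --- the remainder you worry about is annihilated by composing with the exact inverse. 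What remains is to replace $\tT_\e\circ\phi_\e$ by $\T_\e\circ\phi_0$ with $\phi_0(t)=\vp t$; this uses tightness of $(\T_\e)$ in $(D_\infty,d_\infty^{M_1})$ from Corollary \ref{AveragedTn}, the a.s.\ uniform convergence $\phi_\e\to\phi_0$ on compacts, and the uniform $M_1$-continuity of the composition map $(x,y)\mapsto x\circ y$ on $K\times\{\phi_0\}$ for $K$ compact and $\phi_0\in C_{\uparrow\uparrow}^+$ (Lemma \ref{compcont}). That composition-continuity step is the missing idea; without it your argument for $\k\in(1,2)$, and its $\k=1$ analogue with $\d(x)D(\d(x))=x+o(\d(x))$, does not close.
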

\begin{rem}
Note that the nature of conversion from time to space in the limiting
random probability measure in $\mathcal{M}_1(D_\infty)$ is very
different in the absence of centering term ($\k \in (0,1)$) from the
case when there is a centering term ($\k \in [1,2)$). When $\k \in
[1,2)$ the conversion is accomplished by multiplying a random path
distributed according to the limiting (random) measure by  -1.  This
is, of course, very different from the switching the time and space
axes required when $\k \in (0,1)$. 
\end{rem}

Observe that, for any $0\leq t<\infty$, 
the map $\Phi_t:\mathcal{M}_1(D_\infty) \ra \mathcal{M}_1(\R)$ defined by 
\be\label{e:projection}
\Phi_t(\mu)(A) = \mu\left( \{ x \in D_\infty: \, x(t) \in A \} \right), \quad \text{for any Borel } A\subset \R,
\ee
is continuous at every $\mu\in \mathcal{M}_1(D_\infty)$ concentrated on paths continuous at $t$.
Since the limiting probability measures on
$\mathcal{M}_1(D_\infty)$ obtained in Theorem \ref{WQLXn} is
concentrated on $\mu$ with this property, the continuous mapping theorem
immediately implies the following weak weak convergence for the
distributions of the location of the random walk at fixed times.
\begin{cor}\label{c:fixed.time}
For $0\leq t<\infty$ let $p_{\e;t}=p_{\e,\w;t} = P_\w( \chi_\e(t) \in
\cdot)\in \mathcal{M}_1(\R)$ be the quenched
distribution of $\chi_\e(t)$, and let $\nu_{\l,\k}$ be the
limiting element of $\mathcal{M}_1(D_\infty)$ given in Theorem
\ref{WQLXn}. Then $p_{\e;t}\Lra \Phi_t( \nu_{\l,\k} )$ weakly in
$\mathcal{M}_1(\R)$. 
\end{cor}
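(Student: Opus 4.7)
The plan is to invoke the continuous mapping theorem, using the weak convergence $p_{\e,\w} \Lra \nu_{\l,\k}$ established in Theorem \ref{WQLXn}(b). The argument reduces to two ingredients: identifying a set of $\mu \in \mathcal{M}_1(D_\infty)$ on which the evaluation map $\Phi_t$ is continuous, and then checking that $\nu_{\l,\k}$ is almost surely supported on this set. The paragraph preceding the corollary already sketches this reduction; my task is to fill in both ingredients.

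For the first ingredient, I would observe that $\Phi_t$ is continuous at any $\mu$ concentrated on paths continuous at $t$. This is a standard continuous mapping argument: the evaluation $x \mapsto x(t)$ on $(D_\infty, d_\infty^{M_1})$ is continuous at every $x$ that is continuous at $t$, because $M_1$-convergence forces pointwise convergence at every continuity point of the limit; hence if $\mu_n \to \mu$ weakly in $\mathcal{M}_1(D_\infty)$ and $\mu$ charges only paths continuous at $t$, then $\Phi_t(\mu_n) \to \Phi_t(\mu)$ weakly in $\mathcal{M}_1(\R)$. Applied to the random measures $p_{\e,\w} \Lra \nu_{\l,\k}$, this yields $\Phi_t(p_{\e,\w}) \Lra \Phi_t(\nu_{\l,\k})$ as soon as the concentration property holds $\P$-almost surely for $\nu_{\l,\k}$.

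For the second ingredient I would split into the cases of Theorem \ref{WQLXn}(b). When $\k \in [1,2)$, $\nu_{\l,\k} = \mu_{\l,\k} \circ \mathfrak{R}^{-1}$, and since the spatial reflection $\mathfrak{R}$ preserves the continuity set of a path, it suffices to verify the property for $\mu_{\l,\k}$. By Theorem \ref{WQLTn1}, a typical path under $\mu_{\l,\k}$ is (up to a deterministic, continuous linear drift) $W(N_{\l,\k},\vec\tau)$, whose jumps occur only at the time coordinates $t_i$ of atoms of $N_{\l,\k}$. Since the intensity measure $\l x^{-\k-1}\,dx\,dt$ puts zero mass on the vertical line $\{t\} \times (0,\infty]$, $\P$-almost surely no atom of $N_{\l,\k}$ sits at time $t$, so $\Pv_\tau$-almost surely the path is continuous at $t$.

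The only mildly delicate point, which I would label as the main obstacle, is the case $\k \in (0,1)$, where $\nu_{\l,\k} = \mu_{\l,\k} \circ \mathfrak{I}^{-1}$ and the inversion $\mathfrak{I}$ does not preserve continuity. A jump of $\mathfrak{I}x$ at $t$ occurs precisely when $t$ falls in a flat interval of the increasing step function $x = W(N_{\l,\k},\vec\tau)$; equivalently, when $t$ equals one of the countably many values of $W(N_{\l,\k},\vec\tau)$ at its jump times, or one of the left limits at those jump times. Conditional on $N_{\l,\k}$, each such value is a weighted sum of independent exponentials $\tau_i$ with strictly positive weights, hence has an absolutely continuous law, so a fixed real number $t$ is almost surely avoided by each of these countably many values. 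The concentration property therefore holds in this case as well, and the continuous mapping theorem yields $p_{\e;t} = \Phi_t(p_{\e,\w}) \Lra \Phi_t(\nu_{\l,\k})$ in $\mathcal{M}_1(\R)$, as claimed.
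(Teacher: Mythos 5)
Your proposal is correct and takes the same route as the paper: the paper proves this corollary in the single paragraph preceding its statement, by noting that $\Phi_t$ is continuous at measures concentrated on paths continuous at $t$ and invoking the continuous mapping theorem. You supply the details the paper leaves implicit — in particular the only nontrivial verification, that for $\k\in(0,1)$ the inverted measure $\mu_{\l,\k}\circ\mathfrak{I}^{-1}$ is a.s.\ concentrated on paths continuous at $t$, which you correctly reduce to the conditional absolute continuity of the countably many jump levels $\sum_{i}x_i\tau_i\ind{t_i\leq t_j}$ of $W(N_{\l,\k},\vec\tau)$.
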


Theorems \ref{WQLTn1} and \ref{WQLXn} imply the following corollaries
on the convergence of $\T_\e$ and $\chi_\e$ under the averaged measure
$\P$.  
\begin{cor}\label{AveragedTn}
For any $\k \in (0,2)$, the hitting time paths $\T_\e$, viewed as
random elements of $(D_\infty, d_\infty^{M_1})$, converge weakly under
the averaged measure $\P$. Furthermore, 
\begin{enumerate}
 \item if $\k \in (0,1)$, the limit is a $\k$-stable L\'evy
   subordinator; 
 \item If $\k \in [1,2)$, the limit is a $\k$-stable L\'evy process
   that is totally skewed to the right. Moreover, if $\k \in (1,2)$, 
   the limit is a strictly stable L\'evy process.  
\end{enumerate}
\end{cor}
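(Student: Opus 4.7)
The plan is to deduce the averaged convergence from the weak weak quenched limit in Theorem \ref{WQLTn1} and then to identify the limiting distribution as a stable L\'evy process by means of the marking theorem for Poisson point processes. The first observation is that the averaged law $\P(\T_\e \in \cdot)$ is precisely the mean measure of the random probability measure $m_{\e,\w}$: for every Borel $A \subset D_\infty$, $\P(\T_\e \in A) = E_P[m_{\e,\w}(A)]$. For any bounded continuous $f:D_\infty \ra \R$, the functional $\mu \mapsto \int f\,d\mu$ is continuous on $\mathcal{M}_1(D_\infty)$ equipped with the weak topology, so Theorem \ref{WQLTn1} yields $\int f\,dm_{\e,\w} \Lra \int f\,d\mu_{\l,\k}$ as real-valued random variables, and the bounded convergence theorem gives $E_P[\int f\,dm_{\e,\w}] \to E_P[\int f\,d\mu_{\l,\k}]$. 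Consequently $\P(\T_\e \in \cdot) \Lra \bar\mu_{\l,\k} := E_P[\mu_{\l,\k}]$ weakly in $\mathcal{M}_1(D_\infty)$.

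It remains to identify $\bar\mu_{\l,\k}$ as a stable L\'evy process with the claimed properties. For $\k \in (0,1)$, $\mu_{\l,\k} = \H(N_{\l,\k})$ is the conditional law of $W(N_{\l,\k},\vec\tau)$ given $N_{\l,\k}$, so $\bar\mu_{\l,\k}$ is the unconditional law of this process. By the marking theorem applied to the independent exponential marks $\tau_i$, the point process $\sum_i \d_{(x_i\tau_i,\,t_i)}$ is itself Poisson on $(0,\infty)\times[0,\infty)$, whose intensity is computed by a direct change of variables (integrating out the mark) to be $\l\Gamma(\k+1)\,y^{-\k-1}\,dy\,dt$. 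Consequently $t\mapsto \sum_i x_i\tau_i\,\one\{t_i\leq t\}$ has independent stationary increments (from the disjointness of $t$-projections of disjoint-time regions of the PPP) and only positive jumps, making it a $\k$-stable subordinator. For $\k \in [1,2)$ the same marking identity holds, but the $\d \to 0$ limit in Theorem \ref{WQLTn1} and the centering must be handled together. The cleanest route is to compute the averaged Laplace transform of $W_\d(N_{\l,\k},\vec\tau)(t)$ minus its centering directly from the intensity of the marked PPP and send $\d \to 0$: the explicit centering $\l \log(1/\d)$ (for $\k=1$) or $\l\d^{-\k+1}/(\k-1)$ (for $\k\in(1,2)$) exactly cancels the divergent compensator of the truncated jump sum and produces a Laplace exponent that one recognizes as that of a totally skewed to the right $\k$-stable L\'evy process. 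For $\k\in(1,2)$ no residual linear drift remains, so the limit is strictly $\k$-stable.

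The first reduction is essentially automatic, so the principal technical work lies in the identification for $\k\in[1,2)$: justifying the interchange of $\d\to 0$ with $E_P[\cdot]$ (which can be handled by checking uniform control in $\d$ of the averaged Laplace functionals, aided by the fact noted after Theorem \ref{WQLTn1} that the $\d$-limit exists almost surely in $\mathcal{M}_1(D_\infty)$), and verifying that the specific centering in Theorem \ref{WQLTn1} is exactly the L\'evy compensator of the averaged truncated jump sum so that the limit has no spurious drift. Once this is done, the stationary independent increments structure and the total right-skewness of the limit both flow directly from the underlying Poisson structure with purely positive jumps.
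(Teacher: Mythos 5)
Your reduction of the averaged convergence to Theorem \ref{WQLTn1} --- writing $\P(\T_\e\in\cdot)$ as the mean of $m_{\e,\w}$, applying the continuous mapping theorem to bounded evaluation functionals, and passing to expectations using that these are bounded between $0$ and $1$ --- is exactly the paper's argument (the paper tests against continuity sets $A$ with $\mu_{\l,\k}(\del A)=0$ rather than bounded continuous functions, which is immaterial). For the identification of the limit the paper merely cites the proposition in Section 2 of \cite{kRPWOD2} and calls the stability ``standard,'' whereas you supply that standard argument explicitly via the marking theorem and the Laplace exponent; this is fine, with the one caveat that for $\k=1$ the mean of the full truncated sum $\sum_i x_i\tau_i\ind{x_i>\d,\,t_i\le t}$ is infinite, so the centering $\l t\log(1/\d)$ compensates only the jumps with $x_i\in(\d,1]$ and your phrase ``cancels the divergent compensator of the truncated jump sum'' should be read in that L\'evy--It\^o sense.
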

Since a stable subordinator is a strictly increasing process, its
inverse has continuous sample paths. Correspondingly, we can
strengthen the topology on the space $D_\infty$ when considering weak
convergence of the paths of the location of the RWRE 
under the average probability measure $\P$ in the case
$\k\in(0,1)$. To this end, let $(D_\infty, d_\infty^U)$ denote the
space $D_\infty$ equipped with the topology of uniform convergence on
compact sets. This space is not separable, but Theorem 6.6 in \cite
{bCOPM} allows us to conclude weak convergence on the
ball-$\sigma$-field in that space, the so-called weak$^\circ$
convergence. Moving from the $M_1$ topology to the $J_1$ topology, on
the other hand, does not cause any difficulties. 
\begin{cor}\label{AveragedXn} \ 

\begin{enumerate}
 \item If $\k \in (0,1)$ then the paths $\chi_\e$, viewed as
random elements of $(D_\infty, d_\infty^{J_1})$, converge weakly under
the averaged measure $\P$ to the inverse of a $\k$-stable
subordinator.  Furthermore, $\chi_\e$ as
random elements of $(D_\infty, d_\infty^{U})$ equipped with the
ball-$\sigma$-field, we have weak$^\circ$
convergence to the same limit.
 \item If $\k \in [1,2)$, then the paths $\chi_\e$, viewed as
random elements of $(D_\infty, d_\infty^{M_1})$, converge weakly to a
$\k$-stable L\'evy process that is totally skewed to the
left. Moreover, if $\k \in (1,2)$, then the limit is a strictly stable
L\'evy process.  
\end{enumerate}
\end{cor}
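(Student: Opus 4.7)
The plan is to derive Corollary \ref{AveragedXn} by combining Corollary \ref{AveragedTn} on the averaged convergence of the hitting-time paths with the in-probability couplings in Theorem \ref{WQLXn}(a), applying the continuous mapping theorem to the reflection $\mathfrak R$ or the inversion $\mathfrak I$ and then absorbing the coupling error by a Slutsky-type argument.

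The case $\k \in [1,2)$ is essentially immediate. Corollary \ref{AveragedTn} gives that $\T_{1/\d(1/\e)}$ (when $\k=1$) and $\T_{\e/\vp}$ (when $\k \in (1,2)$) converge weakly under $\P$ in $(D_\infty, d_\infty^{M_1})$ to a $\k$-stable L\'evy process $L$ totally skewed to the right, strictly stable when $\k\in(1,2)$. Since the spatial reflection $\mathfrak R$ is globally continuous on $(D_\infty, d_\infty^{M_1})$, the continuous mapping theorem yields that $-\T_{1/\d(1/\e)}$ and $-\T_{\e/\vp}$ converge weakly to $-L$, still $\k$-stable but now totally skewed to the left and still strictly stable for $\k\in(1,2)$. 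Combining with Theorem \ref{WQLXn}(a)(ii)--(iii) via Slutsky then gives part (2).

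For $\k \in (0,1)$, Corollary \ref{AveragedTn} yields $\T_{\e^\k} \Rightarrow S$ in $(D_\infty, d_\infty^{M_1})$ under $\P$, with $S$ a $\k$-stable subordinator. Almost every sample path of $S$ lies in $D^+_{u,\uparrow}$, is strictly increasing and diverges to $\infty$, and at such paths the inversion $\mathfrak I$ is continuous in the $M_1$ topology with a continuous output, so $\mathfrak I S$ has continuous sample paths a.s. The continuous mapping theorem therefore gives $\mathfrak I \T_{\e^\k} \Rightarrow \mathfrak I S$ in $(D_\infty, d_\infty^{M_1})$, and because the limit is continuous, this convergence automatically coincides with $J_1$-convergence and with uniform convergence on compact sets; in the non-separable space $(D_\infty, d_\infty^{U})$ this takes the form of weak$^\circ$ convergence on the ball-$\s$-field (cf. Theorem 6.6 of \cite{bCOPM}). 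Since Theorem \ref{WQLXn}(a)(i) already provides a uniform-on-compacts coupling in $\P$-probability, Slutsky transfers the convergence from $\mathfrak I \T_{\e^\k}$ to $\chi_\e$ in each of the three topologies, giving part (1).

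The main subtle point is the continuity of $\mathfrak I$ on $(D^+_{u,\uparrow}, d_\infty^{M_1})$ at strictly increasing, divergent limits. This is a standard property of $M_1$-convergence of monotone functions: strict monotonicity of the limit and continuity of its inverse force the inverses of the approximating sequence to converge uniformly on compact sets. The $\k$-stable subordinator has infinitely many small jumps in every neighborhood of $0$, but this causes no trouble, because the inverse $\mathfrak I S$ is automatically continuous precisely due to strict monotonicity of $S$. Beyond this, the argument is routine bookkeeping with the continuous mapping theorem and Slutsky.
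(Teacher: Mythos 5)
Your proposal is correct and follows essentially the same route as the paper: part (2) from Corollary \ref{AveragedTn}, continuity of $\mathfrak{R}$, and the coupling in Theorem \ref{WQLXn}(a) via a converging-together argument; part (1) from continuity of $\mathfrak{I}$ at strictly increasing, unbounded paths together with the uniform-on-compacts coupling. The one continuity fact you rederive by hand --- that $M_1$-convergence to a strictly increasing, divergent limit forces locally uniform convergence of the inverses --- is exactly Corollary 13.6.4 of \cite{wSPL}, which the paper cites directly.
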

\begin{rem}
 The statement of Corollary \ref{AveragedXn} in the case $\k \in
 (0,1)$  appeared in Remark 2.5 in \cite{eszStable}. To the
 best of our knowledge the other statements in Corollaries
 \ref{AveragedTn} and \ref{AveragedXn} are new.  
\end{rem}

Part (2) of Corollary \ref{AveragedXn} is an immediate consequence of
the corresponding part of Corollary \ref{AveragedTn}, the coupling
results in parts (2) and (3) of Theorem \ref{WQLXn} and Theorem 3.1 in
\cite {bCOPM}. Further, \cite[Corollary
13.6.4]{wSPL} says that the operator $\mathfrak{I}$ from the subset
$D_{u,\uparrow\uparrow}^{+}\subset D_{u,\uparrow}^+$ of strictly increasing,
non-negative, unbounded paths endowed with the $d_\infty^{M_1}$ metric
to $D_{u,\uparrow}^+$ endowed with the $d_\infty^U$ metric, is
continuous. Since, in the case $0<\k<1$, a $\k$-stable subordinator is
in $D_{u,\uparrow\uparrow}^{+}$ with probability one, the continuous mapping
theorem shows that part (1) of Corollary \ref{AveragedXn} also follows
from the corresponding part of Corollary \ref{AveragedTn}. 


The proof of Corollary \ref{AveragedTn} is also rather
straightforward, but, because it introduces certain key ideas and
notation used later in the paper, we present the proof here.  
\begin{proof}[Proof of Corollary \ref{AveragedTn}]
Let $N_{\l,\k}$ be the Poisson point process on $(0,\infty]\times [0,\infty)$ 
defined in Theorem \ref{WQLTn1}, and let $\vec\tau = \{\tau_i\}_{i\geq
  1}$ be an i.i.d. sequence of standard exponential random variables;
we assume that $N_{\l,\k}$ and $\vec\tau$ are defined on two different
probability spaces, with the corresponding probability measures $\Pv$ and $\Pv_\tau$.
On the product probability space we define 
\be\label{Zlkdef}
 Z_{\l,\k}(t) = 
\begin{cases}
 W(N_{\l,\k},\vec\tau)(t) & \k \in (0,1) \\
 \lim_{\d \ra 0} W_\d(N_{\l,\k}, \vec\tau)(t) - \l t \log(1/\d) & \k = 1 \\
 \lim_{\d \ra 0} W_\d(N_{\l,\k}, \vec\tau)(t) - \l t \d^{1-\k}/(\k-1)
 & \k \in (1,2),  
\end{cases}
\ee
$t\geq 0$. The definition is understood as a.s. convergence in
$(D_\infty, d_\infty^{M_1})$ on the product probability space. 
This convergence takes place by the proposition in Section 2 of \cite{kRPWOD2},
and it is standard to see that $Z_{\l,\k}$ is a $\k$-stable L\'evy process with the required properties of Corollary \ref{AveragedTn}. 
In order to show that the averaged distribution of $\T_\e$ converges
to the distribution of $Z_{\l,\k}$ under the product probability
measure $\Pv\times \Pv_\tau$, it is enough to 
show that $\P(\T_\e \in A) \ra \Pv \times \Pv_\tau( Z_{\l,\k} \in A)$
as $\e \ra 0$ for all cylindrical sets $A\subset
D_\infty$ such that $\Pv\times \Pv_\tau(Z_{\l,\k} \in \del A) =
0$. (Recall that the Borel $\sigma$-field under all the Skorohod
topologies coincides with the
cylindrical $\sigma$-field; see Theorem 11.5.2 in \cite {wSPL}.) Let
$A$ be such a set. 
Recall that $\Pv \times \Pv_{\tau}(Z_{\l,\k} \in A ) = \Ev[
\mu_{\l,\k}(A) ]$, where $\mu_{\l,\k}$ is defined in Theorem \ref{WQLTn1}. 
By Fubini's theorem, $\mu_{\l,\k}(\del A) = 0$ almost surely. 
Also, the evaluation mapping mapping $\mu \mapsto \mu(A)$ on
$\mathcal{M}_1(D_\infty)$ is continuous on the set of measures $\{\mu
\in \mathcal{M}_1(D_\infty) : \, \mu(\del A) = 0 \}$. Since the random
measure $\mu_{\l,\k}$ is in this set with probability one, and since
Theorem \ref{WQLTn1} implies that $m_{\e,\w} \Lra \mu_{\l,\k}$, then
the mapping theorem implies that $m_{\e,\w}(A)$ converges in
distribution to $m_{\l,\k}(A)$. Since these random variables are
between 0 and 1, this implies that 
\[
 \lim_{\e\ra 0} \P( \T_\e \in A ) = \lim_{\e\ra 0} E_P[ m_{\e,\w}(A) ]
 = \Ev[ \mu_{\l,\k}(A) ] = \Pv \times \Pv_\tau (Z_{\l,\k} \in A).  
\]
\end{proof}

%

The limiting random probability measure $\mu_{\l,\k}$ is a $\k$-stable
random element of $\mathcal{M}_1(D_\infty)$ under convolutions. That is, the
convolution of two independent copies of this random probability measure is 
(after re-scaling and shifting) a random probability measure with the same law.  
This can be seen in the same way as the stability of the limiting random probability
measures on $\R$ was checked in \cite{psWQLTn}. Stability of random
probability measures on $D_\infty$ does not seen to have been
investigated before, but a systematic description of infinitely
divisible (in particular, stable) random probability measures on $\R$
was given in \cite{stRPD}; we recall these notions in Section
\ref{StableRPD}. The latter paper introduced also a notion
of $\mathcal{M}_1(\R)$-valued L\'evy process. If we recall the
maps $\Phi_t$,  $0\leq t<\infty$, defined in \eqref{e:projection},
then we can define a (measurable) map $\Phi$ from $\mathcal{M}_1(D_\infty)$ to
$D_\infty(\mathcal{M}_1(\R))$ by setting $\Phi(\mu)$ to be the
measure-valued path $ \{ \Phi_t(\mu), \, t\geq 0 \}$. 

One would expect that a version of Theorem \ref{WQLTn1} would give us 
a convergence to a $\mathcal{M}_1(\R)$-valued L\'evy process as
well. The following corollary gives such convergence, but only in the
sense of convergence of finite dimensional distributions.


\begin{cor}\label{PathMeasureCor}
Let $\mu_{\l,\k}$ and $m_{\e,}$ be the random probability measures
on $D_\infty$ given in Theorem \ref{WQLTn1}, $0<\k<2$  (so that
$m_{\e} \Lra \mu_{\l,\k}$).  
Then $\Phi(m_{\e})$ converges weakly to $\Phi(\mu_{\l,\k})$ in the sense of finite dimensional distributions.  
Moreover, for any $\k \in (0,2)$, $\Phi(\mu_{\l,\k})$ is a stable L\'evy process on
$\mathcal{M}_1(\R)$. It is a strictly stable
L\'evy process if $\k\neq 1$.  
\end{cor}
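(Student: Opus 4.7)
The plan is to prove the finite-dimensional convergence by the continuous mapping theorem, and then to verify directly from the Poisson-process construction of $\mu_{\l,\k}$ that the measure-valued process $\Phi(\mu_{\l,\k})$ has all the properties required of a $\k$-stable $\mathcal{M}_1(\R)$-valued L\'evy process in the sense recalled in Section \ref{StableRPD}.

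For the first assertion, fix $0\leq t_1<\cdots<t_n$. As noted at \eqref{e:projection}, each $\Phi_{t_i}$ is continuous at any $\mu\in\mathcal{M}_1(D_\infty)$ supported on paths continuous at $t_i$. Now $\mu_{\l,\k}$ is, by Theorem \ref{WQLTn1} and the construction \eqref{Zlkdef}, the conditional law under $\Pv_\tau$ of the L\'evy process $Z_{\l,\k}$ given $N_{\l,\k}$. Since a L\'evy process has no fixed jumps, $\Pv\times\Pv_\tau(Z_{\l,\k}\text{ jumps at }t_i)=0$, so Fubini gives $\mu_{\l,\k}(\{x:x\text{ discontinuous at }t_i\})=0$ for each $i$, $\Pv$-a.s. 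The product projection $\mu\mapsto(\Phi_{t_1}(\mu),\dots,\Phi_{t_n}(\mu))$ is therefore continuous at $\mu_{\l,\k}$ almost surely, and applying the continuous mapping theorem to $m_\e\Lra\mu_{\l,\k}$ yields $\Phi(m_\e)\Lra\Phi(\mu_{\l,\k})$ in the sense of finite-dimensional distributions.

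For the L\'evy property of $\Phi(\mu_{\l,\k})$, let $\nu_{s,t}$ denote the conditional law under $\Pv_\tau$ of $Z_{\l,\k}(t)-Z_{\l,\k}(s)$ given $N_{\l,\k}$; this is an $\mathcal{M}_1(\R)$-valued random variable measurable with respect to the restriction of $N_{\l,\k}$ to $(0,\infty]\times(s,t]$. Since $N_{\l,\k}$ has independent restrictions to disjoint rectangles, and since the increments $Z_{\l,\k}(t_j)-Z_{\l,\k}(t_{j-1})$ are conditionally independent under $\Pv_\tau$ given $N_{\l,\k}$ (they depend on disjoint subsets of the $\tau_i$'s), one obtains
\[
\Phi_{t_j}(\mu_{\l,\k}) \;=\; \nu_{0,t_1}\ast\nu_{t_1,t_2}\ast\cdots\ast\nu_{t_{j-1},t_j},
\]
with the $\nu_{t_{j-1},t_j}$ independent as random probability measures, which is exactly the convolution-independent-increments property of an $\mathcal{M}_1(\R)$-valued L\'evy process. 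Stationarity follows from the translation invariance of the intensity $\l x^{-\k-1}\,dx\,dt$ in the $t$-variable, and stochastic continuity follows because, as $h\to 0$, the number of points of $N_{\l,\k}$ in $(\eta,\infty]\times(s,s+h]$ is Poisson with mean $h\l\eta^{-\k}/\k\to 0$ for each $\eta>0$, forcing $\nu_{s,s+h}\to\d_{0}$ in probability in $\mathcal{M}_1(\R)$.

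Stability is then obtained from the scaling invariance of the intensity: for every $c>0$, the transformation $(x,t)\mapsto(c^{-1/\k}x,ct)$ preserves $\l x^{-\k-1}\,dx\,dt$, so from the explicit form \eqref{Zlkdef} the rescaled process $c^{-1/\k}Z_{\l,\k}(c\,\cdot)$ has the same law as $Z_{\l,\k}(\cdot)$ up to a deterministic linear-in-$t$ drift produced by the centering. Conditioning on $N_{\l,\k}$ translates this scaling relation into the $\k$-stability of the finite-dimensional laws of $\Phi(\mu_{\l,\k})$. For $\k\in(0,1)$ there is no centering, so the rescaling is exact, and for $\k\in(1,2)$ the centering $\l t\d^{1-\k}/(\k-1)$ is linear in $t$ and transforms linearly under the scaling; in both cases $\Phi(\mu_{\l,\k})$ is strictly $\k$-stable. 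For $\k=1$ the centering $\l t\log(1/\d)$ picks up an additional $\l t\log c$ term under time-rescaling by $c$, so the process is stable but not strictly stable. The main obstacle I expect is the $\k\in(1,2)$ strictness claim: one must commute the $\d\to 0$ limit in \eqref{Zlkdef} with the spatial rescaling and the convolution on $\mathcal{M}_1(\R)$, and verify that the centering drift is absorbed exactly at the level of random probability measures, not merely at the level of the process $Z_{\l,\k}$.
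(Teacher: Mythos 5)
Your proposal follows essentially the same route as the paper: the finite-dimensional convergence via the continuous mapping theorem applied to $\Phi_{t_1,\dots,t_m}$ (using that $\mu_{\l,\k}$ is a.s.\ concentrated on paths continuous at each fixed $t_i$), and the L\'evy property via the definition $\Phi(\mu_{\l,\k})_{s,t}=\Pv_\tau(Z_{\l,\k}(t)-Z_{\l,\k}(s)\in\cdot)$ together with the independence of the restrictions of $N_{\l,\k}$ to disjoint time strips, shift invariance of Lebesgue measure in $t$, and $N_{\l,\k}((0,\infty]\times\{t_0\})=0$ a.s. The one place you diverge is the stability of the marginals: the paper simply cites Remark 1.5 and the paragraph after Remark 1.6 of \cite{psWQLTn}, whereas you sketch a scaling argument. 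That sketch contains a computational slip: the map $(x,t)\mapsto(c^{-1/\k}x,ct)$ does \emph{not} preserve $\l x^{-\k-1}\,dx\,dt$ (it multiplies it by $c^{-2}$); the invariant joint scaling is $(x,t)\mapsto(c^{1/\k}x,ct)$. In fact no time-scaling is needed, since the definition of a stable L\'evy process on $\mathcal{M}_1(\R)$ only requires each $\Xi(t)$ to be a stable random variable: the convolution of $n$ independent copies of $\Phi_t(\mu_{\l,\k})$ is realized by superposing $n$ independent copies of $N_{\l,\k}$ (intensity $n\l x^{-\k-1}\,dx\,dt$), and scaling the space coordinate alone by $n^{-1/\k}$ restores the original intensity; the centering terms in \eqref{Zlkdef} then contribute the deterministic shift $b_n$ ($b_n\neq 0$ only when $\k=1$, because of the $\log$). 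Your closing worry about commuting the $\d\to0$ limit with the rescaling when $\k\in(1,2)$ is legitimate but is exactly what the cited argument from \cite{psWQLTn} for the one-dimensional marginals already handles.
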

\begin{rem}
 One would like to improve the finite dimensional distribution convergence in Corollary \ref{PathMeasureCor} to a full convergence in distribution of $\mathcal{M}_1(\R)$-valued path processes. Such a statement seems would require setting a topology on the space $D_\infty(\mathcal{M}_1(\R))$ of measure-valued path processes. Choosing an appropriate topology seems to be a difficult task as neither the Skorohod $J_1$-topology nor a natural definition of the Skorohod $M_1$-topology appear to be sufficient. This is complicated by the fact that the mapping $\Phi: \mathcal{M_1}(D_\infty) \ra D_\infty(\mathcal{M}_1(\R))$ is not continuous in these topologies (even on the support of the limiting measure $\mu_{\l,\k}$).
 These issues are discussed further in Section \ref{StableRPD}. 
\end{rem}

\section{Random Environment}\label{Notation}
It will be important for us to identify sections of the environment
that contribute the most to the distribution of the hitting times. To
this end, we define the ladder locations $\nu_k = \nu_k(\w)$ of the environment by  
\be\label{nudef}
 \nu_0 = 0, \quad\text{and}\quad \nu_k = \inf \left\{ j > \nu_{k-1} :
   \, \prod_{i=\nu_{k-1}}^{j-1} \rho_i < 1 \right\} \text{ for } k\geq
 1.  
\ee
(The ladder locations are those locations where the \emph{potential}
of the environment introduced by Sinai \cite{sRRWRE} reaches a new
minimum to the right of the origin.) 
Occasionally we will denote $\nu_1$ by $\nu$ instead for compactness. 
Since the environment is i.i.d., the sections of the environment
$\{\w_x : \nu_k \leq x < \nu_{k+1} \}$ between ladder locations are
also i.i.d. However, the environment immediately to the left of
$\nu_0 = 0$ is different from the environment immediately to the left of
$\nu_k$ for any $k\geq 1$ since 
$\prod_{j=i}^{\nu_k-1} \rho_j < 1$ for any $k\geq 1$ and $0\leq i < \nu_k$ but it can happen that $\prod_{j=i}^{-1} \rho_j \geq 1$ for some $i < 0$.  
Thus, the environment is not stationary
under shifts of the environment by the ladder locations. To resolve
this complication we define a new measure $Q$ on environments by 
\[
 Q(\cdot) = P(\cdot \, | \mathcal{R} ), \qquad\text{where } \mathcal{R} = \left\{ \w: \,  \prod_{j=i}^{-1} \rho_j < 1, \, \forall i \leq -1 \right\}.
\]
It is important to note that the definition of the measure $Q$ only
affects the environment to the left of the origin. Therefore, the
blocks of the environment $\{ \w_x : \, \nu_k \leq x < \nu_{k+1} \}$
between the ladder locations are i.i.d.\ and have the same
distribution under both $P$ and $Q$. For instance 
\begin{equation} \label{e:nubar}
\bar\nu := E_P \nu_1 = E_Q \nu_1 .
\end{equation}
The measure $Q$ is also stationary under shifts of the environment by the ladder locations in the sense that 
$\w$ has the same distribution as $\theta^{\nu_k(\w)}\w$ under $Q$ and
$\nu_1(\theta^{\nu_k(\w)}\w) = \nu_{k+1}(\w) - \nu_k(\w)$. 
Therefore, if we let $\b_i = \b_i(\w) = E_\w[T_{\nu_i} - T_{\nu_{i-1}}] $ for any $i\geq 1$, it follows that $\{\b_i\}_{i\geq 1}$ is stationary under the measure $Q$. 
The following tail asymptotics of the $\b_i$ were derived in \cite{pzSL1} and will be crucial throughout this paper.  There exists a constant $C_0 > 0$ such that 
\be\label{btail}
 Q(\b_1 > x) = Q( E_\w T_\nu > x ) \sim C_0 x^{-\k}, \qquad \text{as } x \ra\infty. 
\ee

We conclude this section with a simple lemma that will be of use later in the paper. 
\begin{lem}
 Let $\bar\b = E_Q[ \b_1 ]$. If $\k>1$, then  $\bar\b = \bar\nu/ \vp$. 
\end{lem}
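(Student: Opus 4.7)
The plan is to compute $\lim_{k\to\infty} T_{\nu_k}/k$ in two different ways and equate the results. The assumption $\kappa>1$ will enter twice: it forces $\vp>0$, so that $T_n/n$ has a finite a.s.\ limit $1/\vp$, and via the tail estimate \eqref{btail} it makes $\bar\b=E_Q[\b_1]$ finite, which is what permits the relevant law of large numbers to apply.

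For the first computation I would use the walk-level LLN. Solomon's theorem gives $X_n/n\to\vp$ $P_\w$-a.s.\ for $P$-a.e.\ $\w$, and hence for $Q$-a.e.\ $\w$ since $Q$ is absolutely continuous with respect to $P$. Since $\vp>0$, this inverts to $T_n/n\to 1/\vp$, and evaluating along the random subsequence $n=\nu_k\to\infty$ yields $T_{\nu_k}/\nu_k\to 1/\vp$. Under $Q$ the ladder gaps $\nu_k-\nu_{k-1}$ are i.i.d.\ with mean $\bar\nu$, so the classical SLLN gives $\nu_k/k\to\bar\nu$ $Q$-a.s. Multiplying,
\[
\frac{T_{\nu_k}}{k} \;=\; \frac{T_{\nu_k}}{\nu_k}\cdot\frac{\nu_k}{k} \;\longrightarrow\; \frac{\bar\nu}{\vp}
\]
$P_\w$-a.s.\ for $Q$-a.e.\ $\w$.

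For the second computation I would apply Birkhoff's ergodic theorem to the block-time sequence $Y_i:=T_{\nu_i}-T_{\nu_{i-1}}$ under the averaged measure obtained by integrating $P_\w(\cdot)$ against $Q(d\w)$. The joint shift sending $\w$ to the $\nu_1$-shifted environment and the walk to its restart at time $T_{\nu_1}$ and position $\nu_1$ is measure-preserving---this is precisely the reason $Q$ rather than $P$ is used, combined with the strong Markov property at $T_{\nu_1}$---and it is ergodic, since the ladder shift is $Q$-ergodic on environments. Consequently $\{Y_i\}$ is stationary and ergodic under the averaged measure, with mean $E_Q E_\w[T_\nu]=E_Q[\b_1]=\bar\b$, finite by \eqref{btail} together with $\kappa>1$. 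Birkhoff's theorem then yields $T_{\nu_k}/k=k^{-1}\sum_{i=1}^k Y_i \to \bar\b$ almost surely. Equating the two a.s.\ limits of $T_{\nu_k}/k$ produces the claim $\bar\b=\bar\nu/\vp$.

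The main subtlety I expect is verifying the stationarity and ergodicity of $\{Y_i\}$ under the averaged measure: it rests on the ladder-shift invariance of $Q$ (the defining property of $Q$) together with the strong Markov property of the walk at the stopping time $T_{\nu_1}$, which together ensure that $(\w, X_\cdot)$ and the shifted pair have the same averaged law. Once that invariance is in hand, both limit computations are routine and the identity is immediate.
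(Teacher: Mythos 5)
Your argument is correct in outline but takes a genuinely different route from the paper's. The paper never touches the walk itself: it works entirely with quenched expectations, applying Birkhoff to the sequence $\{E_\w[T_i-T_{i-1}]\}$ under $P$ (which is trivially stationary and ergodic, being a fixed measurable function of shifts of the i.i.d.\ environment) to get $E_\w T_n/n\to \E T_1=1/\vp$, then evaluating along $\nu_n$ and using $\nu_n/n\to\bar\nu$ to conclude that $\frac1n\sum_{i=1}^n\b_i\to\bar\nu/\vp$ $Q$-a.s.; finally, since $(\b_i)$ is stationary under $Q$, Birkhoff forces this constant a.s.\ limit to equal $E_Q[\b_1]$. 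You instead run the same two-limit scheme at the level of the walk, with $T_n$ in place of $E_\w T_n$ and $Y_i=T_{\nu_i}-T_{\nu_{i-1}}$ in place of $\b_i$. This works, but it shifts the burden onto the joint (annealed) stationarity and ergodicity of $\{Y_i\}$ under $\int P_\w(\cdot)\,Q(d\w)$, and your one-line justification of ergodicity (``the ladder shift is $Q$-ergodic on environments'') is the soft spot: ergodicity of a skew product does not follow from ergodicity of the base, and the $Y_i$ are not even conditionally independent given $\w$ since the walk backtracks across ladder locations. Fortunately you do not need ergodicity at all: stationarity alone (which your strong-Markov-plus-ladder-shift argument does deliver) gives, via Birkhoff, $\frac1k\sum_{i=1}^k Y_i\to E[Y_1\mid\mathcal I]$ a.s.\ and in $L^1$ (using $\bar\b<\infty$ from \eqref{btail} and $\k>1$), and your first computation identifies this limit as the constant $\bar\nu/\vp$; taking expectations then yields $\bar\b=\bar\nu/\vp$. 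This is exactly the ``a.s.\ limit is constant, hence equals the mean'' device the paper uses in its last step. The paper's environment-level version is the cleaner of the two precisely because it avoids any walk-level ergodic-theoretic input; yours buys nothing extra here, though it is the natural argument if one has the annealed regeneration structure already in hand.
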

\begin{proof}
 First, note that the sequence $\{ E_\w[ T_i - T_{i-1} ] \}_{i\geq 1}$
 is ergodic under the measure $P$ (since it represents the shifts of a
 fixed function of an i.i.d., hence ergodic, sequence). 
Therefore, Birkhoff's Ergodic Theorem implies that 
\be\label{EwTnn}
 \lim_{n\ra\infty} \frac{E_\w T_n}{n} = E_P[E_\w T_1] = \E T_1, \quad P\text{-a.s.}
\ee
Since the measure $Q$ is defined by conditioning $P$ on an event of
positive probability, we see that this holds $Q$-a.s.\ as well. 
Moreover, if $\k > 1$, then the limiting velocity $\vp = 1/\E T_1 > 0$
(see \cite{sRWRE} or \cite{zRWRE} for a reference).  

Secondly, note that, since the $\{\nu_{i} - \nu_{i-1} \}_{i\geq 1}$
are i.i.d.\ under Q , it follows that $\lim_{n\ra\infty} \nu_n/n = \bar\nu$, $Q$-a.s.
This implies that
\[
 \lim_{n\ra\infty} \frac{1}{n} \sum_{i=1}^n \b_i = \lim_{n\ra\infty} \frac{E_\w T_{\nu_n}}{n} = \lim_{n\ra\infty} \frac{ E_\w T_{\nu_n}}{\nu_n} \frac{\nu_n}{n} = \frac{\bar\nu}{\vp}, \quad Q\text{-a.s.}
\]
Finally, since the $\b_i$ are stationary under $Q$, it is a
consequence of Birkhoff's Ergodic Theorem that this nonrandom limit of
$1/n \sum_{i=1}^n \b_i$ must coincide with $E_Q[ \b_1 ]$.  
\end{proof}

\section{Topological Generalities}\label{MTgeneral}

\subsection{Skorohod Topologies}
In this section we recall the definitions of the Skorohod $J_1$ and
$M_1$ metrics on the space $D_\infty$, and the corresponding
topologies. We also give certain technical results that will be needed
in the sequel. The details that we omit can be found in \cite{bCOPM}
and \cite{wSPL}. 

For $0<t<\infty$ the $J_1$ and $M_1$ Skorohod metrics on the space
$D_t$ of \cadlag\ functions on $[0,t]$ are defined as follows. 
Let $\L_t$ be the set of time-change functions on $[0,t]$ -- functions
that are strictly increasing and continuous bijections from $[0,t]$ to
itself.  The Skorohod $J_1$-metric (on $D_t$) is defined by 
\[
 d_t^{J_1}(x,y) = \inf_{\l \in \L_t} \max \left \{ \sup_{s\leq t} |
   \l(s) - s | , \, \sup_{s \leq t} | x(\l(s)) - y(s) | \right\}.  
\]
Next, recall that the completed graph of a \cadlag\  function $x \in
D_t$ is the subset $\Gamma_x \subset [0,t] \times \R$ defined by 
\[
 \Gamma_x = \{ (u,v): \, u \in [0,t], \, 
v = (1-\theta) x(u-) + \theta x(u), \text{ for some } \theta \in [0,1] \}. 
\]
The natural order $\preceq_{\Gamma_x}$  on the completed graph is
given by $(u_1,v_1) \preceq_{\Gamma_x} (u_2,v_2)$ if either $u_1 < u_2$ or $u_1 = u_2$ and $|v_1 - x(u_1-)| \leq |v_2 - x(u_1-)|$. 
A \emph{parametric representation} of the completed graph $\Gamma_x$
is a function from $[0,1]$ onto $\Gamma_x$ that is continuous with
respect to the subspace topology on $\Gamma_x$ and 
non-decreasing with respect to the order $\preceq_{\Gamma_x}$. Let
$\Pi(x)$ be the set of parametric representations of $\Gamma_x$,
with each parametric representation given by a pair of functions $u$ and $v$ on $[0,1]$ such that $\Gamma_x = \{(u(s),v(s)): \, s \in[0,1] \}$.
The Skorohod $M_1$-metric on $D_t$ is defined by 
\[
 d^{M_1}_t(x,y) = \inf_{(u,v) \in \Pi(x), \, (u',v') \in \Pi(y)} \max\left\{  \sup_{s \in [0,1]} |u(s) - u'(s)|  ,  \sup_{s \in [0,1]} |v(s) - v'(s)|  \right\}. 
\]
The Skorohod $J_1$ and $M_1$-metrics on $D_t$ for all finite  $t$
produce corresponding metrics on the space $D_\infty$ by 
\[
 d^{J_1}_\infty(x,y) = \int_{0}^{\infty} e^{-t} \left( d^{J_1}_t(x^{(t)},y^{(t)}) \wedge 1 \right) \, dt, \quad \text{and}\quad d^{M_1}_\infty(x,y) = \int_{0}^{\infty} e^{-t} \left( d^{M_1}_t(x^{(t)},y^{(t)}) \wedge 1 \right) \, dt. 
\]
Here, for $x\in D_\infty$, the function $x^{(t)}\in D_t$ is the
restriction of $x$ to the finite time interval $[0,t]$. Using instead the uniform
metric on each $D_t$ produces the metric $d^{U}_\infty$ on the space
$D_\infty$. 

The following is a list of several useful properties of the Skorohod
metrics that we will use throughout the paper; see \cite{wSPL}. 
\begin{itemize}
 \item $d_\infty^{M_1}(x,y) \leq d_\infty^{J_1}(x,y)$.  
 \item $d_\infty^{J_1}(x,y) \leq e^{-s} + \sup_{t\leq s}|x(t)-y(t)|$
   for any $0<s<\infty$;  thus uniform convergence on compact subsets of
   $[0,\infty)$ implies convergence in the $J_1$-Skorohod metric.  
 \item $d_t^{M_1}(x,y) \geq |x(t) - y(t)|$ for each $0<t<\infty$. 
 \item $d_\infty^{J_1}(x_n,x) \ra 0$ if and only if $d_t^{J_1}(x_n,x)
   \ra 0$ for all continuity points $t$ of $x$. An analogous statement
   is true for the $M_1$-Skorohod topology. 
\end{itemize}
The $J_1$ and $M_1$-metrics generate topologies on the space of
\cadlag\ functions. Even though the two metrics are not complete, each
of them has an equivalent metric that is complete. Therefore, the $J_1$ and $M_1$
topologies are the topologies of complete separable metric spaces. 

The $J_1$ and $M_1$-metrics on $D_\infty$ induce in the standard way
the corresponding
Prohorov's metrics, $\rho^{J_1}$ and $\rho^{M_1}$ on the space of
Borel probability measures $\mathcal{M}_1(D_\infty)$. For example, for
any $\mu,\pi \in \mathcal{M}_1(D_\infty)$,  
\[
 \rho^{M_1}(\mu,\pi) = \inf \left\{ \d>0: \mu(A) \leq \pi(A^{\d,M_1})
   + \d, \ 
\text{for every Borel $A\subset D_\infty$}\right\}
\]
(recall that the $J_1$ and $M_1$-metrics generate the same Borel sets
on $D_\infty$; these are also the cylindrical sets). 
Further, 
\[
A^{\d,M_1} = \{ y:\, d_\infty^{M_1}(x,y) < \d, \text{ for some } x \in A \} .
\]
Since $(D_\infty, d_\infty^{M_1})$ is a separable metric space,
convergence in the
Prohorov metric $\rho^{M_1}$ is equivalent to 
convergence in distribution in $(D_\infty, d_\infty^{M_1})$; see
Theorem 3.2.1 in \cite{wSPL}. Moreover, the space $\bigl(
\mathcal{M}_1(D_\infty), \rho^{M_1}\bigr)$ is a complete separable
metric space (Theorem 6.8 in \cite{bCOPM}).

\subsection{Continuity of functionals}
We proceed with two results on the continuity of certain functionals that we will need later. 
We begin, by recalling the following result from \cite{wSPL} on the continuity of the composition map. 
\begin{lem}[Theorems 13.2.2, 13.2.3  in \cite{wSPL}]\label{compcont}
 The composition map $\psi: D_\infty \times D_\infty^+ \ra D_\infty$
 defined by $\psi(x,y)=x\circ y$ is continuous on the set $D_\infty
 \times C_{\uparrow\uparrow}^+$, where $D_\infty^+$ is the set of all
 nonnegative functions in $D_\infty$, and $C_{\uparrow\uparrow}^+$ is the set
 of continuous, non-negative, strictly increasing functions on
 $[0,\infty)$. The continuity holds whenever either the $J_1$-topology is
 used throughout, or the $M_1$-topology is
 used throughout. 
\end{lem}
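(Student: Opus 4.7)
The plan is to treat the two topologies separately, after first reducing to convergence on bounded time intervals: since convergence in $d_\infty^{J_1}$ (resp.\ $d_\infty^{M_1}$) is equivalent to convergence in $d_T^{J_1}$ (resp.\ $d_T^{M_1}$) at every continuity point $T$ of the limit, and since $x\circ y$ is continuous at $t$ whenever $x$ is continuous at $y(t)$, it suffices to fix a large $T$ where this holds and prove $d_T(x_n\circ y_n, x\circ y)\to 0$ from $d_T(x_n,x)\to 0$ and $y_n\to y$ uniformly on compacts. The latter uniform convergence is automatic because $y\in C_{\uparrow\uparrow}^+$ is continuous, and since $y$ is a continuous strictly increasing bijection onto its range, $y^{-1}$ exists and is continuous on $[0,y(T)]$; moreover $y_n^{-1}\to y^{-1}$ uniformly on any compact subset of the range of $y$.

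For the $J_1$ case, pick time changes $\lambda_n\in \Lambda_{T'}$, where $T'>y(T)$ is a continuity point of $x$, such that $\sup_{u\le T'}|\lambda_n(u)-u|\to 0$ and $\sup_{u\le T'}|x_n(\lambda_n(u))-x(u)|\to 0$. Define
\[
 \mu_n(t) = y_n^{-1}\bigl(\lambda_n(y(t))\bigr), \qquad t\in[0,T].
\]
Then $y_n(\mu_n(t)) = \lambda_n(y(t))$, so $x_n(y_n(\mu_n(t)))=x_n(\lambda_n(y(t)))\to x(y(t))$ uniformly in $t\in[0,T]$. Moreover $\mu_n\to y^{-1}\circ y=\mathrm{id}$ uniformly on $[0,T]$ by the uniform convergence of $y_n^{-1}$ and $\lambda_n$. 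After a negligible adjustment near the endpoints to make each $\mu_n$ a genuine element of $\Lambda_T$, this gives $d_T^{J_1}(x_n\circ y_n, x\circ y)\to 0$.

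For the $M_1$ case, I would work directly with parametric representations of the completed graphs. The key geometric observation is that if $(u,v)\in\Pi(x)$, then $(y^{-1}\circ u, v)\in \Pi(x\circ y)$: composition with the continuous strictly increasing $y^{-1}$ preserves the monotonicity of the first coordinate, and the jumps of $x\circ y$ occur precisely at preimages under $y$ of the jumps of $x$, with the same vertical intervals. Choose parametric representations $(u_n,v_n)\in\Pi(x_n^{(T)})$ and $(u,v)\in\Pi(x^{(T')})$ that achieve $d_T^{M_1}(x_n,x)\to 0$, so that $\sup|u_n-u|\vee\sup|v_n-v|\to 0$. Then
\[
 |y_n^{-1}(u_n(s))-y^{-1}(u(s))|\le \|y_n^{-1}-y^{-1}\|_\infty + |y^{-1}(u_n(s))-y^{-1}(u(s))|\to 0
\]
uniformly in $s$, by the uniform convergence of inverses and the uniform continuity of $y^{-1}$ on compacts. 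Hence the parametric representations $(y_n^{-1}\circ u_n, v_n)$ of $\Gamma_{x_n\circ y_n}$ converge uniformly to the parametric representation $(y^{-1}\circ u, v)$ of $\Gamma_{x\circ y}$, which is precisely $d_T^{M_1}(x_n\circ y_n, x\circ y)\to 0$.

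The hardest point, I anticipate, is verifying cleanly that $(y^{-1}\circ u,v)$ really is a parametric representation of $\Gamma_{x\circ y}$ in the sense of the paper's definition, including the natural ordering $\preceq$: this requires checking that the image traces out the entire completed graph and that the ordering is preserved through the jumps of $x\circ y$. Both properties hinge crucially on $y$ being continuous and strictly increasing, with no flat spots (else $y^{-1}$ would be ambiguous) and no jumps (else vertical segments in the graph of $x\circ y$ would appear that are not in the image). This is also precisely why neither the $J_1$ nor the $M_1$ argument admits a straightforward extension to $y\in D_{u,\uparrow}^+$, which justifies the hypothesis $y\in C_{\uparrow\uparrow}^+$.
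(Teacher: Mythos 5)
The paper does not actually prove this lemma; it is quoted from Whitt (Theorems 13.2.2 and 13.2.3 of \cite{wSPL}), so there is no in-paper argument to compare yours against. Judged on its own, your proof has a genuine gap: you repeatedly use $y_n^{-1}$, but only the \emph{limit} $y$ is assumed to lie in $C_{\uparrow\uparrow}^+$. The approaching functions $y_n$ are arbitrary elements of $D_\infty^+$ and need be neither continuous nor injective, so $y_n^{-1}$ need not exist. This is not cosmetic: the point of asserting continuity of $\psi$ on $D_\infty\times D_\infty^+$ at points of $D_\infty\times C_{\uparrow\uparrow}^+$ (rather than continuity of the restriction of $\psi$ to $D_\infty\times C_{\uparrow\uparrow}^+$) is precisely that in the application (Lemma \ref{TnTnuncouple}, via Corollary \ref{mcompcont}, whose proof takes $(\mu_n,y_n)\to(\mu,y)$ with only the limit in the subset) the inner functions $\l_\e$ are nondecreasing \emph{step} functions converging to $\l_0(t)=t/\bar\nu$. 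Your argument, as written, establishes only the restricted statement and would not support the paper's use of the lemma.

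Concretely, in the $J_1$ part the identity $y_n(\mu_n(t))=\lambda_n(y(t))$ with $\mu_n=y_n^{-1}\circ\lambda_n\circ y$ fails for a step function $y_n$: a generalized inverse gives only $y_n(\mu_n(t))\geq \lambda_n(y(t))$, with a discrepancy up to the local jump size of $y_n$, across which $x_n$ may itself have a jump. In the $M_1$ part, $(y_n^{-1}\circ u_n, v_n)$ is not a parametric representation of $\Gamma_{x_n\circ y_n}$ unless $y_n$ is continuous and strictly increasing. The repair has to exploit that $y_n\to y$ uniformly on compacts with $y$ continuous and strictly increasing, so that the maximal jump of $y_n$ on $[0,T]$ tends to $0$ and, for large $n$, no single jump of $y_n$ can straddle two separated discontinuities of $x$ (if $y_n(t-)<u_1<u_2<y_n(t)$ for fixed $u_1<u_2$, uniform convergence to the continuous $y$ forces $y(t)\leq u_1$ and $y(t)\geq u_2$, a contradiction); one then builds the time changes or parametric representations for $x_n\circ y_n$ directly, absorbing the residual error into the oscillation bounds. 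Your closing remark correctly explains why the limit $y$ must be continuous and strictly increasing, but that hypothesis cannot be transferred to the $y_n$.
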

The composition map $\psi$ induces a map $\Psi:
\mathcal{M}_1(D_\infty) \times D_\infty^+\ra \mathcal{M}_1(D_\infty)$
by 
\[
 \Psi(\mu,y)(\{x: \, x \in A \}) = \mu( \{x: \, x \circ y \in A \} ) .
\] 
Lemma \ref{compcont} leads to the following continuity result for $\Psi$. 
\begin{cor}\label{mcompcont}
 The map $\Psi$ is continuous on the set $\mathcal{M}_1(D_\infty)
 \times C_{\uparrow\uparrow}^+$, if the same topology (either $J_1$ or
 $M_1$) is used throughout. 
\end{cor}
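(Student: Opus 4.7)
The plan is to derive Corollary \ref{mcompcont} from Lemma \ref{compcont} via Skorohod's representation theorem. Fix a convergent sequence $(\mu_n, y_n) \to (\mu, y)$ in $\mathcal{M}_1(D_\infty) \times D_\infty^+$ with $y \in C_{\uparrow\uparrow}^+$; the goal is to show $\Psi(\mu_n, y_n) \to \Psi(\mu, y)$ weakly in $\mathcal{M}_1(D_\infty)$.

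Since $D_\infty$ endowed with either Skorohod metric is a Polish space, the weak convergence $\mu_n \to \mu$ can be realised almost surely: there exist random elements $X_n, X$ on a common probability space, with respective laws $\mu_n$ and $\mu$, such that $X_n \to X$ almost surely in $D_\infty$. Combined with the deterministic convergence $y_n \to y$, this yields $(X_n, y_n) \to (X, y)$ almost surely in $D_\infty \times D_\infty^+$. Because $y \in C_{\uparrow\uparrow}^+$, Lemma \ref{compcont} asserts that the composition map $\psi$ is continuous at every pair $(x, y)$, regardless of what $x$ is; in particular, it is continuous at every pair $(X(\omega), y)$. Hence $X_n \circ y_n \to X \circ y$ almost surely in $D_\infty$, and passing to laws gives $\Psi(\mu_n, y_n) \to \Psi(\mu, y)$ in $\mathcal{M}_1(D_\infty)$. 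The same argument works verbatim under either the $J_1$ or the $M_1$ topology, because Lemma \ref{compcont} is valid in both.

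There is no substantive obstacle here: Corollary \ref{mcompcont} is essentially the pushforward version of Lemma \ref{compcont}. The only two things to verify are (i) the availability of Skorohod's representation theorem, which follows from the separability and completeness of $D_\infty$ under either Skorohod metric (already noted in the excerpt), and (ii) the fact that Lemma \ref{compcont} gives \emph{joint} continuity at every pair $(x, y)$ with $y \in C_{\uparrow\uparrow}^+$, with no restriction on $x$ — precisely what the lemma states. If one wanted to bypass Skorohod representation entirely, one could argue directly from the Portmanteau characterisation: for any bounded continuous $f : D_\infty \to \R$ the functions $g_n(x) := f(x \circ y_n)$ converge \emph{continuously} to $g(x) := f(x \circ y)$ by Lemma \ref{compcont}, and a standard extension of the continuous mapping theorem for continuously convergent, uniformly bounded integrands (Theorem 5.5 in \cite{bCOPM}) gives $\int g_n \, d\mu_n \to \int g \, d\mu$, which is the desired weak convergence.
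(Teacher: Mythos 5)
Your proposal is correct and follows essentially the same route as the paper: Skorohod representation to upgrade $\mu_n \Rightarrow \mu$ to almost sure convergence $X_n \to X$, then Lemma \ref{compcont} applied pointwise to get $X_n \circ y_n \to X \circ y$ a.s., and finally passing back to laws. The alternative Portmanteau/continuous-convergence argument you sketch is a valid bypass but is not needed.
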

\begin{proof}
 Suppose that $(\mu_n,y_n) \ra (\mu,y) \in \mathcal{M}_1(D_\infty) \times C_{\uparrow\uparrow}^+$. 
By the Skorohod representation theorem (e.g. Theorem 3.2.2 in
\cite{wSPL}), there are $D_\infty$-valued random elements $(X_n),X$
defined on a common probability space such that $X_n\sim \mu_n$ for
each $n$, $X\sim \mu$, and $X_n\to X$ a.s. in the corresponding
Skorohod metric. By Lemma \ref{compcont} we know that $X_n\circ y_n\to
X\circ y$ in the same metric. Since a.s. convergence implies
weak convergence, the claim follows. 
\end{proof}


\subsection{Deducing weak convergence of  random probability measures}

In order to prove weak convergence of  a sequence of random
probability measures on $ D_\infty$  we will often use the coupling
technique which we now describe. Suppose that $\mu,\pi \in \mathcal{M}_1(D_\infty)$. Then a
\emph{coupling} of $\mu$ and $\pi$ is a probability measure $\theta$
on the product space $D_\infty \times D_\infty$ with marginals $\mu$ and
$\pi$, respectively.  A coupling of two random probability measures on
$D_\infty$ defined on a common probability space is a random element
of $\mathcal{M}_1(D_\infty\times D_\infty) $ defined on the same
probability space that couples the two measures for every $\omega$. 
The following simple lemma, which is a path
space extension of Lemma 3.1 in \cite{psWQLTn}, is the key ingredient
in our approach. 
\begin{lem}\label{WAEcor}
 Suppose that $(\mu_n), (\pi_n)$ are two sequences of random elements
 in $\mathcal{M}_1(D_\infty)$ defined on a common probability space
 with probability measure $\Pv$ and expectation $\Ev$. Suppose that
 one of the following conditions holds. 
\begin{enumerate}
 \item $\lim_{n \ra \infty} \Pv( \rho^{M_1}( \mu_n , \pi_n ) \geq \eta ) = 0$, for all $\eta>0$. \label{rhoM}
 \item For each $n$ there exists a coupling $\theta_n$ of the random probability measures $\mu_n$ and $\pi_n$ such that \label{annealed}
\[
 \lim_{n\ra\infty} \Ev\left[ \theta_n ( \{ (x,y): \,
   d_\infty^{M_1}(x,y) \geq \eta \} ) \right] = 0, \quad \text{for
   all} \ \eta>0. 
\]
 \item For each $n$ there exists a coupling $\theta_n$ of the random probability measures $\mu_n$ and $\pi_n$ such that \label{Wass}
\[
 \lim_{n\ra\infty} \Pv( E_{\theta_n} [ d_\infty^{M_1}(x,y) ] \geq \eta
 ) = 0, \quad \text{for all} \ \eta>0,
\]
where $E_{\theta_n}$ denotes expectations under the measure $\theta_n$. 
\end{enumerate}
If $\mu_n \Lra \mu$ weakly in $(\mathcal{M}_1(D_\infty),\rho^{M_1})$, then $\pi_n
\Lra \mu$ weakly in $(\mathcal{M}_1(D_\infty) ,\rho^{M_1})$. 
\end{lem}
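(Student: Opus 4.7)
The plan is to show the implications $(\ref{Wass}) \Rightarrow (\ref{annealed}) \Rightarrow (\ref{rhoM})$, and then to derive the conclusion from condition $(\ref{rhoM})$ together with the assumed weak convergence $\mu_n \Lra \mu$. All three reductions are soft: they rely only on Markov's inequality, the definition of Prohorov's metric, and the fact that $\bigl(\mathcal{M}_1(D_\infty),\rho^{M_1}\bigr)$ is a Polish space.

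First I would reduce $(\ref{Wass})$ to $(\ref{annealed})$. Fix $\eta>0$. Apply Markov's inequality inside each coupling $\theta_n$:
\[
\theta_n\bigl(\{(x,y):\, d_\infty^{M_1}(x,y)\geq \eta\}\bigr)\leq \frac{1}{\eta}\, E_{\theta_n}\bigl[d_\infty^{M_1}(x,y)\bigr].
\]
Because the $d_\infty^{M_1}$-metric is bounded by $1$, the random variable $\theta_n(\{d_\infty^{M_1}\geq \eta\})$ is bounded by $1$. Convergence in probability of the right-hand side, together with boundedness, gives convergence in mean, and $(\ref{annealed})$ follows. Next, to pass from $(\ref{annealed})$ to $(\ref{rhoM})$, for any Borel $A\subset D_\infty$ and $\eta>0$ use the marginal property of $\theta_n$:
\[
\mu_n(A)=\theta_n(A\times D_\infty)\leq \theta_n\bigl(\{(x,y):\,y\in A^{\eta,M_1}\}\bigr)+\theta_n\bigl(\{(x,y):\,d_\infty^{M_1}(x,y)\geq \eta\}\bigr)\leq \pi_n(A^{\eta,M_1})+R_n(\eta),
\]
where $R_n(\eta)=\theta_n(\{d_\infty^{M_1}\geq \eta\})$; by symmetry the same inequality holds with $\mu_n$ and $\pi_n$ interchanged. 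Consequently $\rho^{M_1}(\mu_n,\pi_n)\leq \max\{\eta,R_n(\eta)\}$, and another application of Markov's inequality to $(\ref{annealed})$ yields $R_n(\eta)\to 0$ in $\Pv$-probability. Taking $\eta$ arbitrarily small gives $(\ref{rhoM})$.

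It then remains to show that condition $(\ref{rhoM})$ alone is enough. Since $\bigl(\mathcal{M}_1(D_\infty),\rho^{M_1}\bigr)$ is a complete separable metric space (Theorem 6.8 in \cite{bCOPM}), the random probability measures $\mu_n,\pi_n$ are random elements of this Polish space. The standard Slutsky-type result in a metric space says that whenever $X_n \Lra X$ and $d(X_n,Y_n)\to 0$ in probability, one has $Y_n\Lra X$; this is proved by writing, for any bounded uniformly continuous $F:\mathcal{M}_1(D_\infty)\to \R$,
\[
\bigl|\Ev F(\pi_n)-\Ev F(\mu_n)\bigr|\leq \Ev\bigl|F(\pi_n)-F(\mu_n)\bigr|,
\]
and splitting the expectation on the right according to whether $\rho^{M_1}(\mu_n,\pi_n)<\delta$ or not, using the uniform continuity of $F$ on the first event and the boundedness of $F$ together with $(\ref{rhoM})$ on the second. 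Letting $n\to\infty$ and then $\delta\to 0$ and invoking the Portmanteau theorem for Polish-space-valued random elements yields $\pi_n\Lra \mu$, completing the proof.

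I do not anticipate serious obstacles here: the only point deserving care is that the couplings $\theta_n$ are themselves random measures, so each quantity $\theta_n(\{d_\infty^{M_1}\geq \eta\})$ and $E_{\theta_n}[d_\infty^{M_1}]$ is a random variable on the underlying probability space; the two layers of averaging (inner via $\theta_n$, outer via $\Pv$) must be applied in the right order. Boundedness of $d_\infty^{M_1}$ and of Prohorov distances makes all the dominated-convergence and Markov-inequality steps automatic.
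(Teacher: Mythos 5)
Your proof is correct and follows essentially the same route as the paper: the chain of implications $(3)\Rightarrow(2)\Rightarrow(1)$ via Markov's inequality and the definition of the Prohorov metric, followed by the standard converging-together argument. The only difference is that you prove the converging-together lemma from scratch with uniformly continuous test functions, whereas the paper simply cites Theorem 3.1 of Billingsley; your inlined version is a correct proof of that cited result.
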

\begin{proof}
Under condition \eqref{rhoM} the statement follows from Theorem 3.1 in
\cite{bCOPM}. 
Next, note that the definition of the Prohorov metric implies that if $\theta_n( \{ (x,y) : d_\infty^{M_1} (x,y) \geq \eta \} ) \leq \eta$ then $\rho^{M_1}( \mu_n, \pi_n) \leq \eta$. Therefore, 
\begin{align*}
 \Pv( \rho^{M_1}( \mu_n , \pi_n ) > \eta ) 
&\leq \Pv( \theta_n ( \{(x,y):\,  d_\infty^{M_1}(x,y) \geq \eta \}) > \eta ) \\
&\leq \frac{1}{\eta} \Ev \left[ \theta_n ( \{(x,y):\,  d_\infty^{M_1}(x,y) \geq \eta \}) \right]. 
\end{align*}
Thus, condition \eqref{annealed} implies condition \eqref{rhoM}.
Furthermore, condition \eqref{Wass} implies  condition
\eqref{annealed} by Chebyshev's inequality. 
\end{proof}

The following lemma will allow us to reduce checking condition
\eqref{Wass} in Lemma \ref{WAEcor} to the finite time situation. We
note that a similar reduction holds under the metrics $d^{J_1}$ and
$d^U$ as well. 
\begin{lem}\label{Dt2Dinfty}
 Suppose that $(\mu_n), (\pi_n)$ are two sequences of random elements
 in $\mathcal{M}_1(D_\infty)$ defined on a common probability space
 with probability measure $\Pv$ and expectation $\Ev$. If for each $n$
 there exists a coupling $\theta_n$ of the random probability measures
 $\mu_n$ and $\pi_n$ such that for every $0<t<\infty$ 
\be \label{Dtconv}
  \lim_{n\ra\infty} \Pv( E_{\theta_n} [ d_t^{M_1}(x^{(t)},y^{(t)}) ] \geq \eta ) =
  0, \quad \forall \eta>0,  
\ee
then condition \eqref{Wass} in Lemma \ref{WAEcor} holds. 
\end{lem}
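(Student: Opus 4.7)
The plan is to exploit the explicit form of the metric,
\[
 d^{M_1}_\infty(x,y) = \int_{0}^{\infty} e^{-t} \left( d^{M_1}_t(x^{(t)},y^{(t)}) \wedge 1 \right) \, dt,
\]
together with Fubini's theorem, to bring the $\theta_n$-expectation inside the $t$-integral, thereby reducing the infinite-time estimate to the finite-time estimates that the hypothesis \eqref{Dtconv} gives us. The key observation is that the integrand $d_t^{M_1}(x^{(t)},y^{(t)})\wedge 1$ is bounded by $1$, so everything in sight is uniformly integrable and standard bounded convergence arguments apply.

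First, I would truncate the integral: given $\eta>0$, choose $T<\infty$ so large that $\int_T^\infty e^{-t}\,dt < \eta/2$. Using Fubini (legal since the integrand is non-negative) and the pointwise bound $d_t^{M_1}\wedge 1 \leq 1$, we obtain
\[
 E_{\theta_n}\bigl[d_\infty^{M_1}(x,y)\bigr]
 \;\leq\; I_n + \tfrac{\eta}{2},
 \qquad\text{where}\qquad
 I_n \;:=\; \int_0^T e^{-t}\, E_{\theta_n}\bigl[d_t^{M_1}(x^{(t)},y^{(t)})\wedge 1\bigr]\, dt.
\]
So it suffices to prove $\Pv(I_n \geq \eta/2) \to 0$, and in fact I will show the stronger statement $\Ev[I_n]\to 0$, which by Markov's inequality is more than enough.

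To see that $\Ev[I_n]\to 0$, fix $t\in[0,T]$. Hypothesis \eqref{Dtconv} says $E_{\theta_n}[d_t^{M_1}(x^{(t)},y^{(t)})]\to 0$ in $\Pv$-probability, hence so does the truncated quantity $E_{\theta_n}[d_t^{M_1}(x^{(t)},y^{(t)})\wedge 1]$; since this is bounded by $1$, the bounded convergence theorem gives
\[
 \Ev\bigl[E_{\theta_n}[d_t^{M_1}(x^{(t)},y^{(t)})\wedge 1]\bigr] \;\longrightarrow\; 0 \qquad \text{for each } t\in[0,T].
\]
Now a second application of bounded convergence in the variable $t$ (the integrand is dominated by the integrable function $e^{-t}$) together with Fubini yields
\[
 \Ev[I_n] \;=\; \int_0^T e^{-t}\, \Ev\bigl[E_{\theta_n}[d_t^{M_1}(x^{(t)},y^{(t)})\wedge 1]\bigr]\, dt \;\longrightarrow\; 0.
\]
Combining with the truncation bound gives $\Pv\bigl(E_{\theta_n}[d_\infty^{M_1}(x,y)]\geq \eta\bigr) \leq \Pv(I_n \geq \eta/2) \to 0$, which is exactly condition \eqref{Wass} of Lemma \ref{WAEcor}.

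I do not expect any serious obstacle here. The only mild subtlety is making sure the double application of bounded convergence (once in $\omega$, once in $t$) is legitimate, but the uniform bound $d_t^{M_1}\wedge 1 \leq 1$ together with the integrable weight $e^{-t}$ makes this routine. The same argument, verbatim, would work for the $d^{J_1}$ and $d^U$ metrics, as the only feature used is the integral form of $d_\infty^\bullet$ with a uniformly bounded integrand.
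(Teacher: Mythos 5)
Your argument is correct and is essentially the paper's own proof: both reduce the claim to showing $\Ev\bigl[E_{\theta_n}[d_\infty^{M_1}(x,y)]\bigr]\to 0$ via Fubini, bounded convergence in $\omega$ for each fixed $t$, and dominated convergence in $t$ (the weight $e^{-t}$ with integrand bounded by $1$), finishing with Markov's inequality. Your truncation of the time integral at $T$ is harmless but unnecessary, since dominated convergence already handles the full integral over $[0,\infty)$.
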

\begin{proof}
 By the bounded convergence theorem, \eqref{Dtconv} implies that 
$$
 \lim_{n\ra\infty} \Ev\left[ E_{\theta_n}[ d_t^{M_1}(x^{(t)},y^{(t)}) ] \wedge 1
 \right] = 0, \quad \forall t<\infty.  
$$
By the definition of $d_\infty^{M_1}$, Fubini's Theorem and dominated
convergence theorem we immediately see that
\begin{align*}
 \Ev\left[ E_{\theta_n}[ d_\infty^{M_1}(x,y) ] \right]
&= \Ev\left[ E_{\theta_n}\left[ \int_0^\infty e^{-t} \left(d_t^{M_1}(x^{(t)},y^{(t)}) \wedge 1 \right) \, dt \right] \right] \\
&= \int_0^\infty e^{-t} \Ev\left[ E_{\theta_n}\left[ \left(d_t^{M_1}(x^{(t)},y^{(t)}) \wedge 1 \right)  \right] \right] \, dt
\end{align*}
vanishes as $n\ra \infty$. This implies condition \eqref{Wass} in
Lemma \ref{WAEcor}.  
\end{proof}

\section{Comparison with sums of exponentials}\label{CompareWithExp}

The main goal of this section is to reduce the study of the hitting
time process $\T_\e$ to the study of a process $\S_\e$ that is defined
in terms of sums of exponential random variables. To this end, recall
the definition of the ladder locations of the environment in
\eqref{nudef} and the notation $\b_i = \b_i(\w) = E_\w[ T_{\nu_i} - T_{\nu_{i-1}} ]$ for the quenched expectation of the time to cross from $\nu_{i-1}$ to $\nu_i$. 
Also, we expand the measure $P_\w$ to include an i.i.d. sequence of
standard exponential random variables $(\tau_i)$; it will be used in
the coupling procedure below by comparing $T_{\nu_i} - T_{\nu_{i-1}}$ with $\b_i \tau_i$. 

For any realization of the environment we construct random paths
$\U_\e, \S_\e \in D_\infty$ as follows. For $t\geq 0$, 
\[
 \U_\e(t) =
\begin{cases}
 \e^{1/\k} T_{\nu_{\fl{t/\e}}} & \k \in(0,1) \\
 \e ( T_{\nu_{\fl{t/\e}}} - t/\e D'(1/\e) ) & \k = 1 \\
 \e^{1/\k} ( T_{\nu_{\fl{t/\e}}} - \bar\b t/\e ) & \k \in (1,2),
\end{cases}
\]
and 
\be\label{Sdef}
 \S_\e(t) = 
\begin{cases}
 \e^{1/\k} \sum_{i=1}^{\lfloor t/\e\rfloor} \b_i \tau_i & \k \in(0,1) \\
 \e ( \sum_{i=1}^{\lfloor t/\e\rfloor} \b_i \tau_i - t/\e D'(1/\e) ) & \k = 1 \\
 \e^{1/\k} ( \sum_{i=1}^{\lfloor t/\e\rfloor} \b_i \tau_i - \bar\b t/\e ) & \k \in (1,2),
\end{cases}
\ee
where  $D'(x) = E_Q[ \b_1 \ind{\b_1 \leq x}] \sim \tc \log x$ when $\k=1$ and $\bar\b = E_Q[\b_1] = E_Q[ E_\w T_\nu ] $ when $\k \in (1,2)$.
\begin{rem}
The proof below will show that in the case $\k=1$, the function $D$ in
definition of $\T_\e$ can be chosen to be $D(x)=D'(x)/\bar\nu$ (recall
that $\bar\nu = E_Q[\nu_1]$.) In particular, the constant $A$ in Theorem
\ref{averagedlimlaw} satisfies $A = \tc/\bar\nu$. 
\end{rem}
Let $u_{\e}=u_{\e,\w}, s_{\e}=s_{\e,\w} \in \mathcal{M}_1(D_\infty)$ be the quenched
distributions of $\U_\e$ and $\S_\e$, respectively. That is,  
\[
 u_{\e,\w} = P_\w( \U_\e \in \cdot ), \quad\text{and}\quad s_{\e,\w} = P_\w(\S_\e \in \cdot). 
\]
We view $u_{\e}$ and $s_{\e} $ as random elements in $\mathcal{M}_1(D_\infty)$. The proof of Theorem \ref{WQLTn1} is
accomplished via the following two propositions.  The first
proposition establishes  weak convergence of $s_\e$ in
$\mathcal{M}_1(D_\infty)$. The notation and the terminology are the
same as in Theorem \ref{WQLTn1}. 
\begin{prop}\label{WQLSn} Let $\l = \tc \k$, where  $\tc$ is the tail
  constant in \eqref{btail}.  The following statements hold under the
  probability measure $Q$ on the environments. 
\begin{enumerate}
\item If $\k\in(0,1)$, then $s_{\e}
  \Lra \H(N_{\l,\k})$.  
\item If $\k=1$, then 
\[
 s_{\e} \Lra \lim_{\d\ra 0} \H_\d(N_{\l,1}) * \lm(-\l \log(1/\d)). 
\]
\item If $\k\in(1,2)$, then 
\[
 s_{\e} \Lra \lim_{\d\ra 0} \H_\d(N_{\l,\k}) * \lm(-\l
 \d^{-\k+1}/(\k-1)).  
\]
\end{enumerate}
\end{prop}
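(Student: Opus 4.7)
The strategy is to identify the quenched law $s_{\e,\w}$ directly as (the image under the map $\H$ of) a rescaled point process built from the environment, and then drive convergence in $\mathcal{M}_1(D_\infty)$ by the classical heavy-tailed point process convergence. Concretely, for each realization of the environment under $Q$ define the $\mathcal{M}_p((0,\infty]\times[0,\infty))$-valued random element
\[
 N_\e \;=\; \sum_{i\geq 1} \d_{(\e^{1/\k}\b_i,\, i\e)}.
\]
Because $\{\b_i\}_{i\geq 1}$ is i.i.d.\ under $Q$ (the conditioning defining $Q$ only affects the negative half-line) with tail \eqref{btail}, $N_\e \Lra N_{\l,\k}$ in $\mathcal{M}_p$ with $\l=\tc\k$, by a standard point-process limit (e.g.\ \cite{rEVRVPP}, Proposition 3.21). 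This is the input from the environment; everything else is driven by the i.i.d.\ exponentials $\{\tau_i\}$.

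For $\k\in(0,1)$, a direct inspection of \eqref{Sdef} shows $\S_\e(t)=\sum_{i\geq 1}(\e^{1/\k}\b_i)\tau_i\ind{i\e\le t}=W(N_\e,\vec\tau)(t)$, so $s_{\e,\w}=\H(N_\e)$ identically. Invoking Skorohod's representation theorem for the convergence $N_\e\Lra N_{\l,\k}$, it then suffices to show that if $\zeta_n\to\zeta$ in $\mathcal{M}_p$ with $\zeta\in\mathcal{M}_p^f((0,\infty]\times[0,\infty))$ having $W(\zeta,\vec\tau)(t)<\infty$ for all $t$ a.s., then $\H(\zeta_n)\to\H(\zeta)$ in $\mathcal{M}_1((D_\infty,d_\infty^{M_1}))$. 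For this I would, on the probability space of $\vec\tau$, couple the enumerations of $\zeta_n$ and $\zeta$ so that pointwise $(x_i^n,t_i^n)\to(x_i,t_i)$, pass to $W_\d(\zeta_n,\vec\tau)\to W_\d(\zeta,\vec\tau)$ (finite sums, easy in $M_1$), and then use the proposition in Section~2 of \cite{kRPWOD2} to let $\d\to 0$; closing the triangle with Lemma \ref{WAEcor}(\ref{Wass}) gives convergence of the quenched laws.

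For $\k\in[1,2)$ the same identification holds after truncation and centering. For any $\d>0$ split
\[
 \S_\e(t)\;=\;\Bigl[\sum_{i\ge 1}\e^{1/\k}\b_i\tau_i\ind{\e^{1/\k}\b_i>\d,\,i\e\le t}\Bigr]+\Bigl[\sum_{i\ge 1}\e^{1/\k}\b_i\tau_i\ind{\e^{1/\k}\b_i\le\d,\,i\e\le t}\Bigr]-c_\e t,
\]
where $c_\e=D'(1/\e)$ (resp.\ $\bar\b/\e^{1-1/\k}$) for $\k=1$ (resp.\ $\k\in(1,2)$). The first bracket is exactly $W_\d(N_\e,\vec\tau)$, whose quenched law is $\H_\d(N_\e)$. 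Writing $m_\e(\d):=\e^{1/\k}\,t\,\e^{-1}E_Q[\b_1\tau_1\ind{\e^{1/\k}\b_1\le\d}]$, the centered small-jump part
\[
 R_{\e,\d}(t)\;:=\;\sum_{i\ge 1}\e^{1/\k}\b_i\tau_i\ind{\e^{1/\k}\b_i\le\d,\,i\e\le t}-c_\e t
\]
decomposes as (small-jump sum $-$ its $\tau$-conditional mean given $\w$) $+$ (conditional mean $-$ deterministic linear $-c_\e t$). Using the tail \eqref{btail} with Karamata, the deterministic piece tends, as $\e\to 0$, to $-\l t\log(1/\d)$ when $\k=1$ and to $-\l t\d^{-\k+1}/(\k-1)$ when $\k\in(1,2)$, uniformly on compact $t$. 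The random (mean-zero) piece is controlled by a quenched second-moment computation:
\[
 E_\w\bigl[R_{\e,\d}(t)-E_\w R_{\e,\d}(t)\bigr]^2\;\le\;\e^{2/\k}\sum_{i=1}^{\lfloor t/\e\rfloor}\b_i^2\ind{\e^{1/\k}\b_i\le\d},
\]
whose $Q$-expectation is $O(t\,\d^{2-\k})$ if $\k<2$, and so vanishes as $\d\to 0$, uniformly in $\e$. Thus condition \eqref{Wass} of Lemma \ref{WAEcor}, combined with the reduction to compact time intervals from Lemma \ref{Dt2Dinfty}, allows us to replace $s_\e$ by $\H_\d(N_\e)*\lm(\text{drift}_\d)$ at an error that is uniform in $\e$ and vanishes as $\d\to 0$.

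Finally, for each fixed $\d>0$, the map $\zeta\mapsto\H_\d(\zeta)$ is continuous at points of $\mathcal{M}_p^f$ with no mass on $\{\e^{1/\k}\b_i=\d\}$ (a $Q$-null condition at the limit), so the $\k\in(0,1)$ argument applies verbatim to give $\H_\d(N_\e)\Lra\H_\d(N_{\l,\k})$, and consequently $\H_\d(N_\e)*\lm(\text{drift}_\d)\Lra\H_\d(N_{\l,\k})*\lm(\text{drift}_\d)$. Sending $\d\to 0$ along the existing limit (whose existence for $N_{\l,\k}$ is the content of the proposition in Section~2 of \cite{kRPWOD2}) yields the claim. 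The principal obstacle is controlling the small-jump contribution when $\k=1$, where the centering is only logarithmic; the bounds above are just tight enough, but they must be made uniform in $\e$ and $t\le T$ on each compact time interval before one can interchange the $\e\to 0$ and $\d\to 0$ limits via the two-parameter approximation scheme of Lemma \ref{WAEcor}.
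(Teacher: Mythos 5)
Your overall architecture matches the paper's: identify $s_{\e,\w}$ with $\H(N_\e)$ (or $\H_\d(N_\e)$ plus a drift) for the point process $N_\e=\sum_i\d_{(\e^{1/\k}\b_i,\e i)}$, prove $N_\e\Lra N_{\l,\k}$, use continuity of $\H_\d$, and close with a two-parameter approximation (uniform-in-$\e$ control of the truncation error as $\d\to0$). However, there are two genuine gaps, both traceable to the same false premise.

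First, the $\b_i$ are \emph{not} i.i.d.\ under $Q$; they are only stationary. By \eqref{bform}, $\b_i=\nu_i-\nu_{i-1}+2\sum_{j=\nu_{i-1}}^{\nu_i-1}W_j$ with $W_j=\sum_{k\le j}\Pi_{k,j}$, so $\b_i$ depends on the entire environment to the left of $\nu_{i-1}$, not just on the $i$-th inter-ladder block (only the blocks themselves are i.i.d.). Consequently the convergence $N_\e\Lra N_{\l,\k}$ cannot be cited as a standard i.i.d.\ point-process limit from \cite{rEVRVPP}; it is the content of Lemma \ref{Newlim} in the paper, which requires constructing finite-range-dependent approximations $\b_i^{(\e)}$, controlling clustering of large values via the quantity $K_\e(\tau)$, and passing through Laplace functionals. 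This step is a substantial part of the proof and is missing from your argument.

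Second, in the case $\k\in[1,2)$ you call the piece (conditional mean given $\w$) $-\,c_\e t$ ``deterministic.'' It is not: the $\tau$-conditional mean of the small-jump sum is $\e^{1/\k}\sum_{i\le t/\e}\b_i\ind{\e^{1/\k}\b_i\le\d}$, a random function of the environment. Karamata only identifies the limit of its $Q$-\emph{expectation}; you still must show that
\[
\sup_{t\le s}\Bigl|\e^{1/\k}\sum_{i=1}^{\fl{t/\e}}\bigl\{\b_i\ind{\e^{1/\k}\b_i\le\d}-E_Q[\b_1\ind{\e^{1/\k}\b_1\le\d}]\bigr\}\Bigr|
\]
is small in $Q$-probability, uniformly in $\e$, as $\d\to0$. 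Because the $\b_i$ are dependent, this is not a routine variance bound plus Kolmogorov's maximal inequality; it is the term \eqref{bprocess}, and the paper devotes Lemma \ref{smallblem} and all of Appendix \ref{ProcessApp} to it, exploiting the exponential decay of $\Pi_{0,\nu-1}$ and the exponential tails of $W_{-1}$ to obtain conditional moment bounds given $\mathcal{F}_m=\mathcal{G}_{\nu_m-1}$. Your quenched second-moment computation handles only the fluctuation in $\vec\tau$, not this fluctuation in $\w$. (A minor further point: continuity of $\H_\d$ requires not only no mass on the line $\{\d\}\times[0,\infty)$ but also the condition that the limiting point process has at most one atom per time coordinate and none at $t=0$, as in the set $E$ of Lemma \ref{Hdcont}; this holds a.s.\ for $N_{\l,\k}$ but should be stated.)
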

The second proposition relates a weak limit of $s_\e$ in
$\mathcal{M}_1(D_\infty)$ to the corresponding weak limit of $m_\e$.
\begin{prop}\label{mnsncompare}
Define $\l_0\in C_{\uparrow\uparrow}^+$ by 
$\l_0(t) = t/\bar\nu$. If $s_{\e}\Lra\mu$ weakly in
$\mathcal{M}_1(D_\infty)$ under $Q$, then $m_{\e} \Lra \Psi(\mu,\l_0)$
under $P$. 
\end{prop}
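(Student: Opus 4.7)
The plan is to chain together two comparisons: first replace $\T_\e$ by a deterministic time-change $\U_\e\circ\l_0$ of the ladder-location hitting times, then replace $\U_\e$ by the sum-of-exponentials process $\S_\e$. Since $\l_0\in C_{\uparrow\uparrow}^+$ is deterministic, Corollary \ref{mcompcont} will propagate weak convergence through the map $\mu\mapsto\Psi(\mu,\l_0)$, and Lemma \ref{WAEcor} will close the argument.

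I would first transfer the hypothesis from $Q$ to $P$. The paths $\T_\e$, $\U_\e$, $\S_\e$ depend only on $(\w_x)_{x\geq 0}$ and the exponentials $(\tau_i)$; since $Q$ is obtained from $P$ by conditioning on an event in $\sigma(\w_x:\,x<0)$, the laws of $m_\e$, $u_\e$ and $s_\e$ coincide under $P$ and $Q$, so $s_\e\Lra\mu$ holds under $P$ as well. Next I would prove $u_\e\Lra\mu$ under $P$ by coupling $\U_\e$ and $\S_\e$ on the common space carrying both the random walk under $P_\w$ and $(\tau_i)$: the crossing time $T_{\nu_i}-T_{\nu_{i-1}}$ has quenched mean $\b_i$, and it can be matched with $\b_i\tau_i$ using a control on its quenched variance together with an exponential approximation $(T_{\nu_i}-T_{\nu_{i-1}})/\b_i \approx \tau_i$ for large $\b_i$ (both ingredients available from the prior one-dimensional RWRE analyses in \cite{pzSL1,psWQLTn}). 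This yields $E_\w[d_t^{M_1}(\U_\e,\S_\e)]\to 0$ in $P$-probability for each $t<\infty$, so Lemmas \ref{Dt2Dinfty} and \ref{WAEcor} give $u_\e\Lra\mu$.

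Second, I would couple $\T_\e$ with $\U_\e\circ\l_0$ on the same $P_\w$-walk. Writing $n_\e=\fl{t/\e}$,
\[
\T_\e(t)-(\U_\e\circ\l_0)(t)=\e^{1/\k}\bigl(T_{n_\e}-T_{\nu_{\fl{n_\e/\bar\nu}}}\bigr)+R_\e(t),
\]
where $R_\e(t)$ collects the centering terms. For $\k=1$, $R_\e\equiv 0$ thanks to the identity $D(x)=D'(x)/\bar\nu$; for $\k\in(1,2)$, $R_\e\equiv 0$ thanks to the identity $\bar\b=\bar\nu/\vp$ from the lemma at the end of Section \ref{Notation}; for $\k\in(0,1)$ no centering is present. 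The random remainder is handled via the strong law $\nu_n/n\to\bar\nu$, which forces $|\nu_{\fl{n_\e/\bar\nu}}-n_\e|$ to be of smaller order than $n_\e$: the large $\b_i$'s produce matched jumps on both sides up to $M_1$-compatible temporal shifts of order $\e\cdot(\nu_i-\bar\nu i)$, while the typical (light-tailed) crossings accumulate to a size much less than $\e^{-1/\k}$ since $1/\k>1/2$. This gives $E_\w[d_t^{M_1}(\T_\e,\U_\e\circ\l_0)]\to 0$ in $P$-probability, i.e., condition (3) of Lemma \ref{WAEcor}.

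Finally, because $\l_0$ is a deterministic element of $C_{\uparrow\uparrow}^+$, Corollary \ref{mcompcont} upgrades $u_\e\Lra\mu$ to $\Psi(u_\e,\l_0)\Lra\Psi(\mu,\l_0)$, and a further application of Lemma \ref{WAEcor} concludes $m_\e\Lra\Psi(\mu,\l_0)$ under $P$. I expect the hard part to be the $\U_\e$--$\S_\e$ comparison: showing that the true hitting-time blocks match the exponential approximations $\b_i\tau_i$ both in bulk and on the heavy-tail scale demands delicate quenched variance estimates together with a quenched exponential approximation in the regime $\b_i\to\infty$. Using the $M_1$ (rather than $J_1$) topology is essential in the second step, so that the small temporal misalignments between jumps of $\T_\e$ and $\U_\e\circ\l_0$ caused by the CLT-sized fluctuations of $\nu_i-\bar\nu i$ can be absorbed.
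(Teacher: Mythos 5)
Your overall architecture matches the paper's: couple $\U_\e$ with $\S_\e$ via the martingale/variance estimates of \cite{psWQLTn}, compare $\T_\e$ with a time-changed $\U_\e$ in the $M_1$ metric, and close with Lemma \ref{WAEcor} and Corollary \ref{mcompcont}. However, there is a genuine gap in your very first step. You assert that $\T_\e$, $\U_\e$ and $\S_\e$ ``depend only on $(\w_x)_{x\geq 0}$'' and that therefore the laws of $m_\e$, $u_\e$, $s_\e$ coincide under $P$ and $Q$. This is false: the walk is not reflected at the origin, so $T_n$ includes the time spent on excursions to the left of $0$, whose quenched law depends on $(\w_x)_{x<0}$. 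The same is true of the quenched means: by \eqref{bform}, $\b_i = \nu_i-\nu_{i-1}+2\sum_{j=\nu_{i-1}}^{\nu_i-1}W_j$ with $W_j=\sum_{k=-\infty}^{j}\Pi_{k,j}$, so every $\b_i$ (hence $\S_\e$ and $\U_\e$'s centering) involves the environment at negative sites. The laws under $P$ and $Q$ therefore do \emph{not} coincide, and an actual argument is required. The paper supplies it in Lemma \ref{PQcouple}: one couples $\w\sim P$ with $\w'\sim Q$ so that they agree on $\Z_{\geq 0}$, couples the two walks step-by-step at nonnegative sites, and observes that $\sup_{n}|T_n-T_n'|=|L-L'|$ where $L,L'$ are the total times spent to the left of $0$; these are $P_\w$-a.s.\ finite, so the discrepancy vanishes after multiplication by $\e^{1/\k}$. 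A related consequence of skipping this step is that under $P$ the sequence $(\b_i)_{i\geq 1}$ is not stationary (stationarity under shifts by ladder locations holds only under $Q$), which undermines the variance estimates you import from \cite{psWQLTn} for the $\U_\e$--$\S_\e$ comparison; this is why the paper carries out both couplings under $Q$ and transfers to $P$ only at the end.

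On the second comparison, your route differs slightly from the paper's: you compare $\T_\e$ directly with $\U_\e\circ\l_0$ using the deterministic time change, absorbing the CLT-sized misalignment $\e(\nu_k-\bar\nu k)$ into the $M_1$ distance by hand. The paper instead introduces the random time change $\l_\e(t)=\e\max\{k:\e\nu_k\leq t\}$, so that the jumps of $\U_\e\circ\l_\e$ and $\T_\e$ are exactly aligned at the times $\e\nu_j$ (the $M_1$ estimate then only has to control what happens within a single block), and handles the passage from $\l_\e$ to $\l_0$ by the continuity of composition (Corollary \ref{mcompcont}) together with the functional CLT for the renewal sequence $(\nu_k)$. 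Your version can presumably be made rigorous, but the exact alignment via $\l_\e$ is what makes the parametric-representation estimate clean, and for $\k\in[1,2)$ the paper still needs an intermediate process $\V_\e$ to reconcile the centering of $\U_\e\circ\l_\e$ (which is linear in $\l_\e(t)$, not in $t$) with that of $\T_\e$ --- a point your identity-based claim that the remainder $R_\e$ vanishes glosses over once the random time change is in play.
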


Before giving the proofs of Propositions \ref{WQLSn} and
\ref{mnsncompare}, we show how they imply Theorem \ref{WQLTn1}.  
\begin{proof}[Proof of Theorem \ref{WQLTn1}]
%
The proof is essentially the same, whether $\k \in (0,1)$, $\k=1$, or
$\k\in(1,2)$, therefore we only spell out the details in the case $\k
\in (1,2)$.   

First, note that by Propositions \ref{WQLSn} and \ref{mnsncompare} and
Corollary \ref{mcompcont}, under the measure $P$, 
\[
 m_{\e} \Lra \lim_{\d \ra 0} \Psi\left( \H_\d(N_{\l,\k}) * \lm\left(-\frac{\l \d^{-\k+1}}{\k-1}\right) , \l_0 \right).
\]
Therefore, it is enough to show that, for any $\l>0$, with $\l' =
\l/\bar\nu$, 
\be\label{rescale}
 \Psi\left( \H_\d(N_{\l,\k}) * \lm\left(-\frac{\l
       \d^{-\k+1}}{\k-1}\right) , \l_0 \right) \overset{\text{Law}}{=}
 \H_\d(N_{\l',\k}) * \lm\left(-\frac{\l' \d^{-\k+1}}{\k-1}\right).  
\ee
To see this, note that for any $m>0$, 
\[
 \Psi\left( \H_\d\left( \sum_{i\geq 1} \d_{(x_i,t_i)} \right)* \lm(-m),
  \l_0 \right) = \H_\d\left( \sum_{i\geq 1} \d_{(x_i,t_i \bar \nu)}
\right) * \lm(-m/\bar\nu). 
\]
However, if $\sum_{i\geq 1} \d_{(x_i,t_i)}$ is a Poisson point process
with intensity measure $\l x^{-\k-1} \, dx \, dt$ then $\sum_{i\geq 1}
\d_{(x_i,t_i \bar\nu)}$  is a Poisson point process with intensity
measure $ (\l/\bar\nu) x^{-\k-1} \, dx \, dt$. This implies 
\eqref{rescale}. 
\end{proof}

It remains to prove Propositions \ref{WQLSn} and \ref{mnsncompare}. 
Proposition \ref{WQLSn} will be proved in Section \ref{ExpWQL}, and in
the remainder of this section will focus on the proof of Proposition
\ref{mnsncompare} which follows immediately from Lemma \ref{WAEcor}
and the following lemmas.  
\begin{lem}\label{Expcouple}
There exists a coupling of $\U_\e$ and $\S_\e$ such that, for any $\eta>0$,
$$\lim_{\e\ra 0} Q\left( E_\w[ d_\infty^{J_1}( \U_\e, \S_\e) ] \geq \eta \right) = 0.$$
\end{lem}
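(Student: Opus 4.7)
The plan is to construct the coupling block-by-block: independently for each $i \geq 1$, couple $T_{\nu_i} - T_{\nu_{i-1}}$ and $\b_i \tau_i$ under $P_\w$ via the monotone (quantile) coupling through the common uniform $1 - e^{-\tau_i}$. Set $\xi_i := (T_{\nu_i} - T_{\nu_{i-1}}) - \b_i \tau_i$. Under $P_\w$ the $\xi_i$'s are independent with mean zero, since $E_\w[T_{\nu_i} - T_{\nu_{i-1}}] = \b_i$ and $E[\tau_i] = 1$. In each of the three cases of \eqref{Sdef}, the paths $\U_\e$ and $\S_\e$ share the same deterministic interpolation between their jumps on the grid $\{i\e\}_{i\geq 1}$ (constant for $\k \neq 1$, linear of slope $-D'(1/\e)$ for $\k = 1$), so choosing the identity time-change in the definition of $d_t^{J_1}$ yields
\[
 d_t^{J_1}(\U_\e, \S_\e) \;\leq\; \sup_{s \leq t}|\U_\e(s) - \S_\e(s)| \;=\; c_\e \max_{1 \leq n \leq \fl{t/\e}} \Bigl|\sum_{i=1}^n \xi_i\Bigr| \;=:\; M_\e(t),
\]
where $c_\e = \e^{1/\k}$ for $\k \in (0,1)\cup(1,2)$ and $c_\e = \e$ for $\k=1$.

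I would then apply Doob's $L^2$-maximal inequality (equivalently Kolmogorov's inequality) to the $P_\w$-martingale of partial sums of $\xi_i$ to obtain
\[
 E_\w[M_\e(t)^2] \;\leq\; 4 c_\e^2 \sum_{i=1}^{\fl{t/\e}} \Var_\w(\xi_i),
\]
and bound $\Var_\w(\xi_i)$ in two regimes. For small $\b_i$, use the trivial estimate $\Var_\w(\xi_i) \leq 2\Var_\w(T_{\nu_i} - T_{\nu_{i-1}}) + 2\b_i^2 \leq C\b_i^2$, which follows from the standard quenched formula $E_\w[T_\nu^2] = O(\b_1^2)$. For large $\b_i$, exploit the monotone coupling together with the polynomial rate at which the quenched distribution of $(T_{\nu_i} - T_{\nu_{i-1}})/\b_i$ converges to a standard exponential, to get a strictly smaller bound $\Var_\w(\xi_i) \leq C\b_i^{2-\gamma}$ for some $\gamma>0$.

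To pass to $Q$-probability, I would truncate at a threshold $b_\e \to \infty$: using the tail $Q(\b_1 > x) \sim \tc x^{-\k}$,
\[
 E_Q\Bigl[c_\e^2 \sum_{i \leq \fl{t/\e}}\Var_\w(\xi_i)\Bigr]
 \;\leq\; C c_\e^2 \frac{t}{\e}\Bigl(E_Q[\b_1^2;\b_1 \leq b_\e] + E_Q[\b_1^{2-\gamma};\b_1 > b_\e]\Bigr)
 \;\asymp\; c_\e^2 \frac{t}{\e}\bigl(b_\e^{2-\k} + b_\e^{2-\gamma-\k}\bigr),
\]
which vanishes for $b_\e = \e^{-\alpha}$ with $\alpha$ in a suitable open interval depending on $\k$ and $\gamma$. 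Markov's inequality then yields $E_\w[M_\e(t)\wedge 1] \to 0$ in $Q$-probability for every fixed $t<\infty$, and since this quantity is bounded by $1$, bounded convergence combined with Fubini gives
\[
 E_Q\bigl[E_\w[d_\infty^{J_1}(\U_\e,\S_\e)]\bigr] \;\leq\; \int_0^\infty e^{-t} E_Q\bigl[E_\w[M_\e(t)\wedge 1]\bigr]\,dt \;\longrightarrow\; 0,
\]
which in turn implies $E_\w[d_\infty^{J_1}(\U_\e,\S_\e)] \to 0$ in $Q$-probability by one more application of Markov.

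The main obstacle is the quantitative improvement $\Var_\w(\xi_i) = O(\b_i^{2-\gamma})$ in the large-$\b_i$ regime: the naive $O(\b_i^2)$ bound, summed over $\fl{t/\e}$ blocks and scaled by $c_\e^2$, is of order $t^{2/\k}$ and is useless. Establishing the improvement requires a careful analysis of the quenched law of $T_\nu$, via its representation as a sum of conditionally independent geometric excursion times inside a single ladder block, together with the regularity of the quantile transform against a standard exponential; this is analogous to, but technically more involved than, the one-point coupling argument in \cite{psWQLTn}.
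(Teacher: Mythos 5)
Your overall architecture matches the paper's: couple block by block so that $\U_\e-\S_\e$ is constant between grid points, observe that the supremum over $[0,s]$ equals $\e^{1/\k}$ times the maximum of the $P_\w$-martingale $M_k=T_{\nu_k}-\sum_{i\le k}\b_i\tau_i$, apply Doob's $L^2$-maximal inequality, and reduce everything to controlling $\Var_\w\bigl(M_{\cl{s/\e}}\bigr)$; the passage from finite horizons to $d_\infty^{J_1}$ is also the paper's Lemma \ref{Dt2Dinfty}. Where you diverge is in how the variance is controlled, and that is where there is a genuine gap.

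You propose the deterministic, per-environment bound $\Var_\w(\xi_i)\le C\b_i^{2-\gamma}$ whenever $\b_i$ is large, and you correctly flag it as the main obstacle --- but it is not merely unproven, it is false as a statement holding for every environment. The quantile coupling of $T_{\nu_i}-T_{\nu_{i-1}}$ with $\b_i\tau_i$ can have error $o(\b_i^2)$ only when the quenched law of $(T_{\nu_i}-T_{\nu_{i-1}})/\b_i$ is close to a standard exponential, which happens only when the block contains a single dominant trap. A block can instead have $\b_i$ large because $\nu_i-\nu_{i-1}$ is very long while the potential stays moderate; then $T_{\nu_i}-T_{\nu_{i-1}}$ is a sum of many comparable independent crossing times, hence approximately Gaussian with $\Var_\w(T_{\nu_i}-T_{\nu_{i-1}})=o(\b_i^2)$, and since $\Var_\w(\b_i\tau_i)=\b_i^2$, \emph{every} coupling satisfies
\begin{equation*}
\Var_\w(\xi_i)\;\ge\;\Bigl(\sqrt{\Var_\w(T_{\nu_i}-T_{\nu_{i-1}})}-\b_i\Bigr)^2\;\sim\;\b_i^2 .
\end{equation*}
Such blocks are exponentially rare under $Q$ (because $\nu_1$ has exponential moments), so the correct statement is necessarily probabilistic rather than pathwise; that is exactly what the paper invokes: Lemma 4.4 of \cite{psWQLTn} constructs the coupling and proves $n^{-2/\k}\Var_\w\bigl(T_{\nu_n}-\sum_{i\le n}\b_i\tau_i\bigr)\to0$ in $Q$-probability, which plugs directly into the Doob bound without ever taking a $Q$-expectation of the variance (which, as you noticed, would otherwise require truncation since $E_Q[\b_1^2]=\infty$). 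To salvage your route you would need to split under $Q$ into typical blocks (single dominant trap, where an exponential approximation with a quantitative rate holds) and rare atypical ones, at which point you are essentially re-proving that lemma. The small-$\b_i$ bound $\Var_\w(\xi_i)\le C\b_i^2$ is believable via the explicit quenched variance formulas, but note it also requires a uniform constant over environments and deserves a check.
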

Under the assumption of Proposition \ref{mnsncompare}, this lemma and
part (3) of Lemma \ref{WAEcor} will imply that $u_{\e} \Lra \mu$ in
$\mathcal{M}_1(D_\infty)$ under $Q$. 
\begin{lem}\label{TnTnuncouple}
If $u_{\e} \Lra \mu$ in $\mathcal{M}_1(D_\infty)$ under $Q$, then
$m_{\e} \Lra \Psi( \mu, \l_0 )$ in $\mathcal{M}_1(D_\infty)$ under
$Q$. 
\end{lem}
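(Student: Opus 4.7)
The plan is to deduce the conclusion from the continuous mapping theorem for the composition operator $\Psi$ combined with a coupling estimate. First, observe that $\l_0(t) = t/\bar\nu$ is continuous, strictly increasing, and nonnegative on $[0,\infty)$, so $\l_0 \in C_{\uparrow\uparrow}^+$. Corollary \ref{mcompcont} therefore applies and the hypothesis $u_\e \Lra \mu$ under $Q$ immediately yields $\Psi(u_\e,\l_0) \Lra \Psi(\mu,\l_0)$ in $(\mathcal{M}_1(D_\infty),\rho^{M_1})$. By construction, $\Psi(u_\e,\l_0)$ is the quenched distribution of $\U_\e\circ\l_0$, so it suffices to show that $m_\e$ and $\Psi(u_\e,\l_0)$ are asymptotically close in $\rho^{M_1}$ in $Q$-probability. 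By part (3) of Lemma \ref{WAEcor} combined with Lemma \ref{Dt2Dinfty}, this reduces to exhibiting a coupling of $\T_\e$ and $\U_\e\circ\l_0$ for which $E_\w\bigl[d_s^{M_1}(\T_\e, \U_\e\circ\l_0)\bigr]$ vanishes in $Q$-probability for every $s<\infty$.

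The natural coupling takes both $\T_\e$ and $\U_\e\circ\l_0$ to be functions of the same realization of the random walk in the environment $\w$. A direct calculation using the identities $D(x) = D'(x)/\bar\nu$ (case $\k=1$) and $\bar\b = \bar\nu/\vp$ (case $\k\in(1,2)$) shows that the deterministic centering terms in $\T_\e(t)$ and $\U_\e\circ\l_0(t) = \U_\e(t/\bar\nu)$ cancel exactly, giving in all three regimes
\[
\T_\e(t) - \U_\e\circ\l_0(t) = \e^{1/\k}\bigl( T_{\fl{t/\e}} - T_{\nu_{\fl{t/(\e\bar\nu)}}} \bigr),
\]
with the convention $1/\k = 1$ when $\k=1$. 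The key structural observation is that the jumps of $\T_\e$ at the times $\{n\e : n \in (\nu_{k-1},\nu_k]\}$ sum to exactly $\e^{1/\k}(T_{\nu_k}-T_{\nu_{k-1}})$, which is the magnitude of the single jump of $\U_\e\circ\l_0$ at $k\e\bar\nu$; between consecutive jumps, both paths evolve linearly at identical slope (which is zero when $\k\in(0,1)$).

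Using this, I intend to construct parametric representations of $\Gamma_{\T_\e}$ and $\Gamma_{\U_\e\circ\l_0}$ on $[0,s]$ that sweep through the two paths block-by-block, matching the running partial sums of jump values within each block $k$ and using (possibly) different time parametrizations across blocks. The resulting $M_1$ bound has two components: a \emph{time-shift} term $\max_{1\le k\le \lceil s/(\e\bar\nu)\rceil}\e\,|\nu_k-k\bar\nu|$ controlling the misalignment of block boundaries, and (when $\k\in[1,2)$) a \emph{drift-accumulation} term arising from the mismatch of the total linear decrease across a block of random length. Both quantities depend only on the environment, so $E_\w$ acts trivially. The time-shift term vanishes $Q$-a.s.\ by the law of large numbers for the i.i.d.\ ladder gaps $\{\nu_i-\nu_{i-1}\}$. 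The drift-accumulation term vanishes in $Q$-probability using the Donsker-type bound that $\max_{k\le K}|\nu_k-k\bar\nu|$ is of order $\sqrt{K}$ in probability under $Q$; the scaling in the definition of $\T_\e$ is calibrated so that, precisely because $\k<2$, the product with the relevant slope factor vanishes as $\e\to 0$.

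The main obstacle is carrying out the $M_1$ parametric representation argument rigorously across all three regimes. The case $\k\in(0,1)$ is essentially a Hausdorff-type estimate for graphs of monotone step functions. The cases $\k=1$ and $\k\in(1,2)$ require more care because the paths decrease between jumps with a slope that diverges as $\e\to 0$, so that a naive time misalignment could in principle translate into a large value discrepancy. A careful block-wise stitching of parametric representations is needed to ensure this does not happen and that the $M_1$ bound is indeed controlled by the two terms identified above.
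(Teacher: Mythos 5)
Your overall strategy is viable and genuinely different from the paper's in one respect: you compose $\U_\e$ with the \emph{deterministic} time change $\l_0$ and push all the randomness of the ladder locations into the coupling estimate, whereas the paper composes with the \emph{random} time change $\l_\e(t)=\e\max\{k:\e\nu_k\leq t\}$ (so that the jumps of $\U_\e\circ\l_\e$ land exactly at the times $\e\nu_j$ where $\T_\e$ finishes its $j$-th ladder block), invokes Corollary \ref{mcompcont} for the pair $(u_\e,\l_\e)\to(\mu,\l_0)$, and only then compares $\U_\e\circ\l_\e$ with $\T_\e$ (via the intermediate process $\V_\e$ when $\k\in[1,2)$). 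Your first step and your identification of the time-shift term $\max_k\e|\nu_k-k\bar\nu|$ (which also dominates the within-block horizontal spread) and of the drift-accumulation term (of order $o(\e^{-1/2})\cdot O_P(\e^{1/2})$, exactly the estimate \eqref{UlVcompare}) are correct.

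The gap is that your claimed two-component $M_1$ bound is incomplete, and the missing component is exactly where your assertion that ``both quantities depend only on the environment, so $E_\w$ acts trivially'' fails. Since $d_s^{M_1}(x,y)\geq|x(s)-y(s)|$, any bound on $d_s^{M_1}(\T_\e,\U_\e\circ\l_0)$ must control the value discrepancy at the right endpoint,
\[
\bigl|\T_\e(s)-\U_\e\circ\l_0(s)\bigr|=\e^{1/\k}\bigl|T_{\fl{s/\e}}-T_{\nu_{\fl{s/(\e\bar\nu)}}}\bigr|
\]
(after the centering cancellation you verified), and this is a functional of the walk, not of the environment alone. Moreover, because you aligned block \emph{indices} rather than block \emph{boundaries}, the two site indices $\fl{s/\e}$ and $\nu_{\fl{s/(\e\bar\nu)}}$ differ by $O_P(\e^{-1/2})$ sites, so this term is the hitting-time increment over a window of $O_P(\e^{-1/2}/\bar\nu)$ ladder blocks; it does vanish (take $E_\w$ to replace it by $\sum_j\b_j$ over the window and use stationarity together with the tail \eqref{btail}, or a stable-scaling estimate giving order $\e^{1/(2\k)}$ when $\k<1$), but this is a genuinely separate estimate that neither the law of large numbers for the ladder gaps nor the Donsker bound supplies. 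The paper's use of the random time change $\l_\e$ is precisely what reduces this endpoint term to a \emph{single} block, $\e^{1/\k}(T_{\nu_{k+1}}-T_{\nu_k})$, whose quenched expectation is $\b_{k+1}$ and is then controlled by \eqref{btail}; if you keep $\l_0$, you must either prove the multi-block version of this estimate or add it explicitly to your list of components before the proof is complete.
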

Under the assumption of Proposition \ref{mnsncompare}, this lemma will
imply that $m_{\e} \Lra \Psi(\mu,\l_0)$ in $\mathcal{M}_1(D_\infty)$ under $Q$. 
\begin{lem}\label{PQcouple}
There exists a measure $\mathfrak{P}$ on pairs of environments
$(\w,\w')$ such that the marginal distributions of $\w$ and $\w'$ are
$P$ and $Q$, respectively, and, for each $\e>0$,  there exists a
coupling $P_\e = P_{\e;\w,\w'}$ of the random 
measures $m_{\e,\w}$ and $m_{\e,\w'}$ such that  
\[
 \lim_{\e\ra 0} \mathfrak{P}\left( E_{\e}[ d_\infty^{U}(x,y) ] \geq
   \eta \right) = 0, \quad \text{for all} \ \eta > 0.  
\]
\end{lem}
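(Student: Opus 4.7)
\noindent\emph{Proof proposal.} The plan is to couple the two environments so that they agree on the non-negative half-line, and then couple the two quenched walks via a shared family of innovations indexed by position and visit-count. This forces the walks to execute identical moves at every non-negative site, with potentially different behavior restricted to their left excursions. The difference in hitting times will then be bounded by the sum of the (almost surely finite) total times each walk spends at negative sites, a bound which vanishes in the scaling of \eqref{Tedef}.

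I would construct $\mathfrak{P}$ by drawing three mutually independent pieces: $(\w_x)_{x\geq 0}$ iid with the single-site $P$-marginal (which coincides with the $Q$-marginal on $\{x\geq 0\}$, since conditioning on $\mathcal{R}$ affects only the negative half-line); $(\w_x)_{x<0}$ iid with the $P$-marginal; and $(\w'_x)_{x<0}$ with the conditional law of $(\w_x)_{x<0}$ given $\mathcal{R}$; then set $\w'_x:=\w_x$ for $x\geq 0$. This yields $\w\sim P$, $\w'\sim Q$, and $\w_x=\w'_x$ on $\{x\geq 0\}$. Next I would enlarge the probability space by an independent family $\{U_{x,k}:x\in\Z,\,k\geq 1\}$ of iid Uniform$[0,1]$ variables, and define walks $X^{(1)}, X^{(2)}$ starting at $0$ so that, on its $k$-th visit to site $x$, each walk steps right iff $U_{x,k}\leq \w_x$ (respectively $U_{x,k}\leq\w'_x$). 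Both walks have the correct quenched marginals, and the pushforward under $X\mapsto\T_\e$ defines the desired coupling $P_\e$ of $m_{\e,\w}$ and $m_{\e,\w'}$.

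The central claim, to be proved by induction on $j$, is that $X^{(1)}_{\s^{(1)}_j}=X^{(2)}_{\s^{(2)}_j}$ for every $j\geq 0$, where $\s^{(i)}_j$ denotes the $j$-th time $X^{(i)}$ occupies a non-negative site. The inductive step rests on the observation that whenever both walks stand at a common non-negative site $x$, both have visited $x$ the same number of times---because visits to the non-negative site $x$ are contributed entirely by the reduced non-negative trajectory, which agrees by the inductive hypothesis---so both consume the same innovation $U_{x,k}$ and, as $\w_x=\w'_x$, make the same decision about the next move. If this move sends both walks from $0$ to $-1$, each performs its own independent left excursion, but by transience under Assumption \ref{tasm} both return to $0$ in finite time, whereupon both are poised to make their $(k+1)$-st visit to $0$ and the induction continues.

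Granted the claim, write $j_n$ for the common hitting index of level $n$ in the reduced non-negative trajectory and $\Delta^{(i)}(s)$ for the cumulative time $X^{(i)}$ has spent at negative sites up to time $s$. A direct counting argument (the time $\s^{(i)}_{j_n}$ equals $j_n$ plus the number of negative-site steps in that interval) gives $T^{(i)}_n=j_n+\Delta^{(i)}(T^{(i)}_n)$, whence
\[
\sup_{n\geq 0}\bigl|T^{(1)}_n-T^{(2)}_n\bigr|\leq \Delta^{(1)}_\infty+\Delta^{(2)}_\infty,
\]
which is finite almost surely by transience. The centerings in \eqref{Tedef} are deterministic functions of $\e$ and hence cancel in the difference, so
\[
\sup_{s\geq 0}\bigl|\T^{(1)}_\e(s)-\T^{(2)}_\e(s)\bigr|\leq c_\k(\e)\bigl(\Delta^{(1)}_\infty+\Delta^{(2)}_\infty\bigr),
\]
with $c_\k(\e)=\e^{1/\k}$ for $\k\neq 1$ and $c_\k(\e)=\e$ for $\k=1$, and in either case $c_\k(\e)\to 0$. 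Since $d_\infty^U(x,y)\leq(\sup_s|x(s)-y(s)|)\wedge 1$, two applications of bounded convergence (first under $E_\e$, then under $\mathfrak{P}$) yield $\mathfrak{P}(E_\e[d_\infty^U(x,y)]\geq\eta)\to 0$ for every $\eta>0$, as required. The main obstacle will be the inductive synchronization step: verifying that the two walks truly remain in lockstep on their non-negative trajectories despite their potentially wildly different left excursions. The essential device is the indexing of innovations by (position, visit count) rather than by time, which forces automatic re-synchronization of visit counts at every non-negative site after each left excursion; once that is in place the remainder of the argument is bookkeeping plus standard quenched transience.
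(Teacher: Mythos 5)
Your proposal is correct and follows essentially the same route as the paper: identical environments on the non-negative axis with the respective marginals on the negative axis, walks coupled through shared innovations indexed by (site, visit count) so the non-negative trajectories stay in lockstep, and the difference of hitting times bounded by the total time spent at negative sites, which is killed by the $\e^{1/\k}$ (resp.\ $\e$) scaling. The only cosmetic difference is that the paper finishes via the a.s.\ finiteness of the quenched expectations $E_\w L + E_{\w'}L'$ of the time spent left of $0$, whereas you use bounded convergence together with a.s.\ finiteness of that time; both are valid.
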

Under the assumption of Proposition \ref{mnsncompare}, this lemma and
another appeal to part (3) of Lemma \ref{WAEcor} will imply the claim
of the proposition. We proceed now to prove the three lemmas. 
\begin{proof}[Proof of Lemma \ref{PQcouple}]
We use the same construction as in the proof of Lemma 4.2 in \cite{psWQLTn}. 
First let $\w$ and $\tilde{\w}$ be independent with distributions $P$
and $Q$ respectively. Then, construct $\w'$ by letting  
\[
 \w'_x = \begin{cases} \tilde{\w}_x & x \leq -1 \\ \w_x & x \geq 0. \end{cases}
\]
Then $\w'$ has distribution $Q$ and is identical to $\w$ on the
non-negative integers. Let $\mathfrak{P}$ be the joint law of 
$(\w,\w')$. 

Given a pair of environments $(\w,\w')$, we construct coupled 
random walks $\{X_n\}$ and $\{X_n'\}$ 
so that the marginal laws of $\{X_n\}$ and $\{X_n'\}$ 
are $P_\w$ and $P_{\w'}$ respectively. 
We do that by coordinating all steps of the random walks to the right
of 0. That is, since $\w_x = \w_x'$ for any $x\geq 0$, we require that
on the respective $i^{th}$ visits of the walks $X_n$ and $X_n'$ to
site $x$ they both either move to the right or both move to the
left. The details of this coupling can be found in \cite{psWQLTn}. 
Let $P_{\e}=P_{\e;\w,\w'}$ denote the joint quenched law of the two
random walks coupled in this manner; the  corresponding expectation is 
denoted by $E_{\e}=E_{\e;\w,\w'}$. Let $T_k$, $T_k'$ and $\T_\e$, $\T_\e'$ be
the hitting times and the path processes of hitting times
corresponding to the random walks $\{X_n\}$ and $\{X'_n\}$,
respectively.  Note that
\[
 d_\infty^{U}( \T_\e, \T_\e' ) \leq \sup_{t<\infty} |\T_\e(t) -
 \T_\e'(t)| = \e^{1/\k} \sup_{n\geq 1} |T_n - T_n'|.  
\]
However, it is easy to see that the coupling of $X_n$ and $X_n'$ is
such that $\sup_{n\geq 1} |T_n - T_n'| = |L-L'|$, where $L$ and $L'$
are the number of steps that the walks $\{X_n\}$ and $\{X_n'\}$,
respectively, spend to the left of 0. It is easy to see (and was shown
in the proof of Lemma 4.2 in \cite{psWQLTn}) that $E_{\e;\w,\w'}|L-L'|
\leq E_\w L + E_{\w'} L' < \infty$, $\mathfrak{P}$-a.s. 
Therefore, for any $\eta>0$
\[
 \lim_{\e\ra 0} \mathfrak{P}\left( E_{\e}\left[ d_\infty^{U}( \T_\e, \T_\e' ) \right] \geq \eta \right) 
\leq \lim_{\e\ra 0} \mathfrak{P}\left( \e^{1/\k} E_{\w,\w'}|L-L'| \geq \eta \right) = 0. 
\]
\end{proof}

\begin{proof}[Proof of Lemma \ref{TnTnuncouple}]
We start with a time change in the process $\U_\e$ to align its jumps
with the hitting times of corresponding ladder locations in the process
$\T_\e$.  
To this end, define $\l_\e \in D_\uparrow^+$, the space of nonnegative
non-decreasing functions in $D_\infty$, by 
\[
 \l_\e(t) = \e \max\{ k: \, \e\nu_k \leq t \}, \ t\geq 0.
\]
Then, the renewal theorem implies that $\lim_{\e\ra 0} \l_\e(t) =
\l_0(t)$, $Q$-a.s, for any fixed $t\geq 0$. Since $\l_\e$ is
non-decreasing and $\l_0$ is continuous and non-decreasing, the
convergence is uniform on compact subsets of $[0,\infty)$.   
Therefore, $\lim_{\e\ra 0} d_\infty^{J_1}(\l_\e, \l_0) = 0$,
$Q$-a.s. Furthermore, it follows from the functional central limit
theorem for renewal sequences with a finite variance that 
$\e^{-1/2}( \l_\e-\l_0 )$ converges weakly in $(D_\infty,J_1)$ to a
Brownian motion, as $\e \ra 0$. See Theorem 7.4.1 in \cite{wSPL}. 

The assumption $u_{\e} \Lra \mu $ and Corollary \ref{mcompcont} show
that $\Psi(u_{\e}, \l_\e) \Lra \Psi(\mu, \l_0)$ under $Q$, so 
by Lemmas \ref{WAEcor} and \ref{Dt2Dinfty}, the claim of the present
lemma will follow once we show that for every $0<t<\infty$ and
$\eta>0$, 
\be\label{UlTcompare}
 \lim_{\e\ra 0} Q\left( E_\w\left[ d_t^{M_1}( \U_\e \circ \l_\e, \,
     \T_\e ) \right] \geq \eta \right) = 0. 
\ee
To simplify the notation, we omit the superscripts in functions of the
type $\T_\e^{(t)}$. 
Because of the centering present when $\k \in[1,2)$ but not when $\k
\in (0,1)$, we treat the two cases separately. 

\textbf{Case I: $\k \in (0,1)$.}
Note that the definition of $\l_\e$ implies that $\U_\e(\l_\e(t)) =
\e^{1/\k} T_{\nu_j} = \T_\e(t)$ when $t= \e \nu_j$.  We arrange
the respective parametric representations of the completed graphs of
the two random functions, $\U_\e\circ \l_\e$ and $\T_\e$, so that at each $s_j=j/(k+1)\in [0,1)$ both 
parametric representations are equal, to $\bigl( \e \nu_j, \e^{1/\k}
T_{\nu_j}\bigr)$. Here $k$ is the largest $j$ so that $\nu_j\leq
t/\e$. For $s_j<s<s_{j+1}$ with $j=0,1,\ldots, k-1$ we arrange the two
parametric representations so that the vertical ($v$) coordinates
always stay the same (see Figure \ref{MatchingFigure}). Then the
distance between the corresponding points on the completed graphs on the
interval in that range of $s$ is taken horizontally, and it is, at most,
$\e (\nu_{j+1} - \nu_j)$. This horizontal matching cannot, generally, be performed on
the interval $(s_k,1]$ since the two functions may not be equal at
time $t$. On this interval we keep horizontal ($u$) coordinates the
same. The distance between the corresponding points is now taken vertically, and
it is, at most, 
$\e^{1/\k} (T_{\nu_{k+1}} - T_{\nu_k})$. Therefore, 
\[
 d_t^{M_1}(\U_\e\circ \l_\e, \, \T_\e) \leq \max \left\{ \max_{j<k}
   \e(\nu_{j+1} - \nu_j) , \, \e^{1/\k} (T_{\nu_{k+1}} - T_{\nu_k})
 \right\}. 
\]
Since $k\leq t/\e$, we conclude, using stationarity of the sequence
$(\nu_{j+1}-\nu_j)$ under $Q$  that for $0 <\e<1$ so small that $\e
(\log 1/\e)^2\leq \eta$, 
\begin{align*}
Q\left( E_\w [ d_t^{M_1}(\U_\e\circ \l_\e, \, \T_\e) ] \geq \eta \right) 
 &\leq \frac{t}{\e}Q\left( \nu_{1} > \log^2(1/\e) \right) \\
& + Q\left( \e^{1/\k} \b_{k+1} > \eta \ \text{for} \ t \in [\e\nu_k, \e \nu_{k+1})  \right). 
\end{align*}
Since $\nu_1$ has some finite exponential moments (see
\cite{pzSL1}), the first term on the right above vanishes as $\e\ra
0$. For the second term note that $t \in [\e \nu_k,\, \e \nu_{k+1})$
is equivalent to $\l_\e(t) = \e k$, hence   
\begin{align*}
& Q\left( \e^{1/\k} \b_{k+1} > \eta \ \text{for} \ t \in [\e\nu_k, \e
  \nu_{k+1})  \right) \\
&\quad \leq Q\left( |\l_\e(t) - t/\bar\nu| > \e^{1/4} \right) + Q\left(\exists k: | k - t/(\bar\nu \e)| \leq \e^{-3/4} , \,  \b_{k+1} > \eta \e^{-1/\k} \right) \\
&\quad \leq Q\left( |\l_\e(t) - t/\bar\nu| > \e^{1/4} \right) + 2 \e^{-3/4} Q(\b_1 > \eta \e^{-1/\k} ),
\end{align*}
using the stationarity of the $(\b_k)$ under $Q$ in the last inequality. 
The functional central limit theorem for renewal sequences implies
that the first probability on the right vanishes as $\e\ra 0$. The
second term also vanishes as $\e\ra 0$ by the tail decay \eqref{btail} of $\b_1$.  
This finishes the proof of \eqref{UlTcompare} in the case $\k \in (0,1)$. 

\begin{figure}
\includegraphics{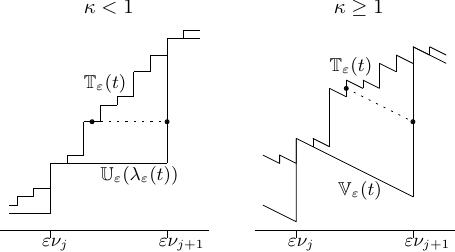}
\caption{A demonstration of the matching of the parameterizations of the completed graphs of $\T_\e$ with the completed graphs of $\U_\e \circ \l_\e$ and  $\V_\e$ when $\k\in(0,1)$ and $\k \in[1,2)$, respectively.\label{MatchingFigure}}
\end{figure}

\textbf{Case II: $\k \in [1,2)$.}
To overcome the difficulty of matching the centering terms of $\U_\e
\circ \l_\e$  and $\T_\e$ we define $\V_\e \in D_\infty$ by 
\[
 \V_\e(t) = 
\e^{1/\k} T_{\nu_{\fl{\l_\e(t)/\e}}} - 
\begin{cases}
t D(1/\e)  & \k = 1 \\
(t/\vp) \e^{-1+1/\k} & \k \in (1,2).  
\end{cases}
\]
$\V_\e$ is defined so that the hitting times portion is the same as in
$\U_\e \circ \l_\e$ while the linear centering is the same as in
$\T_\e$.  

Since the only difference between $\U_\e \circ \l_\e$ and $\V_\e$ is
in the centering term, we have for any $t<\infty$ 
\be\label{UlVcompare}
 d_t^{M_1}( \U_\e \circ \l_\e, \, \V_\e) 
\leq \sup_{t'\leq t} | \l_\e(t') - t'/\bar\nu | 
\begin{cases}
 D'(1/\e) & \k = 1 \\
 \e^{-1+1/\k} \bar\b & \k \in(1,2), 
\end{cases}
\ee
using $D'(1/\e) = \bar\nu D(1/\e)$ when $\k = 1$ and $\bar\b = \bar\nu/\vp$ when $\k\in(1,2)$. 
Recall that the random element of $D_t$, $t'\mapsto \e^{-1/2}(
\l_\e(t') - t'/\bar\nu )$,  converges weakly in
$(D_t,J_1)$ to Brownian motion, which is a continuous process. Every
continuous function in $D_t$ is a continuity point of the mapping
$x\mapsto \sup_{t'\leq t}|x(t')|$ from $D_t$ to $\R$. Therefore, we
can use the continuous mapping theorem to show that the term in the
right hand side of \eqref{UlVcompare} converges to 0 in
$Q$-probability as $\e\ra 0$, by noticing that both $D'(1/\e)$ (when
$\k=1$) and $\e^{-1+1/\k}$ (when $\k\in(1,2)$) are $o(\e^{-1/2})$. 
Therefore, in order to prove \eqref{UlTcompare} it is enough to show
that for every $0<t<\infty$ and $\eta>0$, 
\be\label{dVT}
 \lim_{\e\ra 0} Q\left( E_\w \left[ d_t^{M_1}( \V_\e, \T_\e ) \right]
   > \eta \right) = 0. 
\ee
 
The proof of \eqref{dVT} is very similar to the proof of
\eqref{UlTcompare} when $\k \in(0,1)$. 
Indeed, note that $\V_\e(t) = \T_\e(t)$ whenever $t = \e\nu_j$ for
some $j$. Again, we  arrange
the respective parametric representations of the completed graphs of
the two random functions so that, for $k$ being the largest $j$ so that $\nu_j\leq
t/\e$, both parametric representations are equal, to $\bigl( \e \nu_j,
\e^{1/\k} T_{\nu_j}\bigr)$ at $s_j=j/(k+1), \, j=0,1,\ldots, k$. For
$s_j<s<s_{j+1}$ with $j=0,1,\ldots, k-1$ the two parametric representation can be
chosen in such a way that the line connecting the two corresponding
points is always parallel to the segment, connecting the points 
$(\e \nu_j, \V_\e(\e \nu_j))$ and $(\e \nu_{j+1}, \V_\e(\e \nu_{j+1} - ))$. 
See Figure \ref{MatchingFigure} for a visual
representation of this matching. In this case the distance between the
two corresponding points does not exceed the length of the above
segment, which is shorter than $\e^{1/2}(\nu_{j+1} - \nu_j)$ for $\e$
small enough. As in the case $\k\in(0,1)$, on the interval $(s_k,1]$
we keep horizontal ($u$) coordinates of the two parametric
representations the same. Overall, we obtain the bound 
\[
 d_t^{M_1}( \V_\e,\, \T_\e) \leq \max \left\{ \e^{1/2} \max_{j\leq k}
   (\nu_{j+1} - \nu_j), \, \e^{1/\k} ( T_{\nu_{k+1}} - T_{\nu_k} )
 \right\}.  
\]
From here we proceed as in the case $\k\in(0,1)$ above. 
\end{proof}

\begin{proof}[Proof of Lemma \ref{Expcouple}]
By Lemma \ref{Dt2Dinfty}  it is enough to show that for each
$0<s<\infty$ and $\eta>0$, 
\[
 \lim_{\e\ra\infty} Q\left( E_\w\left[ \sup_{t \leq s} |\U_\e(t) -
 \S_\e(t)| \right] \geq \eta \right) = 0.  
\]
Since both $\U_\e$ and $\S_\e$ are piecewise linear with the same
slope between times $t\in \e\Z$, 
\[
 \sup_{t \leq s} |\U_\e(t) - \S_\e(t)| 
= \e^{1/\k} \max_{k\leq \lceil s/\e \rceil } \left|T_{\nu_{k}} -
\sum_{i=1}^{k} \b_i \tau_i \right|. 
\]
Now, it is easy to see that $M_k = T_{\nu_{k}} - \sum_{i=1}^{k} \b_i
\tau_i$ is a martingale under $P_\w$. Therefore, by the
Cauchy-Schwartz and  $L^p$-maximum inequalities for martingales,
\begin{align*}
 E_\w\left[ \sup_{t \leq s} |\U_\e(t) - \S_\e(t)| \right] 
&\leq \left( E_\w\left[ \sup_{t \leq s} |\U_\e(t) - \S_\e(t)|^2 \right] \right)^{1/2} \\
&\leq \e^{1/\k} \left( 4 E_\w \left[ M_{\cl{s/\e}}^2 \right] \right)^{1/2} \\
&= 2 \e^{1/\k} \left( \Var_\w\left( T_{\nu_{\cl{s/\e}}} - \sum_{i=1}^{\cl{s/\e}} \b_i \tau_i \right) \right)^{1/2}.
\end{align*}
Therefore, 
\be
 Q\left( E_\w\left[ \sup_{t \leq s} |\U_\e(t) - \S_\e(t)| \right] \geq \eta \right)
\leq Q\left( 4 \e^{2/\k} \Var_\w\left( T_{\nu_{\cl{s/\e}}} - \sum_{i=1}^{\cl{s/\e}} \b_i \tau_i \right)  \geq \eta^2 \right). \label{UeSesupub}
\ee
In the proof of Lemma 4.4 in \cite{psWQLTn}, a natural coupling of $\tau_i$ and $T_{\nu_i} - T_{\nu_{i-1}}$ was constructed so that for any $\eta>0$,
\[
 \lim_{n\ra\infty} Q\left( n^{-2/\k} \Var_\w\left( T_{\nu_{n}} - \sum_{i=1}^{n} \b_i \tau_i \right)  \geq \eta \right) = 0.
\]
Applying this to \eqref{UeSesupub} completes the proof of the lemma. 
\end{proof}

\section{Weak weak quenched limits for $\S_\e$}\label{ExpWQL}

In this section we prove Proposition \ref{WQLSn}. 
For any environment $\w$ and $\e>0$,  define a point process by 
\[
 N_{\e} = \sum_{i\geq 1} \d_{(\e^{1/\k} \b_i,\, \e i)}. 
\]
We view $N_{\e}$ as a random element of $\mathcal{M}_p((0,\infty]
\times [0,\infty))$. 
Recalling the definitions of $\H$ in \eqref{Hdef} and $\S_\e$ in
\eqref{Sdef}, we see that the quenched law of $\S_\e$ satisfies 
\be\label{sewHNew}
 s_{\e} = 
\begin{cases}
 \H(N_{\e}) & \k \in (0,1)\\
 \H(N_{\e}) * \lm(-D'(1/\e)) & \k =1\\
 \H(N_{\e}) * \lm(-\bar\b \e^{-1+1/\k}) & \k \in (1,2). 
\end{cases}
\ee
The key to the proof of Proposition \ref{WQLSn} is the following lemma
which shows weak convergence of the point process $N_{\e}$.  
\begin{lem}\label{Newlim}
Under the measure $Q$, as $\e\to 0$, the point process $N_{\e}$
converges  weakly in the space $\mathcal{M}_p((0,\infty]
\times [0,\infty))$ to a non-homogeneous Poisson point process $N_{\l,\k}$
with intensity measure $\l x^{-\k-1} dx \, dt$. Moreover, 
$\l = \tc \k$, where  $\tc$ is the tail constant in \eqref{btail}. 
\end{lem}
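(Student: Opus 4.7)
The plan is to verify the convergence of Laplace functionals of $N_\e$ to those of the claimed Poisson point process. Since the limiting process is simple with locally finite intensity, it suffices by standard criteria (see e.g.\ \cite[Proposition 3.19]{rEVRVPP}) to show: (i) $E_Q[N_\e(B)] \to \mu(B)$ for rectangles $B = (a,b]\times[c,d]$ with $0<a<b\leq \infty$, $0\leq c<d<\infty$, where $\mu(dx\,dt) = \l x^{-\k-1}dx\,dt$; and (ii) $Q(N_\e(B)=0) \to e^{-\mu(B)}$ for such $B$, along with a suitable joint statement for disjoint rectangles.

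For the intensity, stationarity of $(\b_i)$ under $Q$ together with the tail asymptotic \eqref{btail} gives
\[
E_Q[N_\e(B)] = \bigl|\{i\geq 1: \e i \in [c,d]\}\bigr| \cdot \bigl(Q(\b_1 > a\e^{-1/\k}) - Q(\b_1 > b\e^{-1/\k})\bigr) \sim (d-c)\tc(a^{-\k}-b^{-\k}),
\]
which matches $\mu(B)$ precisely when $\l = \tc \k$. This handles (i) and fixes the constant.

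The main obstacle is (ii) and the corresponding joint statement, because $(\b_i)$ is only stationary under $Q$ and \emph{not} i.i.d.: each $\b_i = E_\w[T_{\nu_i}-T_{\nu_{i-1}}]$ depends, through the standard formulas for quenched hitting times, on the environment in $(-\infty,\nu_i)$ and thus inherits dependence on blocks strictly preceding the $i$-th one. I would handle this exactly as in the proof of the corresponding one-dimensional result in \cite{psWQLTn}: introduce, for large $M$, a truncated random variable $\tilde\b_i^{(M)}$ depending only on the environment in $[\nu_{i-M},\nu_i)$. The sequence $(\tilde\b_i^{(M)})_{i\geq 1}$ is then $M$-dependent under $Q$, and a direct Poisson limit argument for weakly dependent heavy-tailed sequences (computing $\prod_i Q(\e^{1/\k}\tilde\b_i^{(M)}\notin (a,b])$ on disjoint time-blocks and using independence of widely separated blocks to reduce to a product) gives PPP convergence of the truncated point processes $\tilde N_\e^{(M)}$ to the same limit $N_{\l,\k}$.

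The remaining, and probably most delicate, step is to show that replacing $\b_i$ by $\tilde\b_i^{(M)}$ produces no asymptotic error: one needs an estimate of the form
\[
\lim_{M\to\infty}\limsup_{\e\to 0} Q\Bigl(\,\max_{i\leq T/\e}\e^{1/\k}|\b_i - \tilde\b_i^{(M)}| > \eta\Bigr) = 0, \quad \eta>0,
\]
uniformly over the relevant rectangles. This follows from the explicit representation of $\b_i$ as a sum of products of the $\rho_j$, together with the fact that the products $\prod_{j=k}^{\nu_{i-1}-1}\rho_j$ decay geometrically in $\nu_{i-1}-k$ on a set of $Q$-probability tending to $1$ as $M\to\infty$ (because of Assumption \ref{tasm} and the ladder-location construction). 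Combining the $M$-dependent PPP limit with this approximation via the standard $\e\to 0$ first, then $M\to\infty$ scheme yields the stated weak convergence. I would reference or adapt the corresponding calculations from \cite{psWQLTn}, where the analogous point-process convergence without the time coordinate is carried out, and the addition of the deterministic time marks $\e i$ requires only notational changes.
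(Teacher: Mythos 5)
Your overall strategy is the one the paper uses: replace $\b_i$ by a version with finite-range dependence, prove Poisson convergence for the truncated point process, and transfer back by controlling the replacement error. The paper's truncation is $\e$-dependent (a sequence $\b_i^{(\e)}$ with $\b_i^{(\e)},\b_j^{(\e)}$ independent for $|i-j|>\e^{-1/2}$ and a superexponentially small coupling error, imported from \cite{psWQLTn}), and the limit is established through Laplace functionals for functions that are step functions in the time variable; your fixed-$M$ truncation with a subsequent $M\to\infty$ limit is a cosmetic variant of this. Two points in your sketch, however, carry essentially all of the difficulty and are passed over too quickly. First, the block-factorization of the avoidance probability for an $M$-dependent (or $\e^{-1/2}$-dependent) sequence only yields the Poisson avoidance function if one also rules out \emph{clusters} of exceedances within a single dependence range: without a condition of the type $Q(\b_i>\d\e^{-1/\k},\,\b_j>\d\e^{-1/\k} \text{ for some } 0<j-i\leq \tau/\e)\to 0$, the same intensity computation is compatible with a compound Poisson limit. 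This anti-clustering estimate is exactly the content of the paper's quantity $K_\e(\tau)$ and the bound $\lim_{\tau\to 0}\limsup_{\e\to 0}Q(K_\e(\tau)>0)=0$, and it must be stated and proved (or explicitly quoted from \cite{psWQLTn}), not absorbed into "independence of widely separated blocks". Second, your replacement estimate as written is too weak for $\k\in[1,2)$: the error is $\b_1-\tilde\b_1^{(M)} = 2W_{\nu_{-M}-1}\Pi_{\nu_{-M},-1}R_{0,\nu-1}$ in the notation of \eqref{PiRWdef}, and since the factor $R_{0,\nu-1}\leq\b_1$ is itself heavy-tailed with index $\k$, a Markov bound based only on geometric decay of $E_Q[\Pi_{0,\nu-1}]^M$ gives $Q(\e^{1/\k}|\b_1-\tilde\b_1^{(M)}|>\eta)\lesssim r^M\e^{1/\k}$, and the union bound over $\sim 1/\e$ indices diverges when $\k>1$. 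What one actually needs (and what the product structure does deliver) is a tail bound of the form $Q(|\b_1-\tilde\b_1^{(M)}|>x)\leq C_M x^{-\k}$ with $C_M\to 0$ as $M\to\infty$, so that the union bound gives $C_M\eta^{-\k}$, vanishing in the second limit. With these two items supplied, your argument closes and agrees with the paper's; the intensity computation fixing $\l=\tc\k$ is correct as you state it.
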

\begin{proof}
The idea of the proof is similar to that of the proof of Proposition
5.1 in \cite{psWQLTn}. It was shown in the above proof that for
$0<\e<1$ there is a stationary under $Q$ sequence of random variables
$\bigl( \b_i^{(\e)}, \, i=1,2,\ldots\bigr)$ on $\Omega$ such that
$\b_i^{(\e)}$ and $\b_j^{(\e)}$ are independent if $|i-j| >
\e^{-1/2}$, and such that, for some $C,C'>0$, 
\begin{equation} \label{e:bcutoff}
Q\left( \left| \b_1 - \b_1^{(\e)}\right| > e^{-\e^{-1/4}} \right) \leq C e^{-C'
   \e^{-1/2}}, \ 0<\e<1\,.
\end{equation}
We define an approximating point process by
\[
 N^{(1)}_{\e} = \sum_{i\geq 1} \d_{(\e^{1/\k} \b^{(\e)}_i,\, \e i)}, \ 0<\e<1\,,
\]
and proceed by proving the convergence
\begin{equation} \label{e:approx.conv}
 N^{(1)}_{\e} \Lra N_{\l,\k} \ \text{weakly in $\mathcal{M}_p((0,\infty]
\times [0,\infty))$}
\end{equation}
as $\e\to 0$, under the measure $Q$. 

We start by considering measurable functions $f:\, (0,\infty] \times
[0,\infty) \ra \R_+$ of the form 
\begin{equation} \label{e:simple.f}
f(x,t) = \sum_{i=1}^k f_i(x)\one_{[a_{i-1},a_i)}(t), \ (x,t)\in (0,\infty] \times
[0,\infty)\,,
\end{equation} 
where $k=1,2,\ldots$, $f_i:\, (0,\infty] \ra \R_+, \, i=1,\ldots, k$
are continuous functions that vanish for 
all $0<x<\d$ for some $\d>0$, and are Lipschitz on the interval
$(\d,\infty)$, and $0=a_0<a_1<\ldots <a_k<\infty$. We will prove that
for such a function, 
\be \label{e:Laplace.conv.1}
 \lim_{\e\ra 0} E_Q\left[ e^{-N^{(1)}_{\e}(f) } \right] = \exp
 \left\{ - \sum_{i=1}^k (a_i-a_{i-1})\int_0^\infty  (1-e^{-f_i(x)}) \l
   x^{-\k-1} \, dx \right\}.    
\ee

To this end we define, as in \cite{psWQLTn}, for $0<\tau<1$, 
$$
K_\e(\tau) = \text{card}\bigl\{ i=1,\ldots, \lfloor a_k/\e\rfloor:\
\text{both} \ \b_i^{(\e)}>\d \e^{-1/\k} \ \text{and} \ \b_j^{(\e)}>\d \e^{-1/\k} 
$$
$$
\text{for some} \
i+1\leq j\leq i+\tau/\e, \, j\leq a_k/\e.  \bigr\};
$$
as in the above reference we have
\begin{equation} \label{e:small.K}
\lim_{\tau\to 0} \limsup_{\e\to 0} Q(K_\e(\tau)>0)=0.
\end{equation}

Define random sets 
$$
D_\e^{(j)} =  \{ a_{j-1}/\e<i<a_j/\e:\, \b_i^{(\e)}>\d \e^{-1/\k}\}, \
j=1,\ldots, k,
$$
so that 
\begin{align*}
E_Q\left[ e^{-N^{(1)}_{\e}(f) } \right] & =
E_Q\exp\left\{ - \sum_{j=1}^k \sum_{i\in D_\e^{(j)}}
f_j\bigl(\e^{1/\k}\b_i^{(\e)}\bigr)\right\}  \\
&= E_Q\left[ \exp\left\{ - \sum_{j=1}^k \sum_{i\in D_\e^{(j)}}
f_j\bigl(\e^{1/\k}\b_i^{(\e)}\bigr)\right\} \one\bigl(
K_\e(\tau)=0\bigr)\right] \\
&+ E_Q\left[ \exp\left\{ - \sum_{j=1}^k \sum_{i\in D_\e^{(j)}}
f_j\bigl(\e^{1/\k}\b_i^{(\e)}\bigr)\right\} \one\bigl(
K_\e(\tau)>0\bigr)\right] \\
&:= H_\e^{(1)} + H_\e^{(2)}. 
\end{align*}
It follows from \eqref{e:small.K} that the term $H_\e^{(2)}$ is
negligible as $\e\to 0$ and then $\tau\to 0$. Furthermore, given the
event $\{  K_\e(\tau)=0\}$,  for a fixed
$0<\tau<1$ and $\e$ small enough, the points in the random set 
$D_\e:=\cup_j D_\e^{(j)}$ are separated by more than $\e^{-1/2}$, so
that, given also the set $D_\e$, the random variables $\b_i^{(\e)}, \,
i\in D_\e$ are independent, each one with the conditional distribution
of $\b_1^{(\e)}$ given $\b_1^{(\e)}>\d\e^{-1/\k}$. Since for every
$j=1,\ldots, k$, 
$$
 E_Q \Bigl( \exp\bigl\{
  -f_j\bigl(\e^{1/\k}\b_1^{(\e)}\bigr)\bigr\} \big|\b_1^{(\e)}>\d
  \e^{-1/\k}\Bigr) \to \int_1^\infty e^{-f_j(\d t)}\k t^{-(\k+1)}\,
  dt, 
$$
the claim \eqref{e:Laplace.conv.1} will follow once we check that 
\begin{align*} 
& \exp\bigl\{ -C_0 \delta^{-\k}
\sum_{j=1}^k(a_j-a_{j-1})(1-\alpha_j)\bigr\} \leq \lim_{\tau\to 
    0}\liminf_{\e\to 0} E_Q \left( \prod_{j=1}^k \alpha_j^{{\rm
        card} \, D_\e^{(j)}}\Big| 
K_\e(\tau)=0\right) \label{e:mgf.card} \\
 = & \lim_{\tau\to 0}\limsup_{\e\to 0} E_Q \left(
\prod_{j=1}^k \alpha_j^{{\rm  card} \, D_\e^{(j)}}
\Big| K_\e(\tau)=0\right)
\leq  \exp\bigl\{ -C_0 \delta^{-\k}
\sum_{j=1}^k(a_j-a_{j-1})(1-\alpha_j)\bigr\}  
\end{align*}
for any $0<\alpha_j<1, \, j=1,\ldots, k$. This, however, can be proved
in the same way as (48) was proved in \cite{psWQLTn}. 

In order to prove  weak convergence in \eqref{e:approx.conv}, it is
enough to prove that for any Lipschitz continuous function $f:\, (0,\infty] \times
[0,\infty) \ra \R_+$ with support in $[\d,\infty]\times [0,a]$ for
some $0<\d,a<\infty$, 
\be \label{e:Laplace.conv.2}
 \lim_{\e\ra 0} E_Q\left[ e^{-N^{(1)}_{\e}(f) } \right] = \exp
 \left\{ - \int_0^\infty \int_0^\infty (1-e^{-f(x,t)}) \l
   x^{-\k-1} \, dx\, dt \right\};    
\ee
see \cite{rEVRVPP} and Remark 5.2 in \cite{psWQLTn}. To this end, for
$m=1,2,\ldots$ we define
$$
f_j(x) = f(x,ja/m), \, x\in (0,\infty], \, j=1,\ldots, m,
$$
and 
$$
\tilde f(x,t) = \sum_{j=1}^k f_j(x)\one_{[(j-1)a/m,ja/m)}(t), \ (x,t)\in (0,\infty] \times
[0,\infty)\,.
$$
Note that $|f(x,t)-\tilde f(x,t)|\leq La/m$ for all finite $(x,t)$,
where $L$ is the Lipschitz constant of $f$. Therefore, 
$$
\left| E_Q\left[ e^{-N^{(1)}_{\e}(f) } \right] - E_Q\left[
  e^{-N^{(1)}_{\e}(\tilde f) } \right]\right|
\leq \frac{La}{m} E_Q \left[  N^{(1)}_{\e}\bigl( [\delta,\infty]\times
[0,a]\bigr) \right].
$$
Notice that, by stationarity,
$$
E_Q \left[  N^{(1)}_{\e}\bigl( [\delta,\infty]\times
[0,a]\bigr)\right] \leq a\e^{-1} Q\bigl( \b_1^{(\e)}>\d\e^{-1/\k}\bigr),
$$
which, by \eqref{btail} and \eqref{e:bcutoff}, remains bounded as $\e\to 0$. Since the
function $\tilde f$ is of the type \eqref{e:simple.f}, it follows from
\eqref{e:Laplace.conv.1} that 
$$
\lim_{\e\ra 0} E_Q\left[ e^{-N^{(1)}_{\e}(\tilde f) } \right] = \exp
 \left\{ - \int_0^\infty \int_0^\infty (1-e^{-\tilde f(x,t)}) \l
   x^{-\k-1} \, dx\, dt \right\}. 
$$
This proves \eqref{e:Laplace.conv.2} (and, hence, also
\eqref{e:approx.conv}). It follows by \eqref{e:bcutoff} and the
Lipschitz property that for any function $f$ as in
\eqref{e:Laplace.conv.2} we also have 
$$
\lim_{\e\ra 0} E_Q\left[ e^{-N_{\e}(f) } \right] = \exp
 \left\{ - \int_0^\infty \int_0^\infty (1-e^{-f(x,t)}) \l
   x^{-\k-1} \, dx\, dt \right\}.
$$
As before, this establishes the weak convergence stated in the lemma. 
\end{proof}

We would like to use the representation \eqref{sewHNew} of $s_{\e,}$
and the fact that $N_{\e} \Lra N_{\l,\k}$ to obtain a weak limit for
$s_{\e}$ as a random element of $\mathcal{M}_1(D_\infty)$.  
Unfortunately, the function $\H$ is not continuous and so we need the
following lemma which shows that the truncated function $\H_\d$ is
``almost continuous''.  
\begin{lem}\label{Hdcont}
Define subsets $C_{\d}, E \subset \mathcal{M}_p( (0,\infty]\times [0,\infty) )$ by
\[
 C_\d = \{ \zeta: \, \zeta(\{\d,\infty\} \times [0,\infty)) = 0 \}
\]
and 
\[
 E = \{ \zeta: \, \zeta((0,\infty] \times \{t\}) \leq 1, \forall t\in (0,\infty) \} \cap \{ \zeta: \zeta( (0,\infty] \times \{0\}) =  0 \}.  
\]
Then $\H_\d$ is continuous on $C_{\d} \cap E$. 
\end{lem}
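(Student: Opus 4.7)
My plan is to fix a sequence $\zeta_n\to\zeta$ vaguely in $\mathcal{M}_p((0,\infty]\times[0,\infty))$ with $\zeta\in C_\d\cap E$ and show that $\H_\d(\zeta_n)\Lra\H_\d(\zeta)$ weakly in $\mathcal{M}_1(D_\infty)$ by coupling through a single exponential sequence $\vec\tau$. Since $\vec\tau$ is i.i.d., the remark following \eqref{Wdef} allows any measurable ordering of the points of $\zeta$ or of each $\zeta_n$ when defining $W_\d$, and I will use the ordering by the time coordinate. If I can show that for $\Pv_\tau$-a.e.\ $\vec\tau$ the deterministic paths $W_\d(\zeta_n,\vec\tau)$ converge to $W_\d(\zeta,\vec\tau)$ in $(D_\infty,d_\infty^{M_1})$, then for any bounded continuous $f:D_\infty\to\R$ the bounded convergence theorem gives
\[
\int f\,d\H_\d(\zeta_n) = \Ev_\tau\bigl[f(W_\d(\zeta_n,\vec\tau))\bigr] \to \Ev_\tau\bigl[f(W_\d(\zeta,\vec\tau))\bigr] = \int f\,d\H_\d(\zeta),
\]
which is the desired weak convergence.

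\textbf{Finite-horizon reduction.} The core task is the pathwise convergence up to each large time. Because $\zeta\in E$ has at most one point at any given time and is Radon, the set of $T>0$ for which $\zeta$ puts positive mass on $(0,\infty]\times\{T\}$ is countable; for any other $T$, the compact set $K_T=[\d,\infty]\times[0,T]$ satisfies $\zeta(\del K_T)=0$, using both $\zeta\in C_\d$ (no mass on $\{\d,\infty\}\times[0,\infty)$) and the choice of $T$. Vague convergence then guarantees that $\zeta_n|_{K_T}\to\zeta|_{K_T}$ as finite point configurations: if $\zeta$ has $k$ points $(x_1,t_1),\dots,(x_k,t_k)$ in $K_T$, then for $n$ large $\zeta_n$ has exactly $k$ points in $K_T$, which can be labelled $(x_1^{(n)},t_1^{(n)}),\dots,(x_k^{(n)},t_k^{(n)})$ (consistently in their time order, since the $t_i$ are distinct by $\zeta\in E$) with $(x_i^{(n)},t_i^{(n)})\to(x_i,t_i)$ and $x_i^{(n)}>\d$.

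\textbf{Convergence of the paths and main obstacle.} With this labelling, for $\Pv_\tau$-a.e.\ $\vec\tau$ the restrictions of $W_\d(\zeta_n,\vec\tau)$ and $W_\d(\zeta,\vec\tau)$ to $[0,T]$ are step functions with the same number of jumps, at strictly increasing times $t_i^{(n)}\to t_i$ and of sizes $x_i^{(n)}\tau_i\to x_i\tau_i$; hence $d_T^{J_1}(W_\d(\zeta_n,\vec\tau)^{(T)},W_\d(\zeta,\vec\tau)^{(T)})\to 0$, which dominates $d_T^{M_1}$. As this holds for all $T$ outside a countable set and $d_t^{M_1}\wedge 1\leq 1$, dominated convergence applied to the integral definition of $d_\infty^{M_1}$ yields $d_\infty^{M_1}(W_\d(\zeta_n,\vec\tau),W_\d(\zeta,\vec\tau))\to 0$, completing the plan. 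The main subtlety is that $\zeta\mapsto W_\d(\zeta,\vec\tau)$ depends on the chosen ordering of points and arbitrary orderings need not be compatible across $n$; exchangeability of $\vec\tau$ removes this obstacle because $\H_\d(\zeta)$ depends only on the unordered configuration, so the time ordering may be used throughout. The hypotheses $\zeta\in C_\d$ (no points on the threshold $x=\d$ or at $x=\infty$) and $\zeta\in E$ (distinct times, no point at $t=0$) are precisely what is needed to make the finite configuration in $K_T$ stable under perturbation and to keep the time-ordering well-defined in the limit.
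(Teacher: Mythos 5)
Your proposal is correct and follows essentially the same route as the paper's proof: couple $W_\d(\zeta_n,\vec\tau)$ and $W_\d(\zeta,\vec\tau)$ through a common $\vec\tau$, use $\zeta\in C_\d\cap E$ together with vague convergence to stabilize and match the finitely many time-ordered atoms in $[\d,\infty]\times[0,s]$ for good horizons $s$, deduce pathwise $J_1$ (hence $M_1$) convergence on each such $[0,s]$ and then on $D_\infty$, and conclude weak convergence of the laws from a.s.\ convergence. The paper makes the $J_1$ estimate explicit via a piecewise-linear time change bounding the distance by $\max\{\max_i|t_i^{(n)}-t_i|,\ \sum_i|x_i^{(n)}-x_i|\tau_i\}$, but this is exactly the step-function convergence you assert.
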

\begin{proof}
Suppose that $\zeta_n \ra \zeta \in C_{\d}\cap E$. 
We will couple the paths $W_\d(\zeta_n,\vec\tau)$ and
$W_\d(\zeta,\vec\tau)$ by using the same sequence $\vec\tau$ of i.i.d.
standard exponential random variables. Using this coupling we will show that 
$ \lim_{n\ra\infty} W_\d(\zeta_n,\vec\tau) = W_\d(\zeta,\vec\tau)$, $\Pv_\tau$-a.s.
Since almost sure convergence implies weak convergence, $H_\d(\zeta_n) \ra \H_\d(\zeta)$. 

To prove that $W_\d(\zeta_n,\vec\tau)$ converges a.s. to
$W_\d(\zeta,\vec\tau)$ it will be enough to show that  for every
$0<s<\infty$ such that $W_\d(\zeta,\vec\tau)$  is continuous at $s$,
and for every realization $\vec\tau$ with finite values, 
\be\label{Wdlim}
 \lim_{n\ra\infty} d_s^{J_1}( W_\d(\zeta_n,\vec\tau),
 W_\d(\zeta,\vec\tau)) = 0. 
\ee
To this end, take $s$ as above. Then $\zeta([\d,\infty] \times \{s\}) = 0$. 
The assumption that $\zeta \in E$ implies that we may order the atoms of $\zeta$ in $[\d,\infty]\times [0,s]$ so that for $M = \zeta([\d,\infty]\times [0,s])$ we have 
\[
 \zeta\bigl(\cdot \cap ([\d,\infty] \times [0,s] )\bigr) = \sum_{i =
   1}^M \d_{(x_i,t_i)}(\cdot), \quad \text{with $0<t_1 < t_2 < \ldots < t_M < s$.}
\]
Similarly, we can order the atoms of $\zeta_n$ in $[\d,\infty]\times
[0,s]$ so that for $M_n = \zeta_n([\d,\infty]\times[0,s])$ we have 
\[
 \zeta_n\bigl((\cdot \cap ([\d,\infty] \times [0,s] )\bigr) = \sum_{i
   = 1}^{M_n} \d_{(x_i^{(n)},t_i^{(n)})}, \quad \text{with $0 \leq t_1^{(n)} \leq
 t_2^{(n)} \leq \ldots \leq t_{M_n}^{(n)} \leq s$.} 
\]
The vague convergence of $\zeta_n$ to $\zeta$ and the fact that
$\zeta$ has no atoms on the boundary of $[\d,\infty] \times [0,s]$,
imply that for $n$ large enough $M_n = M$ and  
\be\label{vcpoints}
 \lim_{n\ra\infty} \max_{i\leq M} \left( |x_i^{(n)}-x_i| \vee |t_i^{(n)}-t_i|\right) = 0. 
\ee
Therefore, for $n$ sufficiently large, $0<t_1^{(n)} < t_2^{(n)} < \ldots < t_M^{(n)} < s$. 
For such $n$ we define a time-change function $\l_n^s$ of the interval
$[0,s]$ by $\l_n^s(0) = 0$, $\l_n^s(s) = s$, $\l_n^s(t_i) = t_i^{(n)}$
for all $i\leq M$, and extend it everywhere else by linear
interpolation.  Then,
\[
 \sup_{t\leq s} \left| \l_n^s(t) - t \right| = \max_{i\leq M} |t_i^{(n)} - t_i |,
\]
and, since $W_\d(\zeta_n,\vec\tau)$ and $W_\d(\zeta,\vec\tau)$ are
constant between jumps, 
\[
 \sup_{t\leq s} \left| W_\d(\zeta_n,\vec\tau)(\l_n^s(t)) - W_\d(\zeta,\vec\tau)(t) \right| = \max_{j\leq M} \left| \sum_{i=1}^j \left(x_i^{(n)} - x_i \right) \tau_i \right| \leq \sum_{i=1}^M \left|x_i^{(n)} - x_i \right| \tau_i. 
\]
Therefore, for $n$ sufficiently large, 
\[
 d_s^{J_1}\left( W_\d(\zeta_n,\vec\tau), W_\d(\zeta,\vec\tau) \right)
 \leq  \max \left\{  \max_{i\leq M} | t_i^{(n)} - t_i | , \sum_{i=1}^M
   \left|x_i^{(n)} - x_i \right| \tau_i  \right\}, 
\]
which vanishes as $n\ra\infty$ by \eqref{vcpoints}. This completes the
proof of \eqref{Wdlim} and thus of the lemma.  
\end{proof}

The relationship between $s_{\e}$ and $N_{\e}$ in \eqref{sewHNew} and
Lemma \ref{Hdcont} will allow us now to complete the proof of Proposition
\ref{WQLSn}.  
\begin{proof}[Proof of Proposition \ref{WQLSn}]
For  $\d>0$ we define a truncated version of $\S_\e$ by 
\[
 \S_{\e,\d}(t) = \e^{1/\k} \sum_{i=1}^{\lfloor t/\e\rfloor} \b_i
 \tau_i \ind{\e^{1/\k} \b_i > \d} - t \gamma_{\k,\e,\d}, \ t\geq 0\,,
\]
with 
\be\label{gdef}
 \gamma_{\k,\e,\d} = 
\begin{cases}
 0 & \k \in (0,1) \\
 E_Q\left[\b_1 \ind{\e \b_1 \in (\d,1]}\right] & \k =1 \\
 \e^{1/\k-1} E_Q\left[ \b_1 \ind{\e^{1/\k} \b_1 > \d} \right] & \k \in (1,2). 
\end{cases}
\ee
Then the quenched law of $\S_{\e,\d}$ is 
$s_{\e,\d} = \H_\d(N_{\e}) * \lm(-\gamma_{\k,\e,\d})$. 

If $N_{\l,\k}$ is the Poisson point process as in the statement of
Lemma \ref{Newlim}, then $\Pv(N_{\l,\k} \in C_\d \cap E) = 1$ for any
$\d>0$. Thus, Lemma \ref{Newlim}, Lemma \ref{Hdcont}, and the
continuous mapping
theorem imply that, under the measure $Q$, $ \H_\d(N_{\e}) \Lra 
\H_\d(N_{\l,\k})$, where $\l = C_0 \k$.  
Also, by \eqref{btail} and Karamata's theorem, 
\[
 \lim_{\e \ra 0} \gamma_{\k,\e,\d} = 
\begin{cases}
 C_0 \ln(1/\d) & \k = 1\\
 \frac{C_0 \k}{\k-1} \d^{-\k+1} & \k \in (1,2).
\end{cases}
\]
Since the mapping from $\mathcal{M}_1(D_\infty) \times \R$ to
$\mathcal{M}_1(D_\infty)$ defined by $(\mu,\gamma)\mapsto \mu *
\lm(\gamma)$ is continuous, we conclude that, under the measure $Q$, 
\be\label{SedWQL}
 s_{\e,\d} 
\Lra 
\begin{cases}
 \H_\d(N_{\l,\k}) & \k\in(0,1) \\
 \H_\d(N_{\l,\k}) * \lm(-\l \ln(1/\d) ) & \k = 1 \\
 \H_\d(N_{\l,\k}) * \lm(-\l \d^{-\k+1}/(\k-1)) & \k \in (1,2).  
\end{cases}
\ee

To relate \eqref{SedWQL} to a limit statement about $s_{\e,}$ we use
\cite[Theorem 3.2]{bCOPM}. To this end, it is enough to show that the
limit in  $\mathcal{M}_1\bigl( (D_\infty,d_\infty^{M_1})\bigr)$
\be\label{aslim}
 \lim_{\d\ra\infty} 
\begin{cases}
 \H_\d(N_{\l,\k}) & \k\in(0,1) \\
 \H_\d(N_{\l,\k}) * \lm(-\l \ln(1/\d) ) & \k = 1 \\
 \H_\d(N_{\l,\k}) * \lm(-\l \d^{-\k+1}/(\k-1)) & \k \in (1,2),  
\end{cases}
\qquad \text{exists } \Pv_\tau \text{-a.s.}
\ee
and 
\be\label{iplim}
 \lim_{\d\ra 0} \limsup_{\e\ra 0} Q\left( \rho^{M_1}(s_{\e,\d}, \,
   s_{\e}) \geq \eta \right) = 0, \quad \forall \eta>0.  
\ee
As in the case of Lemma \ref{WAEcor}, \eqref{iplim} will follow from
following, stronger, statement: for every $0<s<\infty$, 
\be\label{iplim2}
 \lim_{\d\ra 0} \limsup_{\e\ra 0} Q\left( E_\w \left[ \sup_{t\leq s} \left| \S_{\e,\d}(t) - \S_\e(t) \right| \right] \geq \eta \right) = 0, \quad \forall \eta>0.
\ee

Therefore, to complete the proof of Proposition \ref{WQLSn}, it
remains only to prove \eqref{aslim} and \eqref{iplim2}.  
We divide the proof of these statements into two cases: $\k\in(0,1)$ and $\k \in [1,2)$. 

\subsection{Case I: $\k \in (0,1)$}
To prove \eqref{aslim} we let $F_1 \subset \mathcal{M}_p( (0,\infty]
\times [0,\infty))$ be defined by 
\[
 F_1 = \left\{ \zeta = \sum_{i\geq 1} \d_{(x_i,t_i)} \, : \, \sum_{i\geq 1} x_i \ind{t_i \leq t} < \infty, \, \forall t<\infty \right\}.
\]
(Note that on the set $F_1$, the sum in the definition of
$W(\zeta,\vec\tau)$ is $\Pv_\tau$-a.s.\ finite.) Since $\Pv(N_{\l,\k}
\in F_1) = 1$ when $\k \in (0,1)$, it will be enough to show that
$H_\d(\zeta) \ra \H(\zeta)$ as $\d\ra 0$ for any $\zeta \in F_1$.  
Fix $\zeta = \sum_{i\geq 1} \d_{(x_i,t_i)} \in F_1$. For $0<s<\infty$
the obvious coupling of $W(\zeta,\vec\tau)$ and $W_\d(\zeta,\vec\tau)$
gives that  
\be\label{supWWd}
 \sup_{t\leq s} | W(\zeta,\vec\tau)(t) - W_\d(\zeta,\vec\tau)(t) |  
= \sup_{t\leq s} \left| \sum_{i\geq 1} x_i \tau_i \ind{x_i\leq \d, \, t_i \leq t} \right| 
= \sum_{i\geq 1} x_i \tau_i \ind{x_i\leq \d, \, t_i \leq s}. 
\ee
Since $\zeta \in F_1$,  finiteness of the mean of an exponential
random variable  shows that the sum on the right is finite with
probability one for any $\d>0$. Letting  $\d\ra 0$ the dominated
convergence theorem shows that $W_\d(\zeta,\vec\tau)$ converges almost
surely to $W(\zeta,\vec\tau)$ in the space $D_s$ in the uniform
metric, hence also in the $M_1$-metric, for any
$0<s<\infty$. Therefore, $W_\d(\zeta,\vec\tau)$ converges almost
surely to $W(\zeta,\vec\tau)$ in $D_\infty$ as $\d\ra 0$ and, since
a.s. convergence implies convergence in distribution, 
$\H_\d(\zeta)$ converges to $\H(\zeta)$ in the space
$\mathcal{M}_1\bigl( (D_\infty,d_\infty^{M_1})\bigr)$ as $\d\ra
0$. This proves \eqref{aslim}. Further, since  $W_\d(N_{\e},\vec\tau)
= \S_{\e,\d}$ and $W(N_{\e},\vec\tau) = \S_{\e}$, we have by
  \eqref{supWWd} with $\zeta = N_{\e}$, 
\[
 E_\w \left[ \sup_{t\leq s} \left| \S_{\e,\d}(t) -  \S_\e(t) \right| \right] 
= 
E_\w \left[ \sum_{i=1}^{\lfloor s/\e\rfloor} \e^{1/\k} \b_i \tau_i
  \ind{\e^{1/\k} \b_i \leq \d} \right] =  
\e^{1/\k} \sum_{i=1}^{\lfloor s/\e\rfloor} \b_i \ind{\e^{1/\k} \b_i \leq \d}.
\]
By Chebyshev's inequality and  stationarity of $\b_i$ under $Q$, 
\begin{align*}
 Q\left( E_\w \left[ \sup_{t\leq s} \left| \S_{\e,\d}(t) -  \S_\e(t) \right| \right] \geq \eta \right) 
&= Q\left( \e^{1/\k} \sum_{i=1}^{\lfloor s/\e\rfloor} \b_i \ind{\e^{1/\k} \b_i \leq \d} \geq \eta \right)\\
&\leq \frac{s \e^{1/\k - 1}}{\eta} E_Q\left[ \b_1 \ind{\e^{1/\k} \b_1 \leq \d} \right].
\end{align*}
Karamata's theorem and \eqref{btail} imply that $E_Q\left[\b_i \ind{\e^{1/\k} \b_i \leq \d}\right] \sim \tc \k/(1-\k) \d^{1-\k}\e^{1-1/\k}$ as $\e\ra 0$. Therefore, 
\[
 \lim_{\d\ra 0} \limsup_{\e\ra 0} Q\left( E_\w \left[ \sup_{t\leq s} \left| \S_{\e,\d}(t) -  \S_\e(t) \right| \right] \geq \eta \right) \leq \lim_{\d\ra 0} \frac{s \tc \k}{\eta(1-\k)} \d^{1-\k} = 0. 
\]
This proves \eqref{iplim2}.

\subsection{Case II: $\k \in [1,2)$}
To prove \eqref{aslim}, note that the right hand side of \eqref{aslim}
is the law (with respect to $\Pv_\tau$) of the random element of
$D_\infty$ 
\[
t \mapsto  W_\d(N_{\l,\k}, \vec\tau)(t) - 
\begin{cases}
 \l t \log(1/\d) & \k = 1\\
 \frac{\l \d^{1-\k} t}{\k-1} & \k \in (1,2). 
\end{cases}
\]
It was shown in the proof of Corollary \ref{AveragedTn} that this
random element converges almost surely, in the uniform metric, 
under the joint law $\Pv \times \Pv_\tau$ of $(N_{\l,\k},\vec\tau)$.
Therefore, convergence takes place in the $M_1$-metric as well. 
Fubini's theorem implies that the convergence also holds
$\Pv_\tau$-a.s.\ for almost every realization of the point process
$N_{\l,\k}$. Once again, a.s. convergence implies convergence in
distribution, so \eqref{aslim} holds.

To prove \eqref{iplim2}, note that the definition of
$\gamma_{\k,\e,\d}$ in \eqref{gdef} implies that
\begin{align}
\sup_{t\leq s} & | \S_{\e,\d}(t) - \S_\e(t) | \nonumber \\
&=  \sup_{ t\leq s} \left| \e^{1/\k} \sum_{i=1}^{\lfloor t/\e\rfloor} \b_i \tau_i \ind{\e^{1/\k} \b_i \leq \d }  - E_Q[\b_1 \ind{\e^{1/\k} \b_1 \leq \d}] \e^{-1+1/\k} t \right| \nonumber\\
&\leq \sup_{ t\leq s} \left| \e^{1/\k} \sum_{i=1}^{\lfloor t/\e\rfloor} \b_i (\tau_i-1) \ind{\e^{1/\k} \b_i \leq \d} \right| \label{taucentered} \\
&\quad + \sup_{ t\leq s} \left| \e^{1/\k} \sum_{i=1}^{\lfloor
    t/\e\rfloor} \left\{ \b_i \ind{\e^{1/\k} \b_i \leq \d} - E_Q[\b_1
    \ind{\e^{1/\k} \b_1 \leq \d}] \right\} \right| + \e^{1/\k}
E_Q[\b_1 \ind{\e^{1/\k} \b_1 \leq \d}],  \nonumber
\end{align}
where the last term comes from rounding in the number of terms in the
sum. This terms is, clearly, bounded by $\d$. 

For $\b_i$ fixed, the sum inside the supremum in the first term in
\eqref{taucentered} 
is a sum of independent, zero-mean random variables.  
Thus, by the Cauchy-Schwartz and  $L^p$-maximum inequalities for
martingales, 
\begin{align*}
E_\w \left[ \sup_{t\leq s} \left| \e^{1/\k} \sum_{i=1}^{\lfloor t/\e\rfloor} \b_i (\tau_i-1) \ind{\e^{1/\k} \b_i \leq \d} \right| \right] 
&\leq \left( 4 E_\w \left| \e^{1/\k} \sum_{i=1}^{\lfloor s/\e\rfloor} \b_i (\tau_i-1) \ind{\e^{1/\k} \b_i \leq \d} \right|^2  \right)^{1/2} \\
&=  2 \e^{1/\k} \left(\sum_{i=1}^{\lfloor s/\e\rfloor} \b_i^2 \ind{\e^{1/\k} \b_i \leq \d} \right)^{1/2} 
\end{align*}
Therefore, for $\eta>0$ fixed and $\d$ sufficiently small we have
\begin{align}
& Q\left( E_\w \left[ \sup_{t\leq s} | \S_{\e,\d}(t) - \S_\e(t) | \right] \geq \eta \right) \nonumber \\
& \leq Q\left(  \e^{2/\k} \sum_{i=1}^{\lfloor s/\e\rfloor} \b_i^2
  \ind{\e^{1/\k} \b_i \leq \d}  \geq \eta^2/36 \right) \nonumber \\
& \quad + Q\left( \sup_{t\leq s} \left| \e^{1/\k} \sum_{i=1}^{\lfloor t/\e\rfloor} \left\{ \b_i \ind{\e^{1/\k} \b_i \leq \d} - E_Q[\b_1 \ind{\e^{1/\k} \b_1 \leq \d}] \right\} \right| \geq \eta/3 \right). \label{bprocess} 
\end{align}
Notice that 
\begin{align*}
 \limsup_{\e\ra 0} Q\left(  \e^{2/\k} \sum_{i=1}^{\lfloor s/\e\rfloor} \b_i^2 \ind{\e^{1/\k} \b_i \leq \d}  \geq \eta^2/36 \right) 
&\leq \limsup_{\e\ra 0} \frac{36 s \e^{2/\k - 1}}{\eta^2}  E_Q\left[ \b_i^2 \ind{\e^{1/\k} \b_i \leq \d}  \right]  \\
&= \frac{36 s }{\eta^2} \frac{\tc \k}{2-\k} \d^{2-\k},
\end{align*}
where the last equality follows from \eqref{btail} and Karamata's
Theorem. This vanishes as $\d \ra 0$ since $\k<2$. It remains only to
show that the term in \eqref{bprocess} vanishes as first $\e\ra 0$ and then $\d\ra
0$. A similar statement (without the supremum inside the probability)
was shown in \cite[Lemma 5.5]{psWQLTn}. One can modify the techniques
of \cite{psWQLTn} to give a bound on \eqref{bprocess} that vanishes as
first $\e\ra 0$ and then $\d\ra 0$. Since the argument is somewhat 
technical, we postpone it until Appendix \ref{ProcessApp}.  
\end{proof}

\section{Weak weak quenched limits for the position of the random
  walk}
In this section we prove Theorem
\ref{WQLXn}. We start by defining the running maximum version of the
scaled path process of the random walk $\chi_\e$ in \eqref{chidef}. 
For $t\geq 0$, let $X_t^* = \max \{X_k: k\leq t\}$ denote the running
maximum of the RWRE. The corresponding random element in $ D_\infty$
is 
\[
 \chi_\e^*(t) 
= \begin{cases}
   \e^\k X_{t/\e}^* & \k \in (0,1) \\
   \frac{1}{\e \d(1/\e)^2} \left( X_{t/\e}^* - t \d(1/\e) \right) & \k = 1 \\
   \vp^{-1-1/\k} \e^{1/\k} \left(X_{t/\e}^* - t \vp / \e \right) & \k
   \in (1,2), 
  \end{cases}
\]
with the same function $\d$ in the case $\k=1$ as in \eqref{chidef}.
The path $\chi_\e^*$  is easier to compare to transforms of the
hitting times path $\T_\e$ than the path $\chi_\e$  is. The following
lemma shows that the quenched distributions of $\chi_\e$ and
$\chi_\e^*$ are asymptotically equivalent, since the distance between
$\chi_\e$ and $\chi_\e^*$ is typically very small. 
\begin{lem}\label{chichistar}
For any $s<\infty$ and $\eta>0$, 
\[
 \lim_{\e\ra 0} \P\left( \sup_{t \leq s} |\chi_\e(t) - \chi_\e^*(t)| \geq \eta \right) = 0. 
\]
\end{lem}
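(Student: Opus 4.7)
The plan is to reduce the lemma to a tail estimate on the maximum ``backtracking'' of the walk over a time interval of length $s/\e$, and then to control that maximum via standard RWRE excursion estimates.

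First I would observe that, thanks to the particular form of $\chi_\e$ and $\chi_\e^*$, the centering terms cancel and the difference is a pure scaling of the gap to the running maximum:
\[
\sup_{t\leq s}|\chi_\e(t) - \chi_\e^*(t)| = c_\e \max_{k\leq \fl{s/\e}}(X_k^* - X_k),
\]
where $c_\e = \e^\k$ for $\k\in(0,1)$, $c_\e = 1/(\e\d(1/\e)^2) \sim \e A^2(\log(1/\e))^2$ for $\k=1$ (using $\d(x)\sim x/(A\log x)$), and $c_\e = \vp^{-1-1/\k}\e^{1/\k}$ for $\k\in(1,2)$. A direct check gives $c_\e \log(1/\e)\to 0$ in all three cases. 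Hence it is enough to show that $\max_{k\leq \fl{s/\e}}(X_k^* - X_k) = O_\P(\log(1/\e))$.

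Next I would introduce the excursion backtracks $\xi_m = m - \min\{X_k : T_m\leq k\leq T_{m+1}\}$ and observe that if $X_k^* = m$ then $T_m\leq k < T_{m+1}$, so $X_k^* - X_k \leq \xi_m$. Since $X_N^*\leq N$, this gives the pointwise bound
\[
\max_{k\leq N}(X_k^* - X_k) \leq \max_{m\leq N}\xi_m, \qquad N = \fl{s/\e}.
\]
By the strong Markov property applied at $T_m$ and by translation invariance of the i.i.d.\ environment under $P$, one has $\xi_m \stackrel{d}{=} \xi_0$ under $\P$ for every $m\geq 0$. I would then bound the quenched probability $P_\w(\xi_0 > y) = P_\w(T_{-y} < T_1 \mid X_0 = 0)$ using the standard birth-death hitting-time formula, obtaining
\[
P_\w(\xi_0 > y) \leq \min\Big(1,\, \prod_{i=-y+1}^{0}\rho_i\Big) \leq \Big(\prod_{i=-y+1}^{0}\rho_i\Big)^{s}
\]
for any $0<s\leq 1$. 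Averaging and using independence of the $\rho_i$ gives $\P(\xi_0 > y) \leq (E_P\rho_0^s)^{y}$. Choosing any $s\in(0,\k\wedge 1)$ and invoking Assumption \ref{kasm} together with Hölder's inequality, we have $E_P\rho_0^s < 1$, so the tail is exponential: $\P(\xi_0 > y)\leq e^{-cy}$ for some $c>0$.

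The final step is a routine union bound: $\P(\max_{m\leq N}\xi_m > A\log N) \leq (N+1)e^{-cA\log N}$, which tends to $0$ as $N\to\infty$ for $A$ large enough. Applied with $N=\fl{s/\e}$ this delivers $\max_{m\leq N}\xi_m = O_\P(\log(1/\e))$, and combining with $c_\e \log(1/\e)\to 0$ finishes the proof. The main obstacle, such as it is, lies in step three: extracting a usable exponential tail for $\xi_0$ in the regime $\k\in(0,1)$, where $E_P\rho_0 > 1$ so the naive first-moment bound fails; the fractional moment trick above is what resolves this, and the rest of the argument is elementary.
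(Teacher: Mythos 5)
Your proposal is correct and follows essentially the same route as the paper: cancel the centering terms to reduce the supremum to a scaling factor times $\max_{k\leq s/\e}(X_k^*-X_k)$, then control that maximum by a union bound over the at most $O(1/\e)$ locations combined with an exponential tail for the depth of a backtracking excursion. The only difference is that the paper simply cites the exponential decay of $\P(T_{-x}<\infty)$ from the literature, whereas you derive the tail of $\xi_0$ from scratch via the birth-death hitting formula and the fractional moment $E_P[\rho_0^s]<1$ for $s\in(0,\k\wedge 1)$ -- a valid, self-contained substitute.
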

\begin{proof}
The definitions of $\chi_\e$ and $\chi_\e^*$ imply that for all $\e>0$
small enough, 
\[
 \sup_{t \leq s} |\chi_\e(t) - \chi_\e^*(t)| = \max_{k \leq s/\e} (X_k^* - X_k) 
\begin{cases}
 \e^\k & \k \in (0,1) \\
 \frac{1}{\e \d(1/\e)^2} & \k = 1 \\
 \e^{1/\k} & \k \in (1,2) 
\end{cases} 
\leq \e^{\k/4}\, \max_{k \leq s/\e} (X_k^* - X_k).
\]
If, for some $0\leq k\leq s/\e$, $X_k^* - X_k \geq \eta \e^{-\k/4}$,
then, for some location $0\leq j\leq s/\e$ the random walk returns to
$X_j-\cl{\eta \e^{-\k/4}}$ after visiting location $j$. Thus, by the
stationarity of the environment under the measure $\P$, 
\[
 \P\left( \sup_{t \leq s} |\chi_\e(t) - \chi_\e^*(t)| \geq \eta \right) 
\leq \P\left(\max_{k\leq s/\e} (X^*_k - X_k) \geq \eta \e^{-\k/4} \right) 
\leq (1+s/\e) \P( T_{- \cl{\eta \e^{-\k/4}}} < \infty). 
\]
Since $\P(T_{-x} < \infty)$ decays exponentially fast as $x\ra\infty$
(see \cite[Lemma 3.3]{gsMVSS}), the term on the right vanishes as $\e\ra 0$. 
\end{proof}

We now prove Theorem \ref{WQLXn}. According to 
Lemma \ref{chichistar}, we may and will replace  $\chi_\e$ by
$\chi_\e^*$ when proving the coupling part. We consider the cases $\k \in (0,1)$,
$\k = 1$, and $\k \in (1,2)$ separately.

\subsection{Case I: $\k \in (0,1)$}
We wish to compare $\chi_\e^*$ with $\mathfrak{I}\T_{\e^\k}$ where  $\mathfrak{I}$ is the inversion operator defined in \eqref{Idef}. 
To this end, note that
\[
 X_{t/\e}^* = \max\{ k \in \Z: T_k \leq t/\e \} = \sup \{ x \geq 0: T_{\fl{x/\e^\k}} \leq t/\e \} \e^{-\k} - 1. 
\]
Therefore, for every $t \geq 0$, 
\[
 \chi_\e^*(t) 
= \sup \{ x \geq 0: \T_{\e^\k}(x) \leq t \}  - \e^\k
= \mathfrak{I} \T_{\e^\k} (t) - \e^{\k}, 
\]
which implies that  $\sup_{t < \infty} | \chi_\e^*(t) - \mathfrak{I} \T_{\e^\k}(t) | =\e^\k$. 
Hence, we obtain the stated coupling in Theorem \ref{WQLXn}. 
By Lemma \ref{WAEcor}, it remains only to show that, under the measure
$P$, 
\be\label{mcI}
m_{\e^{\k}} \circ \mathfrak{I}^{-1} \Lra \H(N_{\l,\k}) \circ \mathfrak{I}^{-1}. 
\ee

To this end, first note that $\mathfrak{I}$ is continuous on the subset $D_{\uparrow\uparrow}^+
\subset D_\infty^+$ of \emph{strictly} increasing functions, when the
$M_1$ topology is used both on the domain and the range  (see
\cite[Corollary 13.6.4]{wSPL} for even topologically stronger statement). 
Therefore, the mapping theorem implies that the function $\mu \mapsto
\mu \circ \mathfrak{I}^{-1}$ on $\mathcal{M}_1(D_\infty)$ is
continuous on the subset of measures $\{ \mu \in
\mathcal{M}_1(D_\infty): \, \mu( D_{\uparrow\uparrow}^+ ) = 1 \}$. 
In the notation introduced in \eqref{Zlkdef}, $\H(N_{\l,\k}) =
\Pv_\tau(Z_{\l,\k} \in \cdot)$. Since $Z_{\l,\k}$ is a $\k$-stable
subordinator under $\Pv \times \Pv_\tau$, then
$\H(N_{\l,\k})( D_{\uparrow\uparrow}^+ ) = \Pv_\tau (Z_{\l,\k} \in
D_{\uparrow\uparrow}^+ )=1$ for almost every realization of
$N_{\l,\k}$, and so \eqref{mcI} follows from Theorem
\ref{WQLTn1} and the continuous mapping theorem.   
%

\subsection{Case II: $\k \in (1,2)$}
We start by replacing the piecewise constant path of the hitting times
in \eqref{Tedef} by a piecewise linear and continuous version via 
linear interpolation. Specifically, for $x \in \Z$ and $\theta \in [0,1)$ we let
\[
 \tilde{T}_{x+\theta} = (1-\theta)T_x + \theta T_{x+1}. 
\]
Correspondingly, we will define $\tilde{\T}_\e(t) = \e^{1/\k}
(\tilde{T}_{t/\e} - t/(\e \vp))$, $t\geq 0$. The following lemma shows
that the $M_1$-distance between $T_\e$ and $\tT_\e$ is 
typically small. 
\begin{lem}\label{TtTcouple}
 For any $\eta>0$, 
$\lim_{\e \ra 0} \P( d_\infty^{M_1}(\T_\e, \tT_\e) \geq \eta ) = 0$. 
\end{lem}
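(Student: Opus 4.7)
The plan is to show that $E_\P[d_\infty^{M_1}(\T_\e, \tT_\e)] = O(\e^{1/\k}) \to 0$ as $\e \to 0$, from which the claim follows by Markov's inequality. The crucial observation is that $\T_\e$ and $\tT_\e$ coincide at every point $t = \e n$, $n \in \Z_+$, both equal to $V_n := \e^{1/\k}(T_n - n/\vp)$. Writing $c := \e^{1/\k}/\vp$ and $\Delta_n := \e^{1/\k}(T_{n+1}-T_n)$, the completed graph of $\T_\e$ on $[\e n, \e(n+1)]$ is an ``L-shape'': a descending linear segment from $(\e n, V_n)$ to $(\e(n+1), V_n - c)$ followed by a vertical ascending segment at $u = \e(n+1)$ up to $V_{n+1}$, while the completed graph of $\tT_\e$ on the same interval is the single line segment joining the two shared endpoints. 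A direct calculation also gives $|\T_\e(t) - \tT_\e(t)| = (t - \e n)\Delta_n/\e$ for $t \in [\e n, \e(n+1))$.

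The main technical step is constructing matching parametric representations on each full interval $[\e n, \e(n+1)]$. Set $\beta_n := c/(c+\Delta_n) \in (0,1)$ and devote sub-parameter $s \in [0, \beta_n]$ of $\T_\e$'s representation to traversing the descending segment uniformly in the horizontal coordinate, with $s \in [\beta_n, 1]$ devoted to the vertical jump. Let $\tT_\e$'s representation have horizontal coordinate $u'(s) = \e n + \e s$ throughout. Direct computation shows horizontal separation at most $\e(1-\beta_n) \leq \e$ and vertical separation at most $(1-\beta_n)c + \beta_n\Delta_n = 2c\Delta_n/(c+\Delta_n) \leq 2c$, giving a per-interval $M_1$ distance of at most $\max(\e, 2c)$, deterministically and uniformly in $\Delta_n$. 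This balancing choice of $\beta_n$ is the key technical point: without it, the vertical error $(1-\beta)c + \beta\Delta_n$ would grow with the (random, potentially large) jump size $\Delta_n$, and the concatenated bound would not decay.

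Concatenating these per-interval representations (compressed into successive subintervals of $[0,1]$) over all full intervals contained in $[0, t]$, and handling the partial interval $[\e \lfloor t/\e\rfloor, t]$ by horizontal matching of the two linear pieces (which contributes vertical error at most $|\T_\e(t) - \tT_\e(t)|$), we obtain
\[
d_t^{M_1}(\T_\e^{(t)}, \tT_\e^{(t)}) \leq \max(\e, 2c) + |\T_\e(t) - \tT_\e(t)|
\]
for every $t > 0$. Using this bound in the definition of $d_\infty^{M_1}$ and taking $\P$-expectations,
\[
E_\P[d_\infty^{M_1}(\T_\e, \tT_\e)] \leq \max(\e, 2c) + \int_0^\infty e^{-t}\, E_\P[|\T_\e(t) - \tT_\e(t)|]\, dt.
\]
Since $E_\P[T_{n+1}-T_n]$ is uniformly bounded for $\k > 1$ (as follows from $T_n/n \to 1/\vp$ a.s.\ together with uniform integrability), we have $E_\P[\Delta_n] = O(\e^{1/\k})$ uniformly in $n$, and hence the integral above is $O(\e^{1/\k})$. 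Combined with $\max(\e, 2c) = O(\e^{1/\k})$, this yields $E_\P[d_\infty^{M_1}(\T_\e, \tT_\e)] = O(\e^{1/\k}) \to 0$, and Markov's inequality completes the proof.
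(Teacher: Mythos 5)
The proposal is correct and follows essentially the same route as the paper: both proofs match parametric representations of the two completed graphs at the shared grid points $(\e n, \e^{1/\k}(T_n - n/\vp))$, obtain a deterministic $O(\e \vee \e^{1/\k})$ bound on each full interval (your balancing choice of $\beta_n$ plays the role of the paper's ``parallel-slope'' matching), and control the last partial interval by the jump $\e^{1/\k}(T_{\fl{t/\e}+1}-T_{\fl{t/\e}})$. The only real difference is the concluding step: you bound $\E[\,d_\infty^{M_1}(\T_\e,\tT_\e)\,]$ via $\E T_1 = 1/\vp<\infty$ and Markov, while the paper uses only $T_{\fl{t/\e}+1}-T_{\fl{t/\e}}\eid T_1<\infty$ a.s.\ and convergence in probability; both are valid for the stated lemma (where $\k\in(1,2)$), though the paper's variant carries over unchanged to the $\k\le 1$ analogues it also invokes, where $\E T_1=\infty$.
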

\begin{proof} 
As in  Lemma \ref{Dt2Dinfty}, it is enough to prove that for every
$0<t<\infty$, $\P( d_t^{M_1}(\T_\e, \tT_\e) \geq \eta )\to 0$ for
every $\eta>0$. We use a matching of the kind similar to that
constructed in the proof of Lemma \ref{TnTnuncouple}. 
We will describe this matching in the case $\k \in (1,2)$, but a similar argument works when $\k = 1$ or $\k \in (0,1)$. 
For every
$k=0,1,2,\ldots$ such that $\e k \leq t$ we arrange both parametric
representations to contain the point $\bigl(\e  k, 
\e^{1/\k}(T_k-k/\vp)\bigr)$. If $\e (k+1) \leq t$, then between the points $\bigl(\e  k, 
\e^{1/\k}(T_k-k/\vp)\bigr)$ and $\bigl(\e  (k+1), 
\e^{1/\k}(T_{k+1}-(k+1)/\vp)\bigr)$ we keep the parametrization of $\tT_\e$
at the former point  until the parametrization of $\T_\e$ reaches the
point $\bigl(\e  (k+1), \e^{1/\k}(T_k-(k+1)/\vp)\bigr)$, at which time we
complete the two parametrizations in the interval by keeping the
slope between the matched points equal to $-\e^{1/\k-1}/\vp$. 
Clearly, within this interval the
horizontal distance between the two parametrizations is at most $\e$
and the vertical distance is at most $\e^{1/\k}/\vp$. If $\e k<t<\e
(k+1)$, then we use the obvious  vertical matching of the
parameterizations, with equal horizontal components, and vertical
components at most $\e^{1/\k} (T_{\fl{t/\e}+1} - T_{\fl{t/\e}})$ apart. Therefore, for
$\e$ small enough 
\[
 d_t^{M_1}( \T_\e, \tT_\e ) \leq \max\left\{ \e^{1/\k}/\vp, \,
   \e^{1/\k} ( T_{\fl{t/\e}+1} - T_{\fl{t/\e}}) \right\}.  
\]
Since $T_{\fl{t/\e}+1} - T_{\fl{t/\e}}$ has,  under the measure $\P$,
the same distribution as $T_1$ we conclude that 
\be\label{TtTcouplet}
\limsup_{\e\ra 0} \P\left( d_t^{M_1}( \T_\e, \tT_\e ) \geq \eta \right) \leq \lim_{\e\ra 0} \P\left(  T_{\fl{t/\e}+1} - T_{\fl{t/\e}} \geq \e^{-1/\k} \eta \right) = 0,
\ee
as required. 
\end{proof}

Note that $x\mapsto \tilde{T}_x$ is a strictly increasing and
continuous function on $[0,\infty)$. Let $\phi(t)$ be its
inverse. Then $\tilde{T}_{\phi(t)} = t$ for all $t\geq 0$.  
If $T_n \leq t < T_{n+1}$ then $X_t^* = n \leq \phi(t) < n+1$, so that
$\sup_{t\geq 0} |X_t^* - \phi(t)| \leq 1$. 
One consequence of this comparison is that
\be\label{pwlim}
 \lim_{t\ra\infty} \frac{\phi(t)}{t} 
= \lim_{n\ra\infty} \frac{X_n^*}{n} = \vp, \qquad \P\text{-a.s.}
\ee
Next define $\phi_\e(t) = \e \phi(t/\e)$ for $\e>0$ and $\phi_0(t) =
\vp t$. Then, \eqref{pwlim} implies that $\phi_\e$ converges pointwise
to $\phi_0$ as $\e \ra 0$. Moreover, since $\phi_\e$ and $\phi_0$ are
monotone increasing and $\phi_0$ is continuous, we conclude that
$\phi_\e$ converges uniformly on compact subsets to $\phi_0$. In
particular, $\lim_{\e\ra 0} d_\infty^{U}(\phi_\e,
\phi_0) = 0$, $\P$-a.s. 

Now, recalling the definition of $\tT_\e$ we obtain that 
\begin{align*}
 \tT_\e(\phi_\e(t)) &= \e^{1/\k}( \tT_{\phi_\e(t)/\e} - \phi_\e(t)/(\e \vp)) \\
&= \e^{1/\k}( \tT_{\phi(t/\e)} - \phi(t/\e)/\vp) \\
&= -\vp^{-1} \e^{1/\k} ( \phi(t/\e) - t\vp/\e ) \\
&= -\vp^{-1} \e^{1/\k} ( X_{t/\e}^* - t\vp/\e ) + \vp^{-1} \e^{1/\k}( X_{t/\e}^* - \phi(t/\e) ) \\
&= -\vp^{1/\k} \chi_\e^*(t) + \vp^{-1} \e^{1/\k}( X_{t/\e}^* - \phi(t/\e) ).
\end{align*}
Since $| X_{t/\e}^* - \phi(t/\e)| \leq 1$ for all $t$, this implies that 
\be\label{chitTpe}
 d_\infty^{U}( \chi_\e^*, \, -\vp^{-1/\k} \tT_\e\circ \phi_\e) \leq \vp^{-1-1/\k} \e^{1/\k}. 
\ee
Next, we compare $\tT_\e\circ \phi_\e$ with $\T_\e\circ \phi_0$. 
To this end, let $\eta, \eta'>0$ be fixed. By Corollary
\ref{AveragedTn}, the laws of $(\T_\e)$ under $\P$ are tight in
$(D_\infty, d_\infty^{M_1})$. Therefore, we can choose a compact
subset $K \subset D_\infty$ so that $\P( T_\e \in K) \geq 1-\eta'$
for all $\e$ small enough. Further, the composition function
$\psi(x,y)=x\circ y$ is continuous at any point $(x,\phi_0) \in D_\infty \times
C_{\uparrow\uparrow}^+\subset D_\infty \times D_\infty$.  Therefore,
it is uniformly continuous (with the $d_\infty^{M_1}$ metric on each
coordinate) at the points of the compact set $K \times
\{\phi_0\}$. Choose now $\d>0$ such that $d_\infty^{M_1}(x'
\circ \phi', x \circ \phi_0) < \eta$ whenever $x \in K$,
$d_\infty^{M_1}(x,x')<\d$, and $d_\infty^{M_1}(\phi_0, \phi') < \d$.  
Then 
\begin{align*}
&\limsup_{\e\ra 0} \P( d_\infty^{M_1}( \tT_\e \circ \phi_\e, \, \T_\e \circ \phi_0 ) \geq \eta ) \\
&\quad \leq \limsup_{\e\ra 0} \P( \T_\e \notin K ) + \P(
d_\infty^{M_1}( \T_\e, \tT_\e ) \geq \d ) + \P( d_\infty^{M_1}(
\phi_\e, \phi_0 ) \geq \d ) \\ 
&\quad \leq \eta',
\end{align*}
where the last inequality follows from our choice of the compact set $K$, Lemma \ref{TtTcouple}, and the almost sure convergence of $\phi_\e$ to $\phi_0$. 
Since $\eta'>0$ was arbitrary, we see that $\P(  d_\infty^{M_1}(
\tT_\e \circ \phi_\e, \, \T_\e \circ \phi_0 ) \geq \eta ) \ra 0$ for
any $\eta>0$. Combining this with \eqref{chitTpe} we conclude that  
\be\label{chiTp0}
 \lim_{\e\ra 0} \P( d_\infty^{M_1}( \chi_\e^*, -\vp^{-1/\k} \T_\e
 \circ \phi_0 ) \geq \eta ) = 0, \quad \forall \eta > 0.  
\ee
Finally, note that the definition of $\T_\e$ and $\phi_0$ imply that 
\[
 \vp^{-1/\k} \T_\e (\phi_0(t)) = \vp^{-1/\k} \e^{1/\k}( T_{\vp t/\e} - t/\e ) = \T_{\e/\vp}(t),
\]
so that \eqref{chiTp0} proves the coupling part of Theorem \ref{WQLXn}
in the case $\k \in (1,2)$. Since $m_{\e/\vp} \circ
\mathfrak{R}^{-1}$ is the quenched distribution of $-\T_{\e/\vp}$, and 
$\mathfrak{R}$ is a continuous operator, the continuous mapping
theorem implies that $m_{\e} \circ
\mathfrak{R}^{-1}\Lra \mu_{\l,\k} \circ \mathfrak{R}^{-1}$. The
coupling now implies that we also have $p_{\e,\w} \Lra
\mu_{\l,\k} \circ \mathfrak{R}^{-1}$.  

\subsection{Case III: $\k = 1$}
The proof here is similar to the proof in the case $\k \in (1,2)$, so
we will omit some of the details. 
As above, let $\tilde{T}_x$ and $\phi(t)$ be as above, so that
$\tilde{T}_{\phi(t)} = t$.  
We claim that 
\be\label{phiasymp}
\lim_{t\ra\infty} \frac{\phi(t)}{t/\log t} = \frac{1}{A} \qquad \text{ in } \P \text{-probability,}
\ee
where $A$ is the positive constant from Theorem
\ref{averagedlimlaw}. To see this, first note that  by Theorem
\ref{averagedlimlaw}, 
$\lim_{n\ra\infty} T_n/(n \log n) = A$ in $\P$-probability, hence also
$\lim_{x\ra\infty} \tilde{T}_x/(x \log x) = A$ in $\P$-probability.  
Using $x=\phi(t)$ gives us 
\[
 \lim_{t\ra\infty} \frac{t}{\phi(t)\log(\phi(t))} = A \qquad \text{ in } \P \text{-probability,}  
\]
which proves \eqref{phiasymp}. 

The function $\d(x) = \sup\{u>0:\, uD(u)\leq x\}$, $x>0$, satisfies 
$\d(x) \sim x/(A \log x)$ as $x\ra\infty$ and 
$\d(x)D(\d(x)) = x+o(\d(x))$ as $x \ra \infty$.  
We define $\phi_\e \in D_\infty$ by $\phi_\e(t) = \phi(t/\e)/\d(1/\e)$. 
Then the asymptotics of $\phi$ from \eqref{phiasymp} and the
asymptotics of $\d$ imply that for any $t\geq 0$, 
\[
 \lim_{\e\ra 0} \phi_\e(t) = \lim_{\e\ra 0} \frac{\phi(t/\e)}{\d(1/\e)} 
= t \qquad \text{in } \P\text{-probability}. 
\]
Once again, since $\phi_\e$ is non-decreasing, and the identity
function is continuous, $\phi_\e$ converges uniformly on compact
subsets (and thus also in the $d_\infty^{J_1}$ metric), in
$\P$-probability, to the identity function. 

Let $\tT_\e(t) = \e ( \tilde{T}_{t/\e} - t/\e D(1/\e) )$, $t\geq
0$. Then 
\begin{align*}
 \tT_{\d(1/\e)^{-1}}(\phi_\e(t)) &= \d(1/\e)^{-1} \left( \tilde{T}_{\phi_\e(t) \d(1/\e) } - \phi_\e(t) \d(1/\e) D( \d(1/\e) )  \right) \\
&= \d(1/\e)^{-1} \left( \tilde{T}_{\phi(t/\e)} - \frac{ \phi(t /\e) }{
    \d(1/\e) } \left( \frac{1}{\e} + o(\d(1/\e)) \right) \right) \\ 
&= \d(1/\e)^{-1} \left( t/\e - \frac{ X_{t/\e}^* + \bigo(1) }{ \d(1/\e) } \left( \frac{1}{\e} + o(\d(1/\e)) \right) \right) \\
&= \frac{1}{\e \d(1/\e)^2} \left( t \d(1/\e) - X_{t/\e}^* \right) + 
o(1) \frac{X_{t/\e}^*}{\d(1/\e)}  + \bigo\left(  \frac{1}{\e \d(1/\e)^2}  \right)\\
&= -\chi_\e^*(t) + o(1) \frac{X_{t/\e}^*}{\d(1/\e)} +
\bigo\left(  \frac{1}{\e \d(1/\e)^2}  \right), 
\end{align*}
where in the third equality we used that $|\phi(t) - X_{t}^*| \leq 1$ for all $t$. 
Since $1/(\e \d(1/\e)^2) \sim A^2 \e \log^2(1/\e) \ra 0$ as $\e\ra 0$,
while $X_{t/\e}^*/\d(1/\e)$ converges in probability by Theorem
\ref{averagedlimlaw}, this implies that 
\be\label{chitTpe1}
 \lim_{\e\ra 0} d_\infty^{U}(\chi_\e^*, -\tT_{\d(1/\e)^{-1}} \circ
 \phi_\e ) = 0 \qquad \text{ in } \P \text{-probability.}
\ee
As in case $\k \in (1,2)$ we can use the fact that $\phi_\e$ converges
to the identity function to show that for any $\eta>0$, 
\be\label{tTpeT}
 \lim_{\e\ra 0} \P\left( d_\infty^{M_1}( \tT_{\d(1/\e)^{-1}} \circ
   \phi_\e, \tT_{\d(1/\e)^{-1}} ) \geq \eta \right) = 0. 
\ee
Combining \eqref{chitTpe1}, \eqref{tTpeT} and Lemma \ref{chichistar}
establishes the coupling part of Theorem \ref{WQLXn}, and the rest is
the same as in the case $\k \in (1,2)$.

\section{$\mathcal{M}_1(\R)$-valued Stable L\'evy process limits}\label{StableRPD}
In this section we discuss  Corollary \ref{PathMeasureCor}.
We begin with a short proof of the convergence of the finite dimensional distributions of $\Phi(m_\e)$. 
Let $m\geq 1$ and $0 \leq t_1 < t_2 < \ldots < t_m$ be given, and define $\Phi_{t_1,\ldots,t_m}: \mathcal{M}_1(D_\infty) \ra \mathcal{M}_1(\R)^m$ by 
\[
 \Phi_{t_1,t_2,\ldots,t_m}(\mu) = ( \Phi_{t_1}(\mu),  \Phi_{t_2}(\mu), \ldots, \Phi_{t_m}(\mu)). 
\]
It is easy to see that $\Phi_{t_1,t_2,\ldots,t_m}$ is continuous at
every $\mu \in \mathcal{M}_1(D_\infty)$ concentrated on paths that are
continuous at $t_i$, $i=1,2,\ldots, m$; see p. 383 in \cite{wSPL}.  Since $m_\e \Lra \mu_{\l,\k}$
and $\mu_{\l,\k}$ is, with probability, one concentrated on paths that
are continuous at $t_i$, $i=1,2,\ldots,m$, then the continuous mapping
theorem implies that $\Phi_{t_1,t_2,\ldots,t_m}(m_\e) \Lra
\Phi_{t_1,t_2,\ldots,t_m}(\mu_{\l,\k})$. This proves the convergence
of finite dimensional distributions claimed in Corollary
\ref{PathMeasureCor}.  

We now turn to the stated properties of the random measure-valued path $\Phi(\mu_{\l,\k})$, namely that $\Phi(\mu_{\l,\k}) $ is a stable L\'evy process on $\mathcal{M}_1(\R)$.
We start by recalling 
the notions of stable random variables and L\'evy
processes on $\mathcal{M}_1(\R)$; the reader is referred to
\cite{stRPD} for more details. 

\begin{defn}
 A $\mathcal{M}_1(\R)$-valued random variable $\mu$ is a \textbf{stable random variable on $\mathcal{M}_1(\R)$} if for any $n\geq 2$ there exist constants $b_n \in \R$ and $c_n >0$ such that 
\[
 \left( \mu_1 * \mu_2 * \cdots * \mu_n \right)(b_n + c_n^{-1} \cdot ) \overset{\text{Law}}{=} \mu(\cdot). 
\]
Here $\mu_1, \mu_2,\ldots \mu_n$ are independent copies of $\mu$. 
Moreover, if $b_n = 0$ for every $n\geq 2$, then $\mu$ is a \textbf{strictly stable random variable on $\mathcal{M}_1(\R)$}. 
\end{defn}
\begin{defn}\label{LevyDef}
 A $\mathcal{M}_1(\R)$-valued stochastic process $\{ \Xi(t) \}_{t\geq 0}$ is a \textbf{L\'evy process on $\mathcal{M}_1(\R)$} if 
there exists a two parameter family of $\mathcal{M}_1(\R)$-valued random variables $\{\Xi_{s,t}\}_{0\leq s<t}$ such that $\Xi(t) = \Xi_{0,t}$ and 
\begin{enumerate}
 \item $\Xi(0) = \d_0$ with probability one. 
 \item\label{IndepInc} For any $n\geq 2$ and $0=t_0<t_1<t_2<\cdots <
   t_n=t$, $\{\Xi_{t_{i-1},t_i}\}_{i=1}^n$ are independent and  
\[
\Xi(t) =  \Xi_{t_0,t_1} * \Xi_{t_1,t_2} * \cdots * \Xi_{t_{n-1}, t_n}, \quad \text{almost surely}. 
\]
 \item\label{Stationary} For any $0<s<t$, $\Xi(t-s) \overset{\text{Law}}{=} \Xi_{s,t}$.  
 \item\label{StochCont} For any fixed $t_0\geq 0$, the process $\{
   \Xi(t) \}_{t\geq 0}$  is continuous at $t_0$ in probability. 
 \item\label{Cadlag} There is an event of probability 1 on which every 
path $\{t \mapsto \Xi(t) \}$ is in $ D_\infty(\mathcal{M}_1(\R))$. 
\end{enumerate}
\end{defn}
\begin{rem}
 Part \eqref{IndepInc} of Definition \ref{LevyDef} is a version of the
 independent increments property for stochastic processes with values in $\mathcal{M}_1(\R)$ with convolution of measures playing the role of addition. 
The version of the  independent increments property used in
 the above definition is necessary due to absence of 
an inverse operation to convolution. 
\end{rem}
\begin{defn}
A L\'evy process $\{ \Xi(t) \}_{t\geq 0}$ on $\mathcal{M}_1(\R)$ is a
\textbf{(strictly) stable L\'evy process on $\mathcal{M}_1(\R)$} if
for every fixed $t\geq 0$, $\Xi(t)$ is a (strictly) stable random
variable on $\mathcal{M}_1(\R)$.  
\end{defn}

We are now ready to show that $\Phi(\mu_{\l,\k})$ is a stable L\'evy
process on $\mathcal{M}_1(\R)$ and is strictly stable when $\k\neq 1$.  
It is easy to check that for any $t\geq 0$, $\Phi_t(\mu_{\l,\k})$ is a
stable random variable on  $\mathcal{M}_1(\R)$ and is strictly stable if $\k\neq 1$ (see the Remark 1.5 and the paragraph following Remark 1.6 in \cite{psWQLTn}),  so we will concentrate on showing that $\Phi(\mu_{\l,\k})$ is a L\'evy process on $\mathcal{M}_1(\R)$.  

We already know that the paths of $\Phi(\mu_{\l,\k})$ are in $
D_\infty(\mathcal{M}_1(\R))$; see the discussion before Theorem
\ref{PathMeasureCor}.  
It is also obvious that $\Phi_0(\mu_{\l,\k}) = \d_0$ with probability one
since $N_{\l,\k}( (0,\infty] \times \{0\} )  = 0$ with probability
one. Next, recall the stochastic process $Z_{\l,\k}$  defined in \eqref{Zlkdef}.  
Then $\Phi_t(\mu_{\l,\k})$ is the distribution of $Z_{\l,\k}(t)$ under
the measure $\Pv_\tau$, and we define 
for any $0\leq s<t$ 
\[
 \Phi(\mu_{\l,\k})_{s,t} = \Pv_\tau \left(Z_{\l,\k}(t) - Z_{\l,\k}(s) \in \cdot \right). 
\] 
Then, the independent increments condition \eqref{IndepInc} in
Definition \ref{LevyDef} follows from the fact that $\{ N_{\l,\k}(
\cdot \cap \bigl( (0,\infty] \times (t_{i-1},t_i] \bigr)\}_{i=1}^n$ are independent
for any $0=t_0 <  t_1 < \cdots < t_n$, and the stationarity condition
\eqref{Stationary} in Definition \ref{LevyDef} follows from the shift
invariance of the Lebesgue measure governing the time component of the
Poisson random measure $N_{\l,\k}$. 
Finally, stochastic continuity of $\Phi(\mu_{\l,\k})$ at fixed points
follows from the fact that for each fixed $t_0$, $N_{\l,\k}((0,\infty]
\times \{t_0\}) =0$ with probability 1.

\subsection{Topologies on $D_\infty(\mathcal{M}_1(\R))$}

We now give a brief discussion of the
difficulties of  extending Corollary \ref{PathMeasureCor} to a full
weak convergence $\Phi(m_\e) \Lra \Phi(\mu_{\l,\k})$ of
$\mathcal{M}_1(\R)$-valued path processes.  
It is first necessary to decide on a topology for
$D_\infty(\mathcal{M}_1(\R))$, the space of \cadlag\ paths taking
values in the space of probability measures on $\R$. 
Recall that the Prohorov metric $\rho$ on $\mathcal{M}_1(\R)$ induces
the topology of convergence in distribution and that
$(\mathcal{M}_1(\R),\rho)$ is a Polish space.  
Then, both the uniform and the $J_1$-topologies have natural extensions to $D_\infty(\mathcal{M}_1(\R))$. 

In the proof of Theorem \ref{WQLTn1}, it was necessary to equip
$D_\infty$ with the $M_1$-topology to accommodate the fact that the
macroscopic jumps of the process of ladder location hitting times were
an accumulation of smaller jumps $T_i-T_{i-1}$ for $i$ between
consecutive ladder locations. 
The $M_1$-topology naturally accomodates such accumulations of jumps
while the $J_1$-topology does not.  
A similar phenomenon occurs when trying to establish weak convergence
$\Phi(m_\e) \Lra \Phi(\mu_{\l,\k})$ in the space of
probability measure-valued functions, and thus it is natural to try to 
equip $D_\infty(\mathcal{M}_1(\R))$ with a Skorohod
$M_1$-topology. This is less standard than defining the Skorohod
$J_1$-topology\footnote{For instance, Whitt defines the Skorohod
  $M_1$-topology on $D_t(\Psi)$ if $\Psi$ is a separable Banach space
  \cite[p. 382]{wSPL}, but $\mathcal{M}_1(\R)$ is not a Banach
  space.}, 
but, since convex combinations $(1-\theta)\mu + \theta \pi$ of two
probability measures form a ``line segment'' between $\mu$ and $\pi$
in $\mathcal{M}_1(\R)$, one can define the $M_1$-topology and metric on
$D_t(\mathcal{M}_1(\R))$ and $D_\infty(\mathcal{M}_1(\R))$ in the
natural way. 
Moreover, the resulting $M_1$-topology on
$D_\infty(\mathcal{M}_1(\R))$ defined in this way is the topology of a complete
separable metric space. 

Unfortunately, to this point we have been unable to prove weak
convergence  $\Phi(m_\e) \Lra \Phi(\mu_{\l,\k})$ in the $M_1$-topology
(as defined above) on $D_\infty(\mathcal{M}_1(\R))$. In fact, some
preliminary computations suggest that $\{\Phi(m_\e) \}_{\e>0}$ is not
a tight family of $D_\infty(\mathcal{M}_1(\R))$-valued random
variables in this topology, and thus a weaker topology on
$D_\infty(\mathcal{M}_1(\R))$ may be needed. We hope to address this in a future paper.

We close this section with an example that demonstrates some of the
difficulties establishing weak convergence  $\Phi(m_\e) \Lra
\Phi(\mu_{\l,\k})$. A natural approach to proving $\Phi(m_\e) \Lra
\Phi(\mu_{\l,\k})$ would be to apply Theorem \ref{WQLTn1} and the
continuous mapping theorem. Unfortunately, the mapping $\Phi:
\mathcal{M}_1(D_\infty) \ra D_\infty(\mathcal{M}_1(\R))$ is not
continuous.  The following example demonstrates
this lack of continuity even when in  $\mathcal{M}_1(D_\infty)$
we endow the space $D_\infty$ 
with the strongest of the Skorohod topologies, the $J_1$-topology, and
endow $D_\infty(\mathcal{M}_1(\R))$ with the weakest of the Skorohod
topologies, the $M_2$-topology. 

We restrict everything to the interval $[0,1]$ and consider
real-valued stochastic processes $X=(X(t), \, 0\leq t\leq 1)$ and 
$X_n=(X_n(t), \, 0\leq t\leq 1)$, $n=1,2,\ldots$, on the probability
space $\bigl( [0,1], {\mathcal B}, \text{Leb}\bigr)$, defined by 
$$
X(t; \omega) = \left\{ \begin{array}{ll}
0, & 0\leq t<\frac12, \\
1, & \frac12\leq t\leq 1, \, 0\leq \omega\leq\frac12,\\
2, & \frac12\leq t\leq 1, \, \frac12< \omega\leq 1,
\end{array}
\right.
$$
and
$$
X_n(t; \omega) = \left\{ \begin{array}{ll}
0, & 0\leq t<\frac12 - \frac{1}{2^{n+1}}, \\
1, & \frac12 - \frac{1}{2^{n+1}}\leq t\leq 1, \, 0\leq \omega\leq
\frac12,\\
0, & \frac12 - \frac{1}{2^{n+1}}\leq t< \frac12 +\frac{1}{2^{n+1}}, \,
\frac12< \omega\leq 1, \\
2, & \frac12 +\frac{1}{2^{n+1}}\leq t\leq 1, \, \frac12< \omega\leq 1,
\end{array}
\right.
$$
$n=1,2,\ldots$. Clearly, each process $X$ and $X_n$ has its sample
paths in $D_1=D[0,1]$. We denote by $\mu$ (correspondingly, $\mu_n$)
the probability measures these processes generate on the cylindrical
sets in $D[0,1]$.

Obviously, for any $\omega\in [0,1]$, $d^{J_1}(X,X_n)\leq 2^{-(n+1)}$,
so, with probability 1, $X_n\to X$ in $D[0,1]$ equipped with the
$J_1$-topology, so that $\rho^{J_1}(\mu_n, \mu) \ra 0$. 

Next, in the notation of \eqref{e:projection}, we have
$$
\Phi_t(\mu) = \left\{ \begin{array}{ll} 
\delta_0, & 0\leq t<\frac12, \\
\frac12 \delta_1 + \frac12 \delta_2, & \frac12\leq t\leq 1
\end{array} \right.
$$
and
$$
\Phi_t(\mu_n) = \left\{ \begin{array}{ll} 
\delta_0, & 0\leq t<\frac12 - \frac{1}{2^{n+1}}, \\
\frac12 \delta_0 + \frac12 \delta_1, & \frac12 - \frac{1}{2^{n+1}}\leq
t<\frac12 +\frac{1}{2^{n+1}}, \\
\frac12 \delta_1 + \frac12 \delta_2, & \frac12 +\frac{1}{2^{n+1}}\leq t\leq 1,
\end{array} \right.
$$
$n=1,2,\ldots$, where for $x\in\R$, $\delta_x$ is the point mass at
$x$. Note that for any $n$, there is a point on the completed graph of
the element $\Phi(\mu_n)$ of $D_1(\mathcal{M}_1(\R))$ with the second
component equal to $(1/2)\delta_0 + (1/2)\delta_1$, and the 
distance from that point to the completed graph of the $\Phi(\mu)$ has
a positive lower bound that does not depend on $n$. Therefore,
$\Phi(\mu_n)$ does not converge to $\Phi(\mu)$ in
$D_1(\mathcal{M}_1(\R))$ even if the latter space is endowed with the
$M_2$-topology (see Section 11.5 in \cite {wSPL} for the definition of the $M_2$-topology).

\appendix
\section{Estimation of the term in \eqref{bprocess}}\label{ProcessApp}

In order to finish the proof of Proposition \ref{WQLSn} we need to
estimate the term in \eqref{bprocess}. In this appendix we achieve
that by proving the following lemma. 

\begin{lem}\label{smallblem}
If $\k\in[1,2)$, then  for all $0< s<\infty$ and $ \eta > 0$, 
\[
\lim_{\d\ra 0} \limsup_{n \ra \infty}  Q\left( \sup_{t\leq s} \left|
    n^{-1/\k} \sum_{i=1}^{\lfloor tn\rfloor} \left\{ \b_i \ind{\b_i \leq \d n^{1/\k}}
      - E_Q[\b_1 \ind{\b_1 \leq \d n^{1/\k} }] \right\} \right| \geq
  \eta \right) = 0.
 \]
\end{lem}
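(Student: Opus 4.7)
The plan is to adapt the argument of \cite[Lemma 5.5]{psWQLTn}, which establishes the analogous statement at a fixed time $t$, by combining it with a maximal inequality. Write $Y_i := \b_i \ind{\b_i \leq \d n^{1/\k}} - E_Q[\b_1 \ind{\b_1 \leq \d n^{1/\k}}]$ and $S_m := \sum_{i=1}^m Y_i$, so the target becomes $\lim_{\d\to 0}\limsup_n Q(\max_{m\leq \lfloor sn\rfloor}|S_m|\geq \eta n^{1/\k}) = 0$. The fundamental variance estimate, via Karamata's theorem and \eqref{btail}, is $\Var_Q(Y_1) \leq E_Q[\b_1^2 \ind{\b_1 \leq \d n^{1/\k}}] \sim \frac{\tc\k}{2-\k}\d^{2-\k} n^{2/\k-1}$, so the heuristic (independent) variance of $S_{\lfloor sn\rfloor}$ is $O(\d^{2-\k} n^{2/\k})$, which after normalization by $n^{-2/\k}$ is $O(\d^{2-\k}) \to 0$. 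The question is how to promote this heuristic to a rigorous bound on the maximum despite the lack of independence in $(\b_i)$.

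To do so I would first invoke the stationary finite-range-dependent approximation $(\b_i^{(\e)})$ from the proof of Lemma \ref{Newlim}, taking $\e = \e_n\to 0$ slowly: any choice with $\e_n^{-1/2} \gg \log n$ makes the union bound of \eqref{e:bcutoff} over the first $\lfloor sn\rfloor$ indices tend to $0$, so on a high-probability event $S_m$ can be replaced by the analogue $\tilde S_m$ built from $\b_i^{(\e_n)}$ (the discrepancy in the truncating indicator when $\b_i$ is within $e^{-\e_n^{-1/4}}$ of $\d n^{1/\k}$ has negligible expected contribution by the regularly varying tail). I then partition $\{1,\ldots,\lfloor sn\rfloor\}$ into blocks $B_1,\ldots,B_N$ of equal length $\ell_n$ with $\ell_n \gg \e_n^{-1/2}$, and write $V_k := \sum_{i\in B_k}\tilde Y_i$. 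Since non-adjacent blocks are independent, the subsequences $\{V_{2k}\}$ and $\{V_{2k+1}\}$ are each i.i.d.\ sums, to which Kolmogorov's maximal inequality applies and controls $\max_K|\sum_{k\leq K,\,k\text{ odd}} V_k|$ in terms of $\sum\Var(V_k)$; a second application of the same trick inside a typical block, plus a union bound over $N = \lfloor sn/\ell_n\rfloor$ blocks, handles the within-block oscillation.

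The main obstacle is bookkeeping the extra factor from the dependence range $r_n := \e_n^{-1/2}$: the crude bound $\Var(V_k) \leq \ell_n \cdot r_n \cdot \Var(\tilde Y_1)$ is too lossy, since after combining all the pieces it inflates the final estimate by a diverging factor of $r_n$. To close the argument one needs either an exponential decay estimate on $|\Cov_Q(\tilde Y_1,\tilde Y_{1+h})|$ for $h$ within the dependence range (which can be extracted from the geometric mixing properties of the environment under $Q$ that are implicit in the construction of $\b_i^{(\e)}$), or a careful balancing where $\e_n$ decays slowly enough that $r_n$ grows slower than any inverse power of $\d$, allowing the $\d\to 0$ limit to absorb the excess. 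With either refinement, the Kolmogorov-type bound collapses to $O(\d^{2-\k}/\eta^2)$ uniformly in $n$, and the lemma follows by first letting $n\to\infty$ and then $\d\to 0$.
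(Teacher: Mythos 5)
Your route (finite‑range approximation of $(\b_i)$, blocking, Kolmogorov's inequality on alternate blocks) is genuinely different from the paper's, but as written it has a gap that would make the first step fail. Replacing $\b_i$ by $\b_i^{(\e_n)}$ inside the \emph{sharp} indicator $\ind{\b_i\leq \d n^{1/\k}}$ is not controlled by \eqref{e:bcutoff}: the two indicators disagree whenever $\b_i$ lands within $e^{-\e_n^{-1/4}}$ of the level $\d n^{1/\k}$, and the only bound available on that event is the tail bound $Q\bigl(\b_1\in(\d n^{1/\k}-\epsilon',\,\d n^{1/\k}+\epsilon']\bigr)\leq Q(\b_1>\d n^{1/\k}-\epsilon')\sim C_0\d^{-\k}n^{-1}$ (no local/density bound on the law of $\b_1$ is known). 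Over $\lfloor sn\rfloor$ indices the expected number of disagreements is therefore of order $s\d^{-\k}$ — a constant, not $o(1)$ — and each disagreement shifts the normalized sum by roughly $\d$, for a total expected error of order $s\d^{1-\k}$, which does \emph{not} vanish as $\d\to0$ when $\k\in[1,2)$. The cure is exactly the paper's first move: write $\b_i\ind{\b_i\leq\d n^{1/\k}}=\b_i\wedge\d n^{1/\k}-\d n^{1/\k}\ind{\b_i>\d n^{1/\k}}$. The minimum is a $1$-Lipschitz function of $\b_i$, so any approximation argument is harmless there, while the indicator part becomes $\d$ times a counting process whose uniform deviations are handled by the point process convergence of Lemma \ref{Newlim} (the boundary $\{\d\}\times[0,\infty)$ is a.s.\ not charged by $N_{\l,\k}$) plus an elementary Poisson maximal bound.

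On the variance bookkeeping: of your two proposed fixes, (b) cannot work, because the limits are iterated — for each fixed $\d$ you take $\limsup_{n}$ first, and $r_n=\e_n^{-1/2}\to\infty$ beats any function of $\d$. Fix (a) is the right idea and is indeed available: the covariances $\Cov_Q(\b_1\wedge a,\b_{1+h}\wedge a)$ decay geometrically in $h$ because of the factor $\Pi_{\nu_1,\nu_h-1}$ with $E_Q[\Pi_{0,\nu-1}]<1$, and this is precisely what yields \eqref{VarSumbound}. But you still owe an argument for the within‑block maximum: "a second application of the same trick inside a block" recurses without terminating, since the bottom‑level blocks of length comparable to $r_n$ still have uncontrolled internal oscillation. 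One can rescue it by exploiting monotonicity of $j\mapsto\sum_{i\leq j}\b_i\wedge a$, which bounds the within‑block sup by the block sum plus $2\ell_n E_Q[\b_1\wedge\d n^{1/\k}]=o(n^{1/\k})$ for $\ell_n=n^{1/4}$, say. For comparison, the paper avoids blocking entirely: it applies Etemadi's decomposition over the first exceedance index $m$ and proves (Lemma \ref{QamSnSm}) that, conditionally on $\mathcal{F}_m=\mathcal{G}_{\nu_m-1}$, the increment $S_n-S_m$ has mean of order $n^{-1/\k}\log n\,(1+W_{\nu_m-1})$ and variance of order $\d^{2-\k}(1+W_{\nu_m-1}^2/n)$, the exponential tails \eqref{Wtail} of $W_{\nu_m-1}$ under $Q$ making both terms harmless. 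Your approach, once repaired as above, trades those explicit conditional estimates for genuine independence between alternate blocks; both can be made to work, but neither is complete without the missing ingredients.
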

\begin{rem}
 Lemma \ref{smallblem} is an improvement of \cite[Lemma 5.5]{psWQLTn}, 
 which stated that  
\[
 \lim_{\d \ra 0} \limsup_{n\ra \infty}  Q\left( \left| n^{-1/\k} \sum_{i=1}^{n} \left\{ \b_i \ind{\b_i \leq \d n^{1/\k}} - E_Q[\b_1 \ind{\b_1 \leq \d n^{1/\k} }] \right\} \right| \geq \eta \right), \quad \forall \eta > 0. 
\]
\end{rem}

Before giving the proof of Lemma \ref{smallblem}, we introduce new 
notation. Recall that $\rho_x =(1-\w_x)/\w_x$, and for $i\leq j$ let
\begin{equation}\label{PiRWdef}
 \Pi_{i,j} = \prod_{k=i}^j \rho_k, \quad R_{i,j} = \sum_{k=i}^j \Pi_{i,k}, \quad W_{i,j} = \sum_{k=i}^j \Pi_{k,j}, \quad W_j = \sum_{k=-\infty}^j \Pi_{k,j}. 
\end{equation}
This notation is often useful for writing certain quenched
expectations or probabilities in compact form. For instance, it is
easy to show that $E_\w^i [T_{i+1}] = 1 + 2 W_i$ (see \cite{zRWRE} for
a reference). In particular, 
\be\label{bform}
 \b_i = E_\w[ T_{\nu_i} - T_{\nu_{i-1}}] = \sum_{j=\nu_{i-1}}^{\nu_i-1}
 E_\w^j[T_{j+1}] = \nu_i-\nu_{i-1} + 2 \sum_{j=\nu_{i-1}}^{\nu_i-1}
 W_j. 
\ee


It will be important for us to be able to control the tails, under the
measure $Q$, of the random variables of the type $W_{\nu_m -1}$. Since
under $Q$ the 
environment is stationary under shifts by the ladder locations, these
random variables 
all have the same distribution under $Q$ as $W_{-1}$. Further, it was shown
in \cite[Lemma 2.2]{pzSL1} that $W_{-1}$ has,  under
$Q$, exponential tails. That is, there exist constants $C_1,C_2>0$
such that  for any $x>0$, 
\be\label{Wtail}
 Q( W_{-1} > x ) \leq C_1 e^{-C_2 x}. 
\ee

We now proceed to prove Lemma \ref{smallblem}. 

\begin{proof}[Proof of Lemma \ref{smallblem}]
First, note that $\b_i \ind{\b_i \leq \d n^{1/\k}} = \b_i \wedge \d
n^{1/\k} - \d n^{1/\k} \ind{ \b_i > \d n^{1/\k}}$.   Thus, 
\begin{align}
 & Q\left( \sup_{t\leq s} \left| n^{-1/\k} \sum_{i=1}^{\lfloor tn\rfloor} \left\{ \b_i \ind{ \b_i \leq \d n^{1/\k}} - E_Q[\b_1 \ind{\b_1 \leq \d n^{1/\k}}] \right\} \right| \geq \eta \right) \nonumber \\
&\leq Q\left( \sup_{t\leq s} \left| n^{-1/\k} \sum_{i=1}^{\lfloor tn\rfloor} \left\{ \b_i \wedge \d n^{1/\k} - E_Q[\b_i \wedge \d n^{1/\k}] \right\} \right| \geq \eta/2 \right) \label{cutoff} \\
&\quad + Q\left( \d \sup_{t\leq s} \left| \sum_{i=1}^{\lfloor tn\rfloor} \ind{ \b_i > \d n^{1/\k}} - \fl{tn} Q(\b_i > \d n^{1/\k})  \right| \geq \eta/2 \right). \label{count}
\end{align}
Note that \eqref{btail} implies that $\fl{tn} Q(\b_i > \d n^{1/\k})
\ra t C_0 \d^{-\k}$ and, moreover,  that the convergence is uniform in $t \in [0,s]$. 
Therefore, to bound the term in \eqref{count} it is enough to show
that for all $0< s<\infty$ and $ \eta > 0$, 
\be\label{count2}
 \lim_{\d\ra 0} \limsup_{n\ra\infty} Q\left( \d \sup_{t\leq s} \left|
     \sum_{i=1}^{\lfloor tn\rfloor} \ind{ \b_i > \d n^{1/\k}} - t C_0
     \d^{-\k}  \right| \geq \eta \right) = 0.  
\ee
Now, for any $\d>0$ let $G_\d: \mathcal{M}_p((0,\infty]\times
[0,\infty)) \ra D_\infty^+$ (we equip the latter space with the $J_1$
topology) be defined by  
\[
 G_\d(\zeta)(t) = \zeta((\d,\infty] \times [0,t]), \ t\geq 0.
\]
Then $\sum_{i=1}^{\lfloor tn\rfloor} \ind{ \b_i > \d n^{1/\k}} =
G_\d(N_{1/n})(t)$. It is easy to see that $G_\d$ is continuous
on the set of point processes with no atoms on the line
$\{\d\} \times [0,\infty)$. Since $N_{\l,\k}$ belongs to this set
with probability 1, and $N_{1/n} \overset{Q}{\Lra} N_{\l,\k}$, the
continuous mapping theorem implies that $G_\d(N_{1/n})
\overset{Q}{\Lra} G_\d(N_{\l,\k})$. Furthermore, the supremum over a
compact interval is a continuous mapping from $D_\infty^+$ equipped
with the $J_1$ topology to the real line.  Therefore,
\begin{align*}
 \limsup_{n\ra\infty} Q\left( \d \sup_{t\leq s} \left| \sum_{i=1}^{tn} \ind{ \b_i > \d n^{1/\k}} - t C_0 \d^{-\k}  \right| \geq \eta \right) 
\leq Q\left( \d \sup_{t\leq s} \left| G_\d(N_{\l,\k})(t) - t C_0 \d^{-\k} \right| \geq \eta \right).
\end{align*}
Note that $G_\d(N_{\l,\k})$ is a homogeneous one-dimensional Poisson
process with rate $\l/\k \d^{-\k} = C_0 \d^{-\k}$. Therefore, using
once again the $L^p$-maximum inequality for
martingales, we have 
\[
 Q\left( \d \sup_{t\leq s} \left| G_\d(N_{\l,\k})(t) - t C_0 \d^{-\k} \right| \geq \eta \right) 
\leq \frac{\d^2}{\eta^2} \Var_Q\left( G_\d(N_{\l,\k})(s) \right) = \frac{ \l s \d^{2-\k}}{\eta^2 \k}. 
\]
Since $\k<2$ this last term vanishes as $\d\ra 0$ for any $\eta>0$ and
$s<\infty$. This completes the proof of \eqref{count2} and, therefore,
it only remains to estimate the term in  \eqref{cutoff}. 

To this end, we assume for (notational) simplicity that $s=1$, in
which case our task reduces to showing that for any $\eta>0$, 
\be \label{e:max.k}
 \lim_{\d\ra 0} \limsup_{n\ra\infty} Q\left( \max_{k\leq n} \left|
     n^{-1/\k} \sum_{i=1}^k \left\{ \b_i \wedge \d n^{1/\k} - E_Q[
       \b_1 \wedge \d n^{1/\k} ] \right\} \right| >  \eta \right) = 0.  
\ee
For a fixed $n$ and $\d \in (0,1]$ denote 
\be\label{Smdef}
 S_j = n^{-1/\k} \sum_{i=1}^j \left\{ \b_i \wedge \d n^{1/\k} - E_Q[
   \b_1 \wedge \d n^{1/\k} ] \right\}, \quad  j=1,\ldots, n. 
\ee
For $\eta>0$ let $A_m = \{ \max_{j<m} | S_j | \leq  \eta  < |S_m| \}$. 
Then, 
\begin{align}
& Q\left( \max_{k\leq n} \left| n^{-1/\k} \sum_{i=1}^k \left\{ \b_i \wedge \d n^{1/\k} - E_Q[ \b_1 \wedge \d n^{1/\k} ] \right\} \right| > \eta \right)
 = \sum_{m=1}^n Q( A_m )  \nonumber \\ 
&\qquad \leq Q\left( |S_n| \geq  \eta/2 \right) + \sum_{m=1}^{n-1} Q\left( A_m \cap \{ | S_n | \leq \eta/2 \} \right) \nonumber \\
&\qquad \leq Q\left( |S_n| \geq \eta/2 \right) + \sum_{m=1}^{n-1}
Q\left( A_m \cap \{ | S_n - S_m | > \eta/2 \} \right).  \label{Etemadi1}
\end{align}
It was shown in the proof of Lemma 5.5 in \cite{psWQLTn} that for some
constant $C$, 
\be\label{VarSumbound}
 \Var_Q\left( \sum_{i=1}^n \b_i \wedge \d n^{1/\k} \right) \leq C \d^{2-\k} n^{2/\k}. 
\ee
By Markov's inequality this shows that the term $Q\left( |S_n| \geq
  \eta/2 \right)$ does not contribute to the limit in
\eqref{e:max.k}. Therefore, it remains only to bound the sum on the
right in \eqref{Etemadi1}.  
If the $\b_i$ were independent, then the general term in this sum
would be equal to $Q(A_m)Q(|S_n-S_m|> \eta/2)$ and the sum could be
handled in the same way as the term $Q\left( |S_n| \geq
  \eta/2 \right)$ above. While the $\b_i$ are not independent under $Q$, they have good
mixing properties and the following lemma gives an upper bound on the
general term in the sum, not far off from what it would be if  the
$\b_i$ were independent.   
\begin{lem}\label{QamSnSm}
There are constants $C,C'>0$ such that for any 
$n=1,2,\ldots$, $\d \in (0,1]$, $m=1,\ldots, n$ and $\eta>0$, 
\[
 Q( A_m \cap \{ |S_n - S_m| > \eta \} )
\leq C e^{-C' \eta n^{1/\k}/\log n} + \frac{1}{\eta^2} C \d^{2-\k}
\left( Q(A_m) + 1/n \right). 
\]
\end{lem}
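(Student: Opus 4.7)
The plan is to decouple the event $A_m$ from $\{|S_n-S_m|>\eta\}$ by replacing each $\b_i$ with an i.i.d.\ surrogate depending only on the $i$-th inter-ladder block $\mathcal{L}_i=\{\w_x:\nu_{i-1}\leq x<\nu_i\}$. The obstacle is that $\b_i$ depends on the entire environment to the left of $\nu_{i-1}$ through $W_{\nu_{i-1}-1}$. Starting from \eqref{bform} and expanding $W_j = \Pi_{\nu_{i-1},j}\,W_{\nu_{i-1}-1}+W_{\nu_{i-1},j}$ for $j\in[\nu_{i-1},\nu_i-1]$, I obtain the decomposition
\[
\b_i = \tilde\b_i + 2\,W_{\nu_{i-1}-1}\,R_i,
\qquad
\tilde\b_i = (\nu_i-\nu_{i-1}) + 2\sum_{j=\nu_{i-1}}^{\nu_i-1} W_{\nu_{i-1},j},
\qquad
R_i = \sum_{j=\nu_{i-1}}^{\nu_i-1} \Pi_{\nu_{i-1},j},
\]
in which $\tilde\b_i$ and $R_i$ depend only on $\mathcal{L}_i$. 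Under $Q$ the blocks $\{\mathcal{L}_i\}_{i\geq 1}$ are i.i.d.\ and independent of the environment to the left of $0$, so $\{\tilde\b_i\}$ and $\{R_i\}$ are i.i.d.\ sequences, $\tilde\b_1\leq\b_1$ inherits the tail \eqref{btail}, and $W_{\nu_{i-1}-1}$ is independent of $R_i$ for each $i$.

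Let $\tilde S_j$ be the analogue of $S_j$ in \eqref{Smdef} with $\b_i$ replaced by $\tilde\b_i$. Since $A_m$ is measurable with respect to the environment in $[-\infty,\nu_m-1]$ while $\tilde S_n-\tilde S_m$ depends only on $\mathcal{L}_{m+1},\ldots,\mathcal{L}_n$, these are $Q$-independent, and Chebyshev's inequality combined with a variance bound analogous to \eqref{VarSumbound} applied to the $\tilde\b_i$ yields
\[
Q\bigl(A_m\cap\{|\tilde S_n-\tilde S_m|>\eta/2\}\bigr)\leq Q(A_m)\cdot\frac{4C\d^{2-\k}}{\eta^2},
\]
which produces the $Q(A_m)$ contribution. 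To deal with the error $\Delta := |(S_n-\tilde S_n)-(S_m-\tilde S_m)|$, introduce the good event $G=\{\max_{1\leq i\leq n}W_{\nu_{i-1}-1}\leq M\}$ with $M=c\,\eta\,n^{1/\k}/\log n$ for a small constant $c$. By stationarity of the sequence $\{W_{\nu_{i-1}-1}\}_{i\geq 1}$ under $Q$, the exponential tail bound \eqref{Wtail}, and a union bound,
\[
Q(G^c)\leq n C_1 e^{-C_2 M}\leq C\, e^{-C'\eta n^{1/\k}/\log n},
\]
since $\log n = o(\eta n^{1/\k}/\log n)$ for $\k<2$; this supplies the exponential term in the lemma.

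It then remains to prove $Q(\Delta>\eta/2,\,G)\leq C\d^{2-\k}/(\eta^2 n)$, yielding the $1/n$ term. With the filtration $\mathcal{F}_i=\sigma(\mathcal{L}_0,\mathcal{L}_1,\ldots,\mathcal{L}_i)$ and $Y_i=(\b_i\wedge\d n^{1/\k})-(\tilde\b_i\wedge\d n^{1/\k})$, I would decompose $\sum_{i=m+1}^n(Y_i-E_Q Y_i)$ into a martingale difference sum $\sum(Y_i-E_Q[Y_i\mid\mathcal{F}_{i-1}])$ and a compensator $\sum(E_Q[Y_i\mid\mathcal{F}_{i-1}]-E_Q Y_i)$. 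The martingale part is handled on $G$ via the $L^2$ maximal inequality, using the conditional independence of $R_i$ from $\mathcal{F}_{i-1}$ and the bound $|Y_i|\leq (2MR_i)\wedge \d n^{1/\k}$ valid on $G$. The compensator is a Lipschitz functional of the stationary sequence $\{W_{\nu_{i-1}-1}\}$, which is geometrically mixing under $Q$ because Assumption~\ref{tasm} forces $E_Q[\Pi_{\nu_{i-1},\nu_{j-1}-1}]$ to decay exponentially in $j-i$, so its fluctuations can be estimated by summing covariances. The main obstacle is precisely this last balance: the truncation scale $M=c\,\eta n^{1/\k}/\log n$ must simultaneously be large enough that $Q(G^c)$ is swallowed by the first term of the lemma, yet small enough that the truncated second moments of $2W_{\nu_{i-1}-1}R_i$ -- which combine an exponentially light factor (from the tails of $W_{-1}$) with the $\k$-heavy factor $R_i$ -- can be summed against the geometric covariance decay of the compensator to reach the target rate $C\d^{2-\k}/(\eta^2 n)$.
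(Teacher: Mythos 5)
There is a genuine gap, and it is located exactly where you flag the "main obstacle": the target bound $Q(\Delta>\eta/2,\,G)\leq C\d^{2-\k}/(\eta^2 n)$ is false, because in your decomposition the "error" is not a small correction. The stripped-off term $2W_{\nu_{i-1}-1}R_i$ is of the same stochastic order as $\b_i$ itself: $R_i=R_{\nu_{i-1},\nu_i-1}$ has a $\k$-heavy tail comparable to that of $\b_i$, and $W_{\nu_{i-1}-1}$ is an $O(1)$ variable with exponential tails that is \emph{independent} of $R_i$. Consequently, with $Y_i=(\b_i\wedge\d n^{1/\k})-(\tilde\b_i\wedge\d n^{1/\k})$ one has $E_Q[Y_i^2]\leq 4E_Q[W_{-1}^2]\,E_Q[R_1^2\ind{R_1\leq\d n^{1/\k}}]\asymp\d^{2-\k}n^{2/\k-1}$, and this order is attained. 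Summing $n-m\asymp n$ (conditionally orthogonal) martingale differences and normalizing by $n^{-1/\k}$ gives a variance of order $\d^{2-\k}$, not $\d^{2-\k}/n$; indeed $\Delta$ is a normalized sum of $\asymp n$ truncated heavy-tailed terms, so $Q(\Delta>\eta/2)$ does not even tend to $0$ as $n\to\infty$ for fixed $\d,\eta$. Nor can you fall back on the bound $C\d^{2-\k}/\eta^2$ without the factor $Q(A_m)$ or $1/n$: in \eqref{Etemadi1} this bound is summed over $m=1,\ldots,n-1$, so an $m$-independent $O(\d^{2-\k})$ contribution is fatal. Absorbing $\Delta$ into the $Q(A_m)$ term is also not automatic, since $\Delta$ depends on $W_{\nu_m-1},\ldots,W_{\nu_{n-1}-1}$ and hence on the environment generating $A_m$. (A secondary, more repairable issue: your union bound gives $Q(G^c)\leq nC_1e^{-C_2M}$, and the prefactor $n$ is not dominated by $Ce^{-C'\eta n^{1/\k}/\log n}$ uniformly in $\eta$; the paper needs to control only the single variable $W_{\nu_m-1}$ and so has no such prefactor.)

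The paper's decomposition is crucially different: it conditions only on $\mathcal{F}_m=\mathcal{G}_{\nu_m-1}$ and writes, for $j>m$, $\b_j=\b_{m,j}+2W_{\nu_m-1}\Pi_{\nu_m,\nu_{j-1}-1}R_{\nu_{j-1},\nu_j-1}$ with $\b_{m,j}$ independent of $\mathcal{F}_m$. The point is the factor $\Pi_{\nu_m,\nu_{j-1}-1}$, whose expectation is $r^{j-m-1}$ with $r=E_Q[\Pi_{0,\nu-1}]<1$: the influence of the conditioning on $\b_j$ decays geometrically in $j-m$, so the total conditional bias of $S_n-S_m$ is only $O\bigl(n^{-1/\k}\log n\,(1+W_{\nu_m-1})\bigr)$ and the conditional variance is $C\d^{2-\k}(1+W_{\nu_m-1}^2/n)$. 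The exponential term in the lemma then comes from the tail \eqref{Wtail} of the single variable $W_{\nu_m-1}$, and the $1/n$ term from $E_Q[W_{-1}^2]/n$, while the factor $Q(A_m)$ comes from $\ind{A_m}$ multiplying the $\mathcal{F}_m$-measurable conditional bounds. If you want to keep a surrogate-variable formulation, the surrogate must replace only $W_{\nu_m-1}$ (by an independent copy $\tilde W$), not each $W_{\nu_{i-1}-1}$; replacing the latter removes a contribution of leading order.
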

Assuming the statement of Lemma \ref{QamSnSm}, the proof of Lemma
\ref{smallblem} can be completed by writing (changing the constants as
necessary) 
 \begin{align*}
 \sum_{m=1}^{n-1} Q( A_m \cap \{ |S_n - S_m| > \eta/2 \} ) 
& \leq \sum_{m=1}^{n-1} \left\{ C e^{-C' \eta n^{1/\k}/\log n} + \frac{1}{\eta^2} C \d^{2-\k} \left( Q(A_m) + 1/n \right) \right\} \\
& \leq C n e^{-C' \eta n^{1/\k}/\log n} + \frac{C \d^{\k-2} }{\eta^2}.
\end{align*}
Both terms vanish under the limits in \eqref{e:max.k}, so we only need
to prove Lemma \ref{QamSnSm}. 
\end{proof}

\begin{proof}[Proof of Lemma \ref{QamSnSm}]
Define a (discrete time) filtration on $\Omega = [0,1]^\Z$ by
$\mathcal{G}_n= \s( \w_i: \, i \leq n)$, $n=0,1,2,\ldots$. Then for
each $m=0,1,2,\ldots$, $\nu_m-1$ is a stopping time with respect to that
filtration, and we denote $\mathcal{F}_m =\mathcal{G}_{ \nu_m-1}$,
$m=1,2,\ldots$. Since each $\b_j$ with $j\leq m$ is
$\mathcal{F}_m$-measurable, so is each $S_j$ with $j\leq
m$. Therefore, 
\be\label{CondFm}
 Q\left( A_m \cap \{ | S_n - S_m | > \eta \} \right) = E_Q\left[
   \ind{A_m} Q\left( |S_n - S_m| > \eta \, \bigl| \, \mathcal{F}_m
   \right) \right] . 
\ee
Conditioned on $\mathcal{F}_m$, the difference $S_n-S_m$  no longer
has zero mean, but we will show that the conditional mean is typically
small. We begin by comparing the conditional and unconditional means
of $\b_j \wedge \d n^{1/\k}$. To this end we make explicit the
dependence of $\b_j$ on $\mathcal{F}_m$.  
Recall the definitions of $\Pi_{i,j}$, $W_{i,j}$ and $R_{i,j}$ in
\eqref{PiRWdef} and note that $W_i = W_{k,i} + \Pi_{k,i} W_{k-1}$ for
any $k\leq i$.  Therefore, for any $1\leq m<j$ we can rewrite\eqref{bform} as
\begin{align*}
 \b_j &= \nu_{j} - \nu_{j-1} + 2 \sum_{i=\nu_{j-1}}^{\nu_{j}-1} \left( W_{\nu_m,i} +  W_{\nu_m - 1} \Pi_{\nu_m,i} \right) \\
&= \nu_{j} - \nu_{j-1} + 2 \sum_{i=\nu_{j-1}}^{\nu_{j}-1} W_{\nu_m,i} + 2 W_{\nu_m - 1} \Pi_{\nu_m, \nu_{j-1}-1} R_{\nu_{j-1}, \nu_j -1} \\
&=: \b_{m,j} + 2 W_{\nu_m - 1} \Pi_{\nu_m, \nu_{j-1}-1} R_{\nu_{j-1}, \nu_j -1}.
\end{align*}
Note that $\b_{m,j}$ is independent of $\mathcal{F}_m$. We enlarge, if
necessary, the probability space to define a random variable 
$\tilde{W}$ with the same distribution as $W_{\nu_m-1}$ and
independent of all $(\omega_x)$; in particular, $\tilde{W}$ is
independent of $\mathcal{F}_m$. Denote 
$\tilde\b_j = \b_{m,j} + 2 \tilde{W}\Pi_{\nu_m, \nu_{j-1}-1}
R_{\nu_{j-1}, \nu_j -1}$, so that   
\[
 E_Q\left[ \b_j \wedge \d n^{1/\k}  \, \bigl| \, \mathcal{F}_m \right] - E_Q[\b_j \wedge \d n^{1/\k} ]
=   E_Q\left[  \b_j \wedge \d n^{1/\k} - \tilde\b_j \wedge \d n^{1/\k}
  \, \bigl| \, \mathcal{F}_m \right]. 
\]
Observe that $R_{\nu_{j-1},\nu_j -1} \leq \b_{m,j} \leq
\min(\b_j,\tilde{\b}_j)$. Thus, if $R_{\nu_{j-1},\nu_j - 1} \geq \d
n^{1/\k}$, then both $\b_j$ and $\tilde\b_j$ are larger than $\d
n^{1/\k}$ as well. This implies that 
\begin{align}
&\left| E_Q\left[ \b_j \wedge \d n^{1/\k}  \, \bigl| \, \mathcal{F}_m \right] - E_Q[\b_j \wedge \d n^{1/\k} ] \right| \\
&\quad \leq E_Q\left[ | \b_j - \tilde\b_j | \ind{R_{\nu_{j-1}, \nu_j -1} \leq \d n^{1/\k}} \, \bigl| \, \mathcal{F}_m \right] \nonumber \\
&\quad = E_Q\left[ 2 \Pi_{\nu_m, \nu_{j-1}-1} R_{\nu_{j-1}, \nu_j -1} |W_{\nu_m-1} - \tilde{W}| \ind{R_{\nu_{j-1}, \nu_j -1} \leq \d n^{1/\k}} \, \bigl|  \, \mathcal{F}_m \right] \nonumber \\
&\quad \leq  2 \bigl( E_Q[\Pi_{0,\nu-1}]\bigr)^{j-m-1}
E_Q[R_{0,\nu-1}\ind{R_{0,\nu -1} \leq \d n^{1/\k}}] \left(
  E_Q[\tilde{W}] + W_{\nu_m-1} \right), \nonumber 
\end{align}
where in the last inequality we used the fact that the blocks of
environment between ladder locations are i.i.d.\ under the measure
$Q$. 
Since $R_{0,\nu-1} \leq \b_1$, there exists a constant $C$ so that
$Q(R_{0,\nu-1} > x) \leq C x^{-\k}$. This implies that $E_Q[
R_{0,\nu-1}  \ind{R_{0,\nu-1} \leq \d n^{1/\k}} ] \leq E_Q[
R_{0,\nu-1} ] < \infty$ when $\k > 1$ and $E_Q[R_{0,\nu-1}
\ind{R_{0,\nu-1} \leq \d n^{1/\k}} ] \leq C \log n$ when
$\k=1$ for some other $C$. Thus, we can always bound this expectation
by $C \log n$ for 
some $C>0$. Also, the definition of $\nu$ implies that $r:= E_Q[\Pi_{0,\nu-1}]
< 1$ and \eqref{Wtail} implies that $E_Q[\tilde{W}] = E_Q[W_{-1}] <
\infty$. Thus, there exists a constant $C$ so that 
\[
 \left| E_Q\left[ \b_j \wedge \d n^{1/\k}  \, \bigl| \, \mathcal{F}_m \right] - E_Q[\b_j \wedge \d n^{1/\k} ] \right|
\leq C  \log n \, r^{j-m-1} \left( 1 + W_{\nu_m-1} \right), 
\]
implying that
\begin{align*}
 \left| E_Q[S_n - S_m \, | \, \mathcal{F}_m ] \right| 
&\leq n^{-1/\k} \sum_{j=m+1}^n \left| E_Q\left[ \b_j \wedge \d n^{1/\k}  \, \bigl| \, \mathcal{F}_m \right] - E_Q[\b_j \wedge \d n^{1/\k} ] \right| \\
&\leq C n^{-1/\k} \log n \left( 1 + W_{\nu_m-1} \right) .
\end{align*}

Applying Chebyshev's inequality conditionally, we obtain 
\begin{align}
& Q\left(| S_n - S_m| > \eta \, | \, \mathcal{F}_m \right) \nonumber \\
&\qquad \leq \indd{ \left| E_Q[S_n - S_m \, | \, \mathcal{F}_m ] \right| > \eta/2 } + Q\left(| S_n - S_m - E_Q[S_n - S_m \, | \, \mathcal{F}_m ] | > \eta/2 \right) \nonumber \\
&\qquad \leq \indd{ 1+W_{\nu_m-1}  > n^{1/\k} \eta/( 2C \log n) } +
\frac{4}{\eta^2}\Var_Q\left( S_n - S_m \, | \, \mathcal{F}_m \right).  \label{conCheby}
\end{align}
To handle the conditional variance in \eqref{conCheby} we write 
\begin{align}
 \Var_Q\left( S_n - S_m \, | \, \mathcal{F}_m \right) 
& = n^{-2/\k} \sum_{j=m+1}^n \Var_Q\left( \b_j \wedge \d n^{1/\k} \, | \, \mathcal{F}_m \right) \nonumber \\
&\qquad + 2n^{-2/\k} \sum_{m<j<k\leq n} \Cov_Q\left( \b_j \wedge \d n^{1/\k} , \, \b_k \wedge \d n^{1/\k} \, | \, \mathcal{F}_m \right). \label{condVarExpand}
\end{align}
Upper bounds on the conditional variance and conditional covariance
terms above can be obtained in a similar manner to the proof of (49)
in \cite{psWQLTn}. One adapts this approach to take into account the
conditioning on $\mathcal{F}_m$, by replacing $\b_j$ by $\b_{m,j}$,  and
then controlling the difference between the two similarly to what was
done above when bounding  $E[S_n-S_m \,|\, \mathcal{F}_m]$. Doing
this we obtain that 
there exist constants $C>0$ and $r\in(0,1)$ such that 
\[
 \Var_Q\left( \b_j \wedge \d n^{1/\k} \, | \, \mathcal{F}_m \right) \leq C \d^{2-\k} n^{2/\k-1}\left( 1 + r^{j-m-1} W_{\nu_m-1}^2 \right)
\]
and 
\[
 \Cov_Q\left( \b_j \wedge \d n^{1/\k} , \, \b_k \wedge \d n^{1/\k} \, | \, \mathcal{F}_m \right) \leq C \d^{2-\k} n^{2/\k-1} \left( 1 + r^{j-m-1} W_{\nu_m-1}^2 \right) \sqrt{ r^{k-j-1} }. 
\]
Using these bounds in  \eqref{condVarExpand},  we see that for some $C>0$, 
\be\label{condVarBound}
 \Var_Q( S_n - S_m \, | \, \mathcal{F}_m ) \leq C \d^{2-\k}\left( 1 + \frac{W_{\nu_m-1}^2}{n} \right).
\ee

Combining \eqref{CondFm}, \eqref{conCheby}, and \eqref{condVarBound} we obtain
\begin{align*}
& Q( A_m \cap \{ |S_n - S_m| > \eta \} )\\
&\quad \leq Q( C'(1 + W_{\nu_m-1})  > \eta n^{1/\k}/\log n  ) + \frac{1}{\eta^2} C \d^{2-\k} E_Q\left[ \ind{A_m} \left( 1 + W_{\nu_m-1}^2/n \right) \right] \\
&\quad \leq C e^{-C' \eta n^{1/\k}/\log n} + \frac{1}{\eta^2} C \d^{2-\k} \left( Q(A_m) + 1/n \right)
\end{align*}
where the constants $C,C'$ may change from line to line (in the last
inequality we used \eqref{Wtail} and the fact that $W_{\nu_m-1}$ has
the same distribution as $W_{-1}$ under $Q$). This gives us the
statement of the lemma.   
\end{proof}

\textbf{Acknowledgment:} We would like to thank Olivier Zindy for pointing out that the convergence in distribution in Corollary \ref{AveragedXn} could be improved to the uniform topology. 

\bibliographystyle{alpha}
\bibliography{RWRE}

\end{document}